\newtheorem{theorem}{Theorem}[section] 
\newtheorem{lemma}[theorem]{Lemma}     
\newtheorem{remark}{Remark}
\numberwithin{equation}{section}
\newcommand{\eqskip}{ \vspace*{2mm}\\ }
\def\tht{\theta}
\def\Om{\Omega}
\def\om{\omega}
\def\e{\varepsilon}
\def\g{\gamma}
\def\l{\lambda}
\def\p{\partial}
\def\D{\Delta}
\def\bs{\backslash}
\def\a{\alpha}
\def\b{\beta}
\def\d{\delta}
\def\L{\Lambda}
\def\z{\zeta}
\def\vp{\varphi}
\def\Odr{\mathcal{O}}
\def\H{W_2}
\def\jp{{\kappa'}}
\def\di{\,d}
\def\RR{\mathds{R}}
\def\CC{\mathds{C}}
\def\PS{\mathcal{I}}
\def\He{\mathcal{H}_\e}
\def\he{\mathfrak{h}_\e}
\def\Ho{\mathcal{H}_0}
\def\ho{\mathfrak{h}_0}
\def\Hd{-\D_\om^{(D)}}
\def\Hn{-\D_\om^{(N)}}
\def\bs#1{\boldsymbol{#1}}
\newcommand{\ds}{\displaystyle}
 \DeclareMathOperator{\RE}{Re}
\DeclareMathOperator{\IM}{Im} 
\DeclareMathOperator{\discspec}{\sigma_{d}}
\DeclareMathOperator{\dist}{dist} 
\DeclareMathOperator{\Dom}{\mathcal{D}}
\DeclareMathOperator{\Tr}{Tr} 
\DeclareMathOperator{\Det}{det} \DeclareMathOperator{\Div}{div}
\DeclareMathOperator{\hs}{\mathcal S}
\newcounter{assumption}
\title[Deformations of compact flat hypersurfaces]{On the spectrum of deformations of
compact double-sided flat hypersurfaces}
\date{}
\author{Denis Borisov \and Pedro Freitas}
\address{
Department of Physics and Mathematics, Bashkir State Pedagogical
University, October rev. st., 3a, 450000, Ufa, Russia
}\email{borisovdi@yandex.ru}
\address{Department of Mathematics,
Faculdade de Motricidade Humana (TU Lisbon) {\rm and} Group of
Mathematical Physics of the University of Lisbon\\ Complexo
Interdisciplinar, Av.~Prof.~Gama Pinto~2\\ P-1649-003 Lisboa,
Portugal}\email{freitas@cii.fc.ul.pt}
\subjclass[2000]{35P15 (primary), 35J05 (secondary)}
\begin{document}
\maketitle

\begin{abstract}
We study the asymptotic behaviour of the eigenvalues of the
Laplace--Beltrami operator on a compact hypersurface in $\RR^{n+1}$
as it is {\it flattened} into a singular double--sided flat hypersurface.
We show that the limit spectral problem corresponds to the Dirichlet and Neumann
problems on one side of this flat (Euclidean) limit, and derive an explicit
three-term asymptotic expansion for the eigenvalues where the remaining two terms
are of orders $\e^2\log\e$ and $\e^2$.
\end{abstract}

\section{Introduction}
In recent years there have been several papers studying the effect that flattening a domain
has on the eigenvalues of the Laplace operator~\cite{bc,bofr1,bofr2,frso}; see also the
books~\cite{Na,Pa} and the references therein for similar problems with boundary
conditions other than Dirichlet. In these papers the main objective
has been the derivation of the asymptotics of these eigenvalues in terms
of a scalar parameter measuring how thin the domain becomes in one direction, as this
parameter approaches zero. As far as we are aware, almost if not all such existing
examples in the literature are concerned with domains in Euclidean space where
the limiting problem degenerates to a domain of zero measure and therefore eigenvalues
approach infinity.

A slightly different set of problems which has been considered consists
of domains which are perturbations of singular sets such as thin tubular neighbourhoods of
graphs, i.e., domains which locally are like thin tubes -- see~\cite{EP1,EP3}, for instance,
and also~\cite{Gr2} for a review. As in the papers cited above, again the limiting domains
have zero measure and the spectrum behaves in quite a different way from
the model considered here.

In this paper we study a situation which, although different from that described in
the first paragraph, has in common with it the process by which the limiting domain is
approached. More precisely, consider the case of a given domain $\Omega$
in $\RR^{n+1}$ satisfying certain restrictions which for the purpose here may be stated
roughly as being bounded from above and below by the graphs of two functions -- see
Section~\ref{formulation} for a precise formulation. The domain $\Omega$ is then flattened
towards a domain $\omega$ in $\RR^{n}$ via a (continuous) one-parameter family of domains
$\Omega_{\e}$. These domains are
obtained as the functions mentioned above are
multiplied by the parameter $\e$.
The problem that shall concern us here is the study of the evolution of the eigenvalues of
the Laplace-Beltrami operator on the one-parameter family of compact hypersurfaces $\hs_{\e}$
which are the boundaries of the domains $\Omega_{\e}$ described above, as $\e$ approaches
zero. One of the differences in this instance is that while the domain $\Omega_{0}$ has zero
$(n+1)-$measure as stated above, $\hs_{0}$ retains positive $n-$measure, developing instead
a singularity on the boundary of the domain $\omega$ (when considered as a domain in $\RR^{n}$).
We thus expect these eigenvalues to remain finite as the parameter $\e$ approaches zero,
and to converge to a limiting spectral problem on the double--sided flat hypersurface. This
is indeed the case, and the relevant spectral problems turn out to be the Dirichlet
and Neumann problems on the domain $\omega$, with the two next asymptotic terms after that
being of orders $\e^{2}\log\e$ and $\e^{2}$. These results have been announced in~\cite{bofr3}.

\begin{figure}
\begin{center}
\includegraphics[scale=0.6]{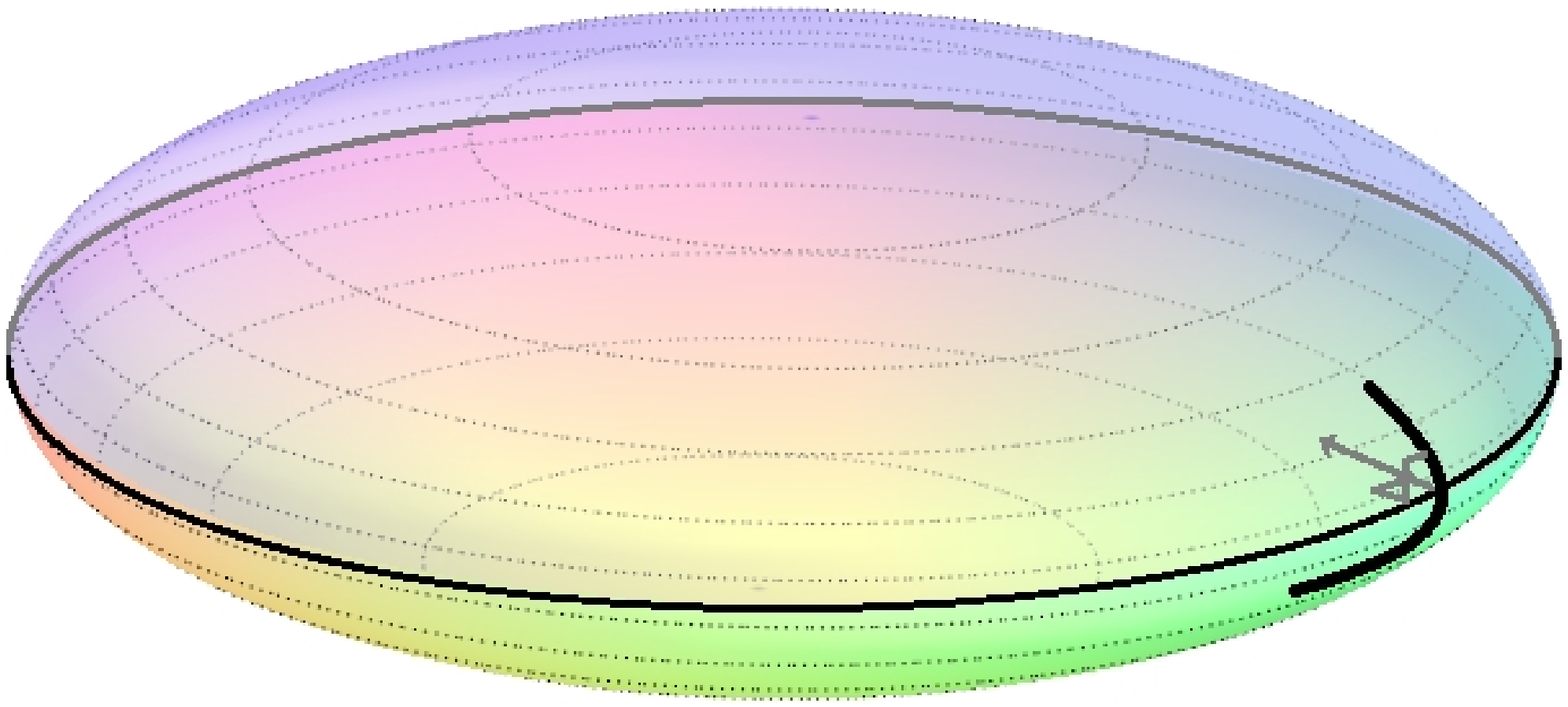}
\caption{Surface $\hs_\e$ with a cross-section at the edge}\label{crosssect}
\end{center}
\end{figure}

In order to understand the origin of the $\e^2\log\e$ term in the expansion, it turns
out that it is sufficient to consider the case where $n$ equals one, that is when the boundary
is basically $S^{1}$. Because of this, it is not necessary to take into consideration the geometric
intrincacies of the problem which appear in higher dimensions and it is possible to obtain
the full description of eigenvalues in terms of elliptic integrals.

More precisely, for an ellipse of radii $1$ and $\e$ we have that the eigenvalues are
given by
\[
\lambda_{k}(\e) = \frac{\ds k^2 \pi^2}{\ds E^2(1-\e^2)},
\]
for $k\in\mathbb{Z}$ and where
\[
E(m) = {\ds\int}_{0}^{\pi/2} \sqrt{1-m \sin^2(\theta)}\;{\rm d}\theta
\]
is the complete elliptic integral of the second type yielding one quarter of
the perimeter of the ellipse for $m=1-\e^2$.

Combining the above with the asymptotic expansion for $E$ yields
\[
\lambda_{k}(\e) = \frac{\ds k^2 \pi^2}{4} + \frac{\ds k^2 \pi^2}{\ds 4}\e^2 \log\e+
\frac{\ds k^2 \pi^2}{\ds 2}\left(\frac{\ds 1}{\ds 4}-\log 2\right)\e^2+\Odr(\e^{2+\rho}),
\;\;\rho\in(0,1).
\]

In some sense, the purpose of the analysis that we shall carry out in what follows
is to show that the above result may actually be extended to higher dimensions.
It should be noted here that this expansion depends on the relation between the different variables
at the endpoints of the segment, which in this case is of the form $x_{1}^{2}+\e^2 x_{2}^2=1$.
Clearly different relations between the leading powers will lead to different expansions.

More generally, the issue is that the points of the boundary of $\Omega$ where there is a
tangent in the direction along which the domain is being flattened will play a special role.
Throughout the paper we assume this set of points to be contained in a hyperplane
orthogonal to the scaling direction, and that this tangency is simple. In the vicinity of these points we take the cross-section of our surface as indicated in Fig.~\ref{crosssect} which,
with the assumptions made, will be similar to the one-dimensional ellipse described above.
Our results then state that in the higher-dimensional case the asymptotics for the eigenvalues still
behave in a similar fashion and thus the logarithmic terms appearing above persist in this more
general setting.

Apart from the intrinsic interest of the behaviour of the spectrum close to double--sided
flat domains, we point out that such manifolds have appeared in the
literature in connection with eigenvalues as maximizers of the invariant eigenvalues among
all surfaces isometric to surfaces of revolution in $\RR^{3}$~\cite{abfr} and for hypersurfaces
of revolution diffeomorphic to a sphere and isometrically embedded in $\RR^{n+1}$~\cite{cde}.
In fact, it is shown in those papers that these optimal singular {\it double flat disks}
maximize the whole invariant spectrum and not just a specific eigenvalue.
Another source of interest for such asymptotic expansions lies with the fact that, in some
cases, they turn out to be fairly good approximations for low eigenvalues also for values of the
parameter $\e$ away from zero -- see~\cite{bofr1,bofr2,frei}.

We remark in passing that another problem for which it is conjectured that the optimal shape
is given by a double--sided flat disk is Alexandrov's conjecture relating the area and diameter
of surfaces of non--negative curvature.

The structure of the paper is as follows. In the next section we give a precise formulation of
the problem under consideration and state our main results, namely, the nature
of the limiting problem and the relation of the limit and approximating operators. This includes
the form of the asymptotic expansion and the expressions for the first three coefficients
and an application to the case of the surface of an ellipsoid.
Section~\ref{prelim} is then devoted to several preliminaries and auxiliary material used in
Sections~\ref{unifresconv} and~\ref{sec4}, where the proofs of the main
results are presented.
%

\section{Problem formulation and main results\label{formulation}}

Let $x'=(x_1,\ldots,x_n)$, $x=(x',x_{n+1})$ be Cartesian coordinates
in $\RR^n$ and $\RR^{n+1}$ respectively, $n\geqslant 2$, $\om$ be a bounded
domain in $\RR^{n}$ with infinitely smooth boundary. Let also
$h_\pm=h_\pm(x')\in C^\infty(\om)\cap C(\overline{\om})$ denote
two arbitrary functions and define the manifold
\begin{equation}\label{1.0}
\hs_\e:=\{x: x'\in\overline{\om}, \quad x_{n+1}=\e h_+(x')\}\cup \{x:
x'\in\overline{\om}, \quad x_{n+1}=-\e h_-(x')\},
\end{equation}
where $\e$ is a small positive parameter. We assume $\hs_\e$ to be
infinitely differentiable and to have no self-intersections. To ensure this,
we make the following assumptions on $h_\pm$, the first of which
ensures the absence of self-intersections,
\begin{enumerate}\def\theenumi{A\arabic{assumption}}
\addtocounter{assumption}{1}
\item\label{A1} The relations
\begin{equation*}
h_+(x')+h_-(x')>0, \quad x'\in\om, \qquad h_+(x')=h_-(x')=0,\quad
x'\in\p\om,
\end{equation*}
hold true.
\end{enumerate}

To state the second assumption we need to introduce some additional
notation. Let $\nu=\nu(P)$, $P\in\p\om$, be the inward normal to $\p\om$, and
denote by $\tau$ the distance to a point measured in the direction of
$\nu$. Consider equations
\begin{equation}\label{1.2}
t=h_+(P+\tau \nu(P)),\quad t>0,\qquad t=-h_-(P+\tau \nu(P)),\quad
t<0.
\end{equation}
Our second assumption concerns the solvability of these equations
with respect to $\tau$ and implies the smoothness of $\hs_\e$ in a neighbourhood of
$\p\om$:
\begin{enumerate}\def\theenumi{A\arabic{assumption}}
\addtocounter{assumption}{1}
\item\label{A2} There exists $t_0>0$ such that for all
$t\in[-t_0,t_0]$, $P\in\p\om$, equations (\ref{1.2}) have a
unique solution given by
\begin{equation*}
\tau=a(t,P)\in C^\infty([-t_0,t_0]\times\p\om),
\end{equation*}
such that
\begin{equation}\label{1.4}
\frac{\p^2 a}{\p t^2}>0 \quad \text{for all $P\in\p\om$}.
\end{equation}
\end{enumerate}

We observe that assumptions $(A1)$ and $(A2)$ imply that
\begin{equation*}
h_+(x')\geqslant 0,\quad h_-(x')\leqslant 0 \quad \text{in a small
neighbourhood of $\p\om$}.
\end{equation*}

The main object of our study is the Laplace-Beltrami operator $\He$
on $\hs_\e$. We introduce it rigorously as the self-adjoint operator
associated with a symmetric lower-semibounded  sesquilinear form
\begin{equation*}
\he[u,v]:=(\nabla u,\nabla v)_{L_2(\hs_\e)}\quad\text{on}\quad
\H^1(\hs_\e).
\end{equation*}
We recall that on an arbitrary manifold with metric tensor $g$ this
may be written in local coordinates $y=(y_1,\ldots,y_n)$ as
\begin{equation*}
-\Det^{-\frac{1}{2}} g \sum\limits_{i,j=1}^{n} \frac{\p}{\p y_i} g^{ij}\Det^{\frac{1}{2}} g \frac{\p}{\p y_j},
\end{equation*}
where $g^{ij}$ are the entries of the inverse to the metric tensor.
If in our case we take $x'$ as local coordinates on $\hs_\e$, then on
each side $\hs_\e^\pm$ the operator $\He$ may be written in the form
\begin{equation}\label{2.3a}
\He=-(1+\e^2|\nabla_{x'} h_\pm|^2)^{-\frac{1}{2}}
\Div_{x'} (1+\e^2|\nabla_{x'} h_\pm|^2)^{\frac{1}{2}} (\mathrm{E}+\e^2 \mathrm{Q}_\pm)^{-1}\nabla_{x'},
\end{equation}
where $\mathrm{E}$ is the $n\times n$ identity matrix and $\mathrm{Q}_\pm$
is the matrix with entries $\frac{\p h_\pm}{\p x_i}\frac{\p h_\pm}{\p x_j}$.
On the boundary $\p\om$ the coefficients of such operator have singularities,
and this is why in a neighbourhood of $\p\om$ it is more convenient to employ
the coordinates $(\tau,s)$, where $s$ are some local coordinates on $\p\om$.
We do not give here the expression of the operator $\He$ in such coordinates,
as it requires the introduction of additional (cumbersome) notation.

The purpose of the present paper is to describe the asymptotic behavior
of the resolvent and the spectrum of $\mathcal{H}_\e$ as $\e\to+0$.
In this limit, the hypersurface $\hs_\e$ collapses to a flat two-sided
domain $\bs\om=(\om_+,\om_-)$, where $\om_\pm$ are two copies of
$\om$ understood as the {\it upper} and {\it lower} sides of $\bs{\om}$.
Because of this, it is natural to expect that the limiting operator for
$\mathcal{H}_\e$ as $\e\to+0$ is the Laplacian on $\bs{\om}$, i.e.,
that on $\om_\pm$ subject to certain boundary conditions. Indeed,
this is true, and it is our first main result. Namely, we introduce
the space $L_2(\bs{\om})$ as consisting of the vectors
$\bs{u}=(u_+,u_-)$, where the functions $u_\pm$ are defined on
$\om_\pm$ and $u_\pm\in L_2(\om_\pm)$. We can natural identify
$L_2(\bs{\om})$ with $L_2(\om)\oplus L_2(\om)$. In the same way
we introduce the Sobolev spaces $\H^j(\bs{\om})$ assuming that for
each $\bs{u}\in \H^j(\bs{\om})$ the functions
$u_\pm\in\H^j(\om_\pm)$ satisfy the boundary conditions
\begin{equation}\label{1.6}
\frac{\p^i u_+}{\p\tau^i}\Big|_{\p\om}=(-1)^i \frac{\p^i
u_-}{\p\tau^i}\Big|_{\p\om},\quad i=0,1,\ldots,j-1.
\end{equation}
The meaning of these boundary conditions is that the functions
$u_\pm$ should be ``glued smoothly'' while moving from $\om_+$ to
$\om_-$ via $\p\om=\p\om_\pm$. We observe that $\H^j(\bs{\omega})$ is
embedded into $\H^j(\om)\oplus \H^j(\om)$, but does not coincide. It
is also clear that for any $u\in\H^1(\om)$ the function
$\bs{u}:=(u,u)$ belongs to $\H^1(\bs{\om})$. Similarly, if
$u\in\H^2(\om)$, $u|_{\p\om}=0$,  respectively, $u\in\H^2(\om)$,
$\frac{\p u}{\p\tau}\big|_{\p\om}=0$, then
$\bs{u}=(u,-u)\in\H^2(\bs{\om})$, respectively,
$\bs{u}=(u,u)\in\H^2(\bs{\om})$.

Let $\Ho$ be the self-adjoint operator in $L_2(\bs{\om})$ associated
with the closed symmetric lower-semibounded sesquilinear form
\begin{equation*}
\ho[\bs{u},\bs{v}]:=(\nabla \bs{u},\nabla \bs{v})_{L_2(\bs\om)}
\quad\text{on}\quad \H^1(\bs{\om}).
\end{equation*}
By $\Dom(\cdot)$ we denote the domain of an operator, the symbol
$\|\cdot\|_{X\to Y}$ indicates the norm of an operator acting from the
Hilbert space $X$ to a Hilbert space $Y$.

Given any vector $\bs{u}=(u_+,u_-)$ defined on $\bs{\om}$, by
$\PS_\e\bs{u}$ we denote the function on $\hs_\e$ being $u_+(x')$ on
$\{x: x'\in\overline{\om}, \quad x_{n+1}=\e h_+(x')\}$ and $u_-(x')$
on $\{x: x'\in\overline{\om}, \quad x_{n+1}=-\e h_-(x')\}$. And vice
versa, given any function $u$ defined on $\hs_\e$, by $\PS_\e^{-1} u$
we denote the vector $\bs{u}=(u_+,u_-)$, where
$u_\pm=u_\pm(x'):=u(x')$, $x'\in\om$, $x_{n+1}=\e h_\pm(x')$.

\begin{theorem}\label{th1.1}
For each $z\in \CC\setminus\RR$ 
there exists $C(z)>0$ such that
the estimate
\begin{equation}\label{1.8}
\|(\He-z)^{-1}-\PS_\e(\Ho-z)^{-1}
\PS_\e^{-1}\|_{L_2(\hs_\e)\to\H^1(\hs_\e)}\leqslant C(z)\e^{2/3}
\end{equation}
holds true.
\end{theorem}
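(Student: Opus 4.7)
The plan is to use an approximate-inverse strategy. Given $f\in L_2(\hs_\e)$, set $\bs{f}:=\PS_\e^{-1} f\in L_2(\bs{\om})$ and let $\bs{u}=(u_+,u_-):=(\Ho-z)^{-1}\bs{f}$. By definition of $\Ho$ the components satisfy the matching conditions (\ref{1.6}) for $i=0,1$, and the first representation theorem together with elliptic regularity for $\ho$ on the doubled domain $\bs{\om}$ yields the uniform bound $\|\bs{u}\|_{\H^2(\bs{\om})}\leqslant C(z)\|f\|_{L_2(\hs_\e)}$ whenever $z\in\CC\setminus\RR$. I would then take as trial function $\t u_\e:=\PS_\e\bs{u}+w_\e$, where $w_\e$ is a boundary-layer corrector supported in a thin tubular neighbourhood of $\p\om$, and exploit the identity
\begin{equation*}
(\He-z)^{-1}f-\PS_\e\bs{u}=w_\e-(\He-z)^{-1}\bigl[(\He-z)\t u_\e-f\bigr],
\end{equation*}
reducing the proof to controlling $\|w_\e\|_{\H^1(\hs_\e)}$ and $\|(\He-z)\t u_\e-f\|_{L_2(\hs_\e)}$, together with the uniform resolvent bound $\|(\He-z)^{-1}\|_{L_2(\hs_\e)\to\H^1(\hs_\e)}\leqslant C(z)$ which follows at once from the lower semiboundedness of $\he$.

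In the interior region $\{x'\in\om:\dist(x',\p\om)>\delta\}$ the representation (\ref{2.3a}) gives $\He=-\D_{x'}+\e^2 R_\pm$ with $R_\pm$ a second-order operator whose coefficients are uniformly bounded for $\dist(x',\p\om)\geqslant\delta$. Assumption (\ref{A2}) forces $h_\pm(P+\tau\nu(P))\sim\sqrt{\tau}$ near the edge, so $|\nabla h_\pm|\sim\tau^{-1/2}$ and the interior residual produced by $\PS_\e\bs{u}$ is $\Odr(\e^2/\delta^\alpha)$ in $L_2$ for some $\alpha>0$. Near $\p\om$ one switches to the intrinsic coordinates $(\tau,s)$ in which $\hs_\e$ is smooth across the edge and one must treat the two sides as a single manifold: the function $\PS_\e\bs{u}$ is continuous there via (\ref{1.6}) with $i=0$, but it fails to match the curved surface geometry of $\hs_\e$ at higher order, and it is precisely this mismatch that $w_\e$ is designed to absorb.

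The corrector $w_\e$ is constructed by rescaling $\tau$ to a fast variable and solving a one-dimensional problem on the rescaled cross-section that captures the leading edge-behaviour of $\He$, then multiplying by a cut-off of width of order $\e^{2/3}$. The exponent $2/3$ emerges from balancing the $\H^1$-norm of $w_\e$, which grows as the layer narrows, against the interior residual, which grows as the singular region of $R_\pm$ is enlarged. The main obstacle is this boundary-layer analysis: expanding $\He$ in the $(\tau,s)$ chart, isolating the leading operator whose kernel contains the trace $u_\pm|_{\p\om}$, choosing the fast-variable profile so that $\t u_\e\in\H^1(\hs_\e)$ and the residual cancels to the required order at both ends of the layer, and then controlling the commutators of $\He$ with the cut-off. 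Once $\|w_\e\|_{\H^1(\hs_\e)}$ and $\|(\He-z)\t u_\e-f\|_{L_2(\hs_\e)}$ are both bounded by $C(z)\e^{2/3}\|f\|_{L_2(\hs_\e)}$, the displayed identity combined with the uniform resolvent bound yields (\ref{1.8}).
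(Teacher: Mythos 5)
Your overall strategy (approximate inverse built from $\PS_\e(\Ho-z)^{-1}\PS_\e^{-1}$ plus a boundary-layer corrector, then absorb the strong residual with a uniform bound on $(\He-z)^{-1}$) is different from the paper's, and the part you yourself call ``the main obstacle'' is exactly where the argument has a genuine gap that you do not close. First, for the identity $(\He-z)^{-1}f-\PS_\e\bs{u}=w_\e-(\He-z)^{-1}\bigl[(\He-z)\t u_\e-f\bigr]$ to be usable you need $\t u_\e\in\Dom(\He)$ and the residual in $L_2(\hs_\e)$; but in the $x'$-chart the coefficients of $\He$ blow up at $\p\om$ (by (\ref{3.12}), $|\nabla_{x'}h_\pm|^2\sim\tau^{-1}$, hence terms of size $\e^2\tau^{-1}|\D u_\pm|$ and $\e^2\tau^{-2}|\nabla u_\pm|$ in (\ref{5.1})), so without the corrector the strong residual of $\PS_\e\bs{u}$ is not even square-integrable near the edge, and a cut-off alone cannot fix this: the corrector must cancel these singular terms to leading order. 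Second, the only tool the paper has for building such an edge profile is the matched-asymptotics machinery of Section~5, which relies on Taylor-expanding the outer function at $\p\om$; there it is applied to eigenfunctions, which are $C^\infty(\overline{\om}_\pm)$. Here $\bs{u}=(\Ho-z)^{-1}\PS_\e^{-1}f$ is only in $\H^2(\bs{\om})$ for general $f\in L_2$, so the ``fast-variable profile'' you invoke cannot be constructed this way, and you give no substitute; likewise the claimed interior residual bound $\Odr(\e^2/\d^\a)$ ``for some $\a>0$'' and the balance producing exactly $\e^{2/3}$ are asserted, not derived. (Your uniform bound $\|(\He-z)^{-1}\|_{L_2\to\H^1}\le C(z)$ is fine, but it does not help until the corrector and residual estimates exist.)

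For comparison, the paper avoids any corrector and never applies the differential expression to $\PS_\e\bs{u}^{(0)}$: it works entirely with the quadratic forms. Setting $v^{(\e)}=u^{(\e)}-\PS_\e\bs{u}^{(0)}$ and using it as a test function in both weak formulations (\ref{3.16}), (\ref{3.17}), the error reduces to integrals involving only first derivatives and the weights $J_\e^\pm$, $G_\pm^{-1}$. These are split at $\tau=\e^{4/3}$: away from the boundary one uses $\e^2|\nabla_{x'}h_\pm|^2\le C\e^{2/3}$, and in the thin strip one uses the weighted estimates of Lemma~\ref{lm3.3}, which exploit that $J_\e^\pm$ is integrable across the edge and give $O(\e^{2/3})$ smallness of $L_2$-norms over a strip of width $\e^{4/3}$; the balance $\e^2/\d\sim\d^{1/2}$ is what produces the exponent $2/3$. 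This form-level argument needs only $\bs{u}^{(0)}\in\H^2(\bs{\om})$ (Lemma~\ref{lm1.1}) and sidesteps both difficulties above. If you want to salvage your route, you would either have to prove enough regularity and decay of $\bs{u}$ at $\p\om$ to build $w_\e$, or recast your residual computation weakly --- at which point you essentially recover the paper's proof.
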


\begin{remark}\label{rm1.1}
The statement of this theorem includes the fact that the operator
$\PS_\e(\Ho-z)^{-1} \PS_\e^{-1}$ is well-defined as a bounded one from $L_2(\hs_\e)$
into $\H^1(\hs_\e)$.
\end{remark}

In view of the embedding of $\H^1(\bs{\om})$ into
$\H^1(\om)\oplus\H^1(\om)$, and the compact embedding of the latter
into $L_2(\om)\oplus L_2(\om)=L_2(\bs{\om})$, the operator $\He$ has
a compact resolvent. Hence, it has a pure discrete spectrum
accumulating only at infinity. The same is true for the Dirichlet
and Neumann Laplacians $\Hd$ and $\Hn$ on $\om$. Recall that $\Hd$
is the Friedrichs extension in $L_2(\om)$ of $-\D$ from
$C_0^\infty(\Om)$, and $\Hn$ is the self-adjoint operator in
$L_2(\om)$ associated with the sesquilinear form $(\nabla u,\nabla
v)_{L_2(\Om)}$ on $\H^1(\om)$. In what follows $\discspec(\cdot)$
denotes the discrete spectrum of an operator.

Our next result follows from Theorem~\ref{th1.1} and \cite[Thms. V\!I\!I\!I.23, V\!I\!I\!I.24]{RS1}.

\begin{theorem}\label{th2.2}
The eigenvalues of $\He$ converge to those of $\Ho$ as $\e$ goes to zero.
In particular, if $\l\not\in\discspec(\Ho)$, then $\l\not\in\discspec(\He)$ for $\e$
small enough. For each $m$-multiple eigenvalue
$\l\in\discspec(\Ho)$ there exist exactly $m$ eigenvalues
(counting multiplicities) of $\He$ converging to $\l$ as
$\e\to+0$. Let $\mathcal{P}_0$ be the projector on the eigenspace
associated with $\l$, $\mathcal{P}_\e$ be the total projector
associated with the eigenvalues of $\He$ converging to $\l$. Then
the convergence
\begin{equation*}
\|\mathcal{P}_\e - \PS_\e
\mathcal{P}_0\PS_\e^{-1}\|_{L_2(\hs_\e)\to\H^1(\hs_\e)}\to0,\quad
\e\to+0,
\end{equation*}
holds true.
\end{theorem}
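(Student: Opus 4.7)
The plan is to deduce Theorem~\ref{th2.2} from the uniform resolvent estimate (\ref{1.8}) by the standard Riesz projector argument, which is essentially the content of the cited Reed--Simon theorems. The only delicate point is that $\He$ and $\Ho$ act in different Hilbert spaces; this is handled via the intertwining maps $\PS_\e$ and $\PS_\e^{-1}$.

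As a preliminary I would record that, since the area element on $\hs_\e^\pm$ in the coordinates $x'$ is $(1+\e^2|\nabla_{x'}h_\pm|^2)^{1/2}\,dx'$, the maps $\PS_\e:L_2(\bs\om)\to L_2(\hs_\e)$ and $\PS_\e^{-1}:L_2(\hs_\e)\to L_2(\bs\om)$ are uniformly bounded in $\e$ with norms $1+O(\e^2)$. This makes $\PS_\e$ close to a unitary identification and lets one transfer operator-norm bounds between the two ambient spaces without loss in the order of $\e$.

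For the spectral statements, fix $\l\in\RR$ and choose a circle $\gamma\subset\CC\setminus\RR$ centered at $\l$ so small that either (i) $\gamma$ encloses no point of $\discspec(\Ho)$, or (ii) $\gamma$ encloses only $\l$ among $\discspec(\Ho)$, this $\l$ being an $m$-fold eigenvalue. By (\ref{1.8}) applied uniformly for $z\in\gamma$, the Riesz projectors
\begin{equation*}
\mathcal{P}_\e=-\frac{1}{2\pi\iu}\oint_\gamma (\He-z)^{-1}\,dz, \qquad \PS_\e\mathcal{P}_0\PS_\e^{-1}=-\frac{1}{2\pi\iu}\oint_\gamma \PS_\e(\Ho-z)^{-1}\PS_\e^{-1}\,dz,
\end{equation*}
differ by $O(\e^{2/3})$ in the $L_2(\hs_\e)\to\H^1(\hs_\e)$ norm, which is exactly the projection bound announced in the theorem. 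In case (i) the second projector vanishes, so $\mathcal{P}_\e=0$ for small $\e$ and $\l\notin\discspec(\He)$. In case (ii) the second projector has rank $m$; since projectors close in norm have equal rank, $\mathcal{P}_\e$ also has rank $m$, producing exactly $m$ eigenvalues of $\He$ (counting multiplicity) inside $\gamma$. Shrinking the radius of $\gamma$ then shows that each such eigenvalue converges to $\l$.

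The main obstacle is essentially bookkeeping: verifying that the $L_2\to\H^1$ operator-norm bound in (\ref{1.8}) survives contour integration and composition with the near-isometric maps $\PS_\e^{\pm 1}$. Because (\ref{1.8}) is already stated in the $\H^1$-valued norm, the projector estimate automatically emerges in the stronger topology required by the theorem, and no additional elliptic regularity input is needed beyond the resolvent estimate itself.
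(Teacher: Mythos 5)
Your strategy — deducing the theorem from the resolvent estimate (\ref{1.8}) via Riesz projectors, with $\PS_\e$, $\PS_\e^{-1}$ bridging the two Hilbert spaces — is exactly the machinery the paper invokes by citing Theorem~\ref{th1.1} together with Reed--Simon Thms.~VIII.23--24. But as written your key step has a genuine gap: you apply (\ref{1.8}) ``uniformly for $z\in\gamma$'' on a circle $\gamma$ centred at the real point $\l$. Theorem~\ref{th1.1} is stated only for $z\in\CC\setminus\RR$, and its constant $C(z)$ is not uniform as $\IM z\to 0$: the proof relies on (\ref{2.7a}), $\|(\Ho-z)^{-1}\|\leqslant C/|\IM(z)|$, and on the coercivity bound $\bigl|\|\nabla v^{(\e)}\|^2-z\|v^{(\e)}\|^2\bigr|\geqslant C\|v^{(\e)}\|^2_{\H^1}$, whose constant also degenerates near the real axis. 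Since $\gamma$ must wind around the real number $\l$, it necessarily meets $\RR$; at and near those points Theorem~\ref{th1.1} gives neither the existence of $(\He-z)^{-1}$ nor any bound, and knowing that $\gamma$ lies in the resolvent set of $\He$ is part of what is to be proved. So the contour integral defining $\mathcal{P}_\e$ and the claimed $O(\e^{2/3})$ closeness of the two projectors are, at this stage, circular; the ``bookkeeping'' you dismiss is precisely where the work lies.

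The gap can be closed in two standard ways. (a) Do what the paper's citation does: from (\ref{1.8}) at a single non-real point (say $z=\iu$), after conjugating by the near-isometries $\PS_\e^{\pm1}$, one has norm resolvent convergence, and the Reed--Simon theorems yield non-expansion of the spectrum and norm convergence of the spectral projections by functional-calculus arguments that never integrate through the real axis; an extra remark upgrading $L_2\to L_2$ to $L_2\to\H^1$ convergence (using the form of $\he$) then gives the stated norm. (b) Keep the contour argument, but first prove spectral localization (e.g.\ from VIII.24(a), or directly from the estimate at $z=\l+\iu\eta$ with $\eta$ fixed), so that $\dist(z,\spec(\He))\geqslant\d/2$ on $\gamma$ for small $\e$; then propagate the estimate from a fixed non-real $z_0$ to all $z\in\gamma$ via the identity
\begin{equation*}
D(z)=\bigl[\mathrm{Id}+(z-z_0)(\He-z)^{-1}\bigr]\,D(z_0)\,\bigl[\mathrm{Id}+(z-z_0)\PS_\e(\Ho-z)^{-1}\PS_\e^{-1}\bigr],
\end{equation*}
where $D(z):=(\He-z)^{-1}-\PS_\e(\Ho-z)^{-1}\PS_\e^{-1}$, the outer factors now being uniformly bounded (and, for the $\H^1$-valued estimate, using the form bound to control $(\He-z)^{-1}$ from $L_2$ to $\H^1$ on $\gamma$). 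With either repair, the rest of your argument — equality of ranks of nearby projections and shrinking $\gamma$ — is correct.
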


Let now $\l$ be an eigenvalue of $\Ho$ with multiplicity $m$ and $\bs{\psi}_i=(\psi_+^{(i)},\psi_-^{(i)})$ be 
associated eigenfunctions orthonormalized in $L_2(\bs{\om})$. It will be shown in the next section in Lemma~\ref{lm1.2}
that the asymptotics
\begin{equation}\label{2.15}
\psi_\pm^{(i)}(x')=\Psi_i^{(0)}(P) \pm \Psi_i^{(1)}(P)\tau + \Odr(\tau^2),\quad P\in\p\om,\quad
\tau\to+0,
\end{equation}
hold true, where
\begin{align*}
&\Psi_i^{(0)}=\psi_+^{(i)}\big|_{\p\om}=\psi_-^{(i)}\big|_{\p\om}\in C^\infty(\p\om),
\quad
\Psi_i^{(1)}=\frac{\p\psi_+^{(i)}}{\p\tau}\Big|_{\p\om}=-\frac{\p\psi_-^{(i)}}{\p\tau}\big|_{\p\om}\in C^\infty(\p\om)
\end{align*}

By $-\D_{\p\om}$ we denote  the Laplace-Beltrami operator on $\p\om$, where the metric $\mathrm{G}_{\p\om}$ on $\p\om$ is induced by the Euclidean one in $\mathds{R}^n$. For any smooth functions $u,v$ on $\p\om$,
we shall denote the pointwise scalar product of its gradients by $\nabla u \cdot \nabla v$.

Let
\begin{equation}\label{3.12a}
\om^\d:=\om\setminus\{x': 0<\tau<\d\}.
\end{equation}
Employing the coefficients of the asymptotics (\ref{2.15}), we introduce two real symmetric matrices
$\Lambda^{(0)}$, $\Lambda^{(1)}$ with entries

\begin{equation}
\begin{aligned}
\Lambda^{(0)}_{ij}:=& \int\limits_{\p\om} \frac{1}{a_2} \big( \l \Psi_i^{(0)}\Psi_j^{(0)}-\nabla\Psi_i^{(0)}\cdot \nabla\Psi_j^{(0)} +\Psi_i^{(1)}\Psi_j^{(1)} \big)\di\om
\end{aligned}\label{2.11a}
\end{equation}
and
\begin{equation}
\begin{aligned}
\Lambda^{(1)}_{ij}:=&-\lim\limits_{\d\to+0} \Bigg[
\frac{1}{2} \int\limits_{\om^\d} |\nabla_{x'}h_+|^2\big(\l\psi_+^{(i)} \psi_+^{(j)} -(\nabla_{x'}\psi_+^{(i)}, \nabla_{x'} \psi_+^{(j)})_{\mathds{R}^d} \big)\di x'
\\
&\hphantom{\lim\limits_{\d\to+0} \Bigg[}+ \frac{1}{2} \int\limits_{\om^\d} |\nabla_{x'}h_-|^2\big(\l\psi_-^{(i)} \psi_-^{(j)} -(\nabla_{x'}\psi_-^{(i)}, \nabla_{x'} \psi_-^{(j)})_{\mathds{R}^d} \big)\di x'
\\
&\hphantom{\lim\limits_{\d\to+0} \Bigg[}+\int\limits_{\om^\d} (\nabla_{x'}h_+,\nabla_{x'}\psi_+^{(i)})_{\mathds{R}^d}
(\nabla_{x'}h_+,\nabla_{x'}\psi_+^{(j)})_{\mathds{R}^d}\di x'
\\
&\hphantom{\lim\limits_{\d\to+0} \Bigg[}+\int\limits_{\om^\d}
(\nabla_{x'}h_-,\nabla_{x'}\psi_-^{(i)})_{\mathds{R}^d}
(\nabla_{x'}h_-,\nabla_{x'}\psi_-^{(j)})_{\mathds{R}^d}
  \di x'
\\
&\hphantom{\lim\limits_{\d\to+0} \Bigg[}+ \ln \d \int\limits_{\p\om} \frac{1}{4a_2} \big( \Psi_i^{(1)} \Psi_j^{(1)}+\l\Psi_i^{(0)}\Psi_j^{(0)} -\nabla\Psi_i^{(0)}\cdot \nabla\Psi_j^{(0)}\big)\di s\Bigg]
\\
&-\int\limits_{\p\om}\frac{1+4\ln 2+\ln a_2}{4a_2}
\big(\Psi_i^{(1)}\Psi_j^{(1)}+\l\Psi_i^{(0)}  \Psi_j^{(0)}-\nabla\Psi_i^{(0)}\cdot \nabla\Psi_j^{(0)}
\big)\di s,
\end{aligned}\label{2.11b}
\end{equation}
where
\begin{equation*}
a_2(P):=\frac{1}{2}\frac{\p^2 a}{\p t^2}(0,P).
\end{equation*}
It will be shown in Sec.~4 that the matrix $\Lambda^{(1)}$ is well-defined. By the theorem on simultaneous diagonalization of two quadratic forms, in what follows the eigenfunctions $\bs{\psi}_i$ are supposed to be orthonormalized in $L_2(\bs\om)$ and the matrix
$\Lambda^{(0)}+ \frac{1}{\ln\e} \Lambda^{(1)}$
to be diagonal. The eigenfunctions $\bs{\psi}_i$ chosen in this way depend on $\e$, but it is clear that the norms $\|\psi_\pm^{(i)}\|_{C^k(\overline{\om})}$ are bounded uniformly in $\e$ for all $k\geqslant 0$, $i=1,\ldots,m$.

\begin{theorem}\label{th2.3}
Let $\l$ be an $m$-multiple eigenvalue of $\Ho$ and  $\bs{\psi}_i$, $i=1,\ldots,m$, be the associated eigenfunctions of $\Ho$ chosen as described
above.
Then there exist exactly
$m$ eigenvalues $\l_k(\e)$, $k=1,\ldots,m$
(counting multiplicity) of $\He$ converging to $\l$. These
eigenvalues satisfy the asymptotic expansions
\begin{equation}\label{2.16}
\l_k(\e)=\l+\e^2\ln\e\, \mu_k\left(
\frac{1}{\ln\e}
\right)+\Odr(\e^{2+\rho}),
\end{equation}
where $\mu_k$ are the eigenvalues of the matrix $\Lambda^{(0)}+ \frac{1}{\ln\e} \Lambda^{(1)}$, and $\rho$ is any constant in $(0,1/2)$. The eigenvalues $\mu_k\left(
\frac{1}{\ln\e}
\right)$ are holomorphic in $\frac{1}{\ln\e}$ and converge to the eigenvalues of $\Lambda^{(0)}$ as $\e\to0$.
\end{theorem}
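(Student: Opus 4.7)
My strategy is to construct, for each $k\in\{1,\ldots,m\}$, an explicit quasi-mode $(L_\e^{(k)}, U_\e^{(k)})$ satisfying
\[
\|(\He - L_\e^{(k)}) U_\e^{(k)}\|_{L_2(\hs_\e)} \leqslant C\e^{2+\rho},\qquad \|U_\e^{(k)}\|_{L_2(\hs_\e)}\geqslant c>0,
\]
with $L_\e^{(k)}$ of the form prescribed by~(\ref{2.16}) and $\{U_\e^{(k)}\}_{k=1}^m$ uniformly linearly independent. The resolvent bound of Theorem~\ref{th1.1} then locates $m$ eigenvalues of $\He$ within $O(\e^{2+\rho})$ of the $L_\e^{(k)}$, and Theorem~\ref{th2.2} identifies them with the eigenvalues $\lambda_k(\e)$ converging to $\lambda$.

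The quasi-modes will be built by matched asymptotic expansions on $\hs_\e^\pm$. In the outer region $\om^\delta$, the representation~(\ref{2.3a}) shows that $\He$ differs from $-\Delta_{x'}$ by an $O(\e^2)$ operator whose coefficients involve $|\nabla h_\pm|^2$ and $\mathrm{Q}_\pm$, so I take the ansatz $u_\pm^{\mathrm{out}}(x') = \psi_\pm^{(k)}(x') + \e^2 v_\pm(x') + \ldots$ and derive an inhomogeneous equation $(-\Delta - \lambda) v_\pm = f_\pm$ whose right-hand side is exactly the integrand of the outer integrals in~(\ref{2.11b}). Near $\p\om$, by assumption~$(A2)$ and~(\ref{1.2}) one has $a(t,P)=a_2(P)t^2+O(t^3)$, so I switch to $(\tau, s)$ coordinates and stretch $\tau\sim\e^2\xi$; the leading inner problem is a two-dimensional Laplacian on a thin parabolic cross-section, whose matching with the outer ansatz produces logarithmic terms in $\e$, in exact analogy with the expansion of the elliptic integral $E(1-\e^2)$ that drives the ellipse computation in the introduction.

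The eigenvalue correction arises from the Fredholm solvability of the next-order outer equation on $\bs{\om}$. Projecting the right-hand side onto $\ker(\Ho - \lambda)=\mathrm{span}\{\bs{\psi}_i\}$ yields the $m\times m$ matrix equation
\[
\big(\Lambda^{(0)}+\tfrac{1}{\ln\e}\Lambda^{(1)}-\mu I\big)\,c=0.
\]
The $\Lambda^{(0)}$-term is produced by integration by parts in the outer integrals, which via $(-\Delta-\lambda)\psi_\pm=0$ and the boundary asymptotics~(\ref{2.15}) reduces the bulk contribution to the boundary integral~(\ref{2.11a}). The logarithmically divergent $\ln\delta$-tails of the outer integrals in~(\ref{2.11b}) are cancelled by the matching condition with the boundary-layer profile, leaving the finite remainder that assembles $\Lambda^{(1)}$ together with its universal constants $\tfrac{1+4\ln 2+\ln a_2}{4a_2}$. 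Choosing $\bs{\psi}_i$ so that $\Lambda^{(0)}+\frac{1}{\ln\e}\Lambda^{(1)}$ is diagonal decouples the system into scalar expansions and delivers~(\ref{2.16}); holomorphy of $\mu_k$ in $1/\ln\e$ is immediate because the matrix depends linearly on this parameter and the eigenvalues of a holomorphic family of real symmetric matrices are holomorphic.

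The main obstacle is the inner analysis near $\p\om$ and its matching with the outer expansion. One must solve an explicit model problem on the ellipse-like cross-section (governed entirely by $a_2(P)$ through the stretched variable), isolate the $\log$-growth of its solution at infinity in $\xi$, and verify that the resulting matching constants combine with the outer boundary terms to reproduce the precise coefficients $\frac{1}{4a_2}$ and $\frac{1+4\ln 2+\ln a_2}{4a_2}$ in~(\ref{2.11a})--(\ref{2.11b}). Controlling the remainder uniformly in an intermediate annular zone $\delta(\e)<\tau<2\delta(\e)$ with $\delta(\e)\to0$, tracking the Jacobian of the $(\tau,s)\mapsto(x',x_{n+1})$ parametrization on each side of $\hs_\e$, and patching the outer and inner ansätze by cutoffs to obtain a global quasi-mode accurate to $O(\e^{2+\rho})$ in $L_2(\hs_\e)$ is where the bulk of the technical work will lie.
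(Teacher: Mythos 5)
Your formal construction coincides with the paper's: matched outer/inner expansions, the boundary-layer problem governed by $a_2(P)$ in the stretched variable (note, though, that the correct model operator is the one-dimensional operator $-\frac{1}{\sqrt{4\xi^2+b_1^2}}\partial_\xi\frac{1}{\sqrt{4\xi^2+b_1^2}}\partial_\xi$ along the elliptic cross-sectional curve, with $\tau=\e^2\xi^2$, not a two-dimensional Laplacian), and the matrix $\Lambda^{(0)}+\frac{1}{\ln\e}\Lambda^{(1)}$ arising from the Fredholm solvability condition for the outer corrector, with holomorphy of $\mu_k$ by Kato--Rellich. The genuine gaps are in the justification. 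First, a quasi-mode built only from the terms that enter (\ref{2.16}) --- $\bs{\psi}_k+\e^2\ln\e\,\bs{\phi}_k$ outside and $v_0^{(k)}+\e^2v_2^{(k)}+\e^3v_3^{(k)}+\e^4v_4^{(k)}$ inside, glued at $\tau\sim\e^\a$ --- does \emph{not} have discrepancy $O(\e^{2+\rho})$. In the inner region the eigenvalue equation is satisfied only through order $\e^0$; the uncancelled sources at orders $\e$ and $\e^2$ grow in $\xi$ (roughly like $\xi$ and $\xi^2$, since they must generate $v_5^{(k)}\sim\xi^3$ and $v_6^{(k)}\sim\xi^4$ to match the $\tau^{3/2}$ and $\tau^2$ behaviour of the outer functions), and on $|\xi|\lesssim\e^{\a/2-1}$ with surface measure $\sim\e^2\sqrt{4\xi^2+b_1^2}\,\di\xi\di s$ their $L_2(\hs_\e)$ norms are of order roughly $\e^{\a}$ and $\e^{3\a/2}$ --- never $o(\e^2)$ for $\a\in(0,1)$, and the commutator terms from the cut-off behave no better. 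This is exactly why the paper constructs an additional full order of correctors ($\e^4\ln^2\e\,\eta_k$, $\bs{\theta}_k$, $v_5^{(k)}$, $v_6^{(k)}$) before patching, obtaining $\|F_\e^{(k)}\|_{L_2(\hs_\e)}\leqslant C\e^{5\a/2}$ and then taking $\a>4/5$; without these extra terms your claimed bound $\|(\He-L_\e^{(k)})U_\e^{(k)}\|\leqslant C\e^{2+\rho}$ is unattainable.

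Second, the passage from quasi-modes to (\ref{2.16}) is misattributed and incomplete. Theorem~\ref{th1.1} gives resolvent convergence only at rate $\e^{2/3}$ and cannot locate eigenvalues to precision $\e^{2+\rho}$; what is actually used is self-adjointness of $\He$ together with the separation of the cluster $\{\l_i(\e)\}_{i=1}^m$ from the rest of the spectrum (Theorem~\ref{th2.2}), encoded in the uniform bound on the reduced resolvent of Lemma~\ref{lm3.4}. More importantly, since the $m$ quasi-eigenvalues differ only at order $\e^2\ln\e$ and may even coincide, linear independence of the $U_\e^{(k)}$ plus the crude quasi-mode lemma only shows that each $L_\e^{(k)}$ lies within $C\e^{2+\rho}$ of \emph{some} eigenvalue; it does not yield the asymptotics for all $m$ eigenvalues counted with multiplicity. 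The paper closes this by expanding $\widehat{\psi}_\e^{(k)}$ over the true eigenfunctions, showing the coefficient matrix $\big(\g_i^{(k)}(\e)\big)$ is asymptotically orthogonal, and running the determinant/permutation argument to pair each quasi-mode with a distinct eigenvalue before dividing by the corresponding spectral gap. Some equivalent multiplicity-resolving step must be added to your scheme for the statement as formulated to follow.
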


In addition to the asymptotic expansions for the eigenvalues $\l_i(\e)$ given in this theorem, we also obtain
the asymptotics for the total projector associated with these eigenvalues. However, to formulate this result
we have to introduce additional notation and it is thus more convenient to postpone its statement which will
them be made at the end of Sec.~\ref{sec4} -- see Theorem~\ref{th4.7}.

Let us describe briefly the main ideas employed in the proof of the main results. The proof of the
uniform resolvent convergence in Theorem~\ref{th1.1} is based on the analysis of the quadratic forms
associated with the perturbed and the limiting operators and on the accurate estimates of the functions
in certain weighted Sobolev spaces. The proof of the first theorem uses essentially the method of
matching asymptotic expansions \cite{Il} for formal construction of the asymptotics for the
eigenfunctions associated with $\l_k(\e)$. These asymptotics are constructed as a combination of
outer and inner expansions. The former depends on $x'$ and its coefficients have singularities at
$\p\om$. In the vicinity of $\p\om$ we introduce a special rescaled variable
$\xi:=a^{1/2}(x_{n+1}\e^{-1},P)\e^{-1}$ as $x_{n+1}>0$ and
$\xi:=-a^{1/2}(x_{n+1}\e^{-1},P)\e^{-1}$ as $x_{n+1}<0$. This variable then describes the slope of
$\hs_\e$ in the vicinity of $\e$ -- see also equations (\ref{4.54}) giving the parametrization of
$\hs_\e$ in the vicinity of $\p\om$. After rewriting the eigenvalue equation in the variables $(\xi,s)$,
where $s$ are local coordinates on $\p\om$, its leading term is in fact the Laplace-Beltrami operator on
the ellipse giving rise to the logarithmic terms in the asymptotics for both the eigenvalues and the
eigenfunctions.

Despite the fact that we are only presenting the leading terms of the asymptotics for $\l_k(\e)$
and for the associated total projector in Theorems~\ref{th2.3} and~\ref{th4.7}, respectively, our
approach also allows us to construct the complete asymptotic expansions if required. Although
this would need to be checked in a way similar to what was done here for the first few terms,
the ansatzes (\ref{4.1}) and (\ref{4.116}) suggest that the complete asymptotic expansion for
the eigenvalues should be
\begin{equation*}
\l_k(\e)=\l+\e^2\ln\e\mu_k(\e)+\sum\limits_{i=2}^{\infty} \e^{2i}\ln^i\e \mu_k^{(i)}\left(
\frac{1}{\ln\e}
\right),
\end{equation*}
where $\mu_k^{(i)}$ are functions holomorphic in $\frac{1}{\ln\e}$. These higher-order terms would
then still reflect the behaviour observed in the ellipse example given in the Introduction.

Although the above formulas for $\L_{ij}^{(0)}$ and (specially) $\L_{ij}^{(1)}$ may look quite
cumbersome at a first glance, they will actually simplify when computed for particular cases as some of
the terms involved will vanish depending on whether we are considering Dirichlet or Neumann boundary
conditions on $\partial\omega$. We note that a similar effect was already present when computing the
coefficients in the expansions obtained in~\cite{bofr1,bofr2}. This is particularly clear in
the second of these papers dealing with dimensions higher than two, where the general expression is
quite complicated and needs to be computed specifically in each case. When this is done for general
ellipsoids in any dimension, for instance, it yields a much simpler one-line expression.

We shall illustrate this by considering a thin ellipsoidal surface. To this end
take $\om$ to be the unit disk centred at the origin with
\begin{equation}\label{ex2}
h_\pm(x'):=\sqrt{1-r^2},\quad r=|x'|,\quad\tau=1-r,\quad a_2=\frac{1}{2}.
\end{equation}
In this instance the limiting eigenvalues may be found via separation of variables and they will be of the
form $\kappa^2$, where $\kappa$ are the zeroes of the Bessel function $J_\kappa$ and its derivative $J_\kappa'$,
corresponding to eigenfunctions satisfying Dirichlet and Neumann boundary conditions on $\p\om$, respectively.
The following examples illustrating both cases are taken from~\cite{bofr3}, where the details may be found.

We consider the case of Dirichlet boundary conditions first, i.e.,
\begin{align*}
&J_0(\kappa)=0,\quad \l=\kappa^2,\quad \psi(x)=-\frac{J_0(\kappa r)}{\sqrt{2\pi}J_1(\kappa)},
\\
&\bs{\psi}=(\psi,-\psi),
\quad
\Psi^{(0)}=0,\quad \Psi^{(1)}=-\frac{\kappa}{\sqrt{2\pi}}.
\end{align*}
Substituting these formulas and (\ref{ex2}) into (\ref{2.11a}) and (\ref{2.11b}), we then obtain
\begin{equation*}
\L^{(0)}_{11}=2\l
\end{equation*}
and
\begin{align*}
\L^{(1)}_{11}=& -\frac{\l}{J_1^2(\kappa)}\int\limits_0^{1} \frac{r^3}{1-r^2}\Big( J_0^2(\kappa r)+J_1^2(\kappa r)-J_1^2(\kappa)\Big)\di r-\l \ln 2.
\end{align*}
The asymptotics~(\ref{2.16}) thus become
\begin{equation*}
\l_\kappa(\e)=\l+\e^2\left(2\l\ln\e+\L^{(1)}_{11}\right)+\Odr(\e^{2+\rho})
\end{equation*}
and, for a particular eigenvalue, the remaining integral
may be computed numerically. We illustrate this by considering the case corresponding to the
first Dirichlet eigenvalue on the disk which yields
\begin{align*}
\l_1(\e) = & j_{0,1}^2+\e^2(2j_{0,1}^2\ln\e+\L^{(1)}_{11})+\Odr(\e^{2+\rho})
\\
\approx & 5.7831 + 11.5664\, \e^2\ln\e - 6.0871\,\e^2+\Odr(\e^{2+\rho}).
\end{align*}

As an example of limiting multiple eigenvalue we consider the first nontrivial Neumann eigenvalue of
the disk. In two dimensions this is a double eigenvalue with associated (normalized) eigenfunctions given by
\[
\psi_1(x)=\frac{J_1(\jp r)\cos\tht}{J_0(\jp)\sqrt{\pi(\jp^2-1)}},\quad
\psi_2(x)=\frac{J_1(\jp r)\sin\tht}{J_0(\jp)\sqrt{\pi(\jp^2-1)}},
\]
where $\tht$ is the polar angle corresponding to $x$ and $\jp$ is the first nontrivial zero of
$J_{1}'$.

The eigenfunctions in $L_2(\bs{\om})$ are then given by $\bs{\psi}_{i}=(\psi_{i},\psi_{i})$, $i=1,2$,
from which we have
\begin{equation*}
\Psi_{1}^{(0)} = \frac{ J_1(\jp)\cos\tht}{J_0(\jp)\sqrt{\pi(\jp^2-1)}},
\quad\Psi_{2}^{(0)} = \frac{J_1(\jp)\sin\tht}{J_0(\jp)\sqrt{\pi(\jp^2-1)}}
\end{equation*}
and $\Psi_{i}^{(1)} = 0,$ $i=1,2.$ Proceeding as before, we
\begin{equation*}
\Lambda_{11}^{0} = \Lambda_{22}^{0} = \frac{2J_{1}^{2}(\jp)}{  J_0^2(\jp)}=2\jp^2=2\l\quad \text{and} \quad \Lambda_{ij}^{0}=0 \;\;(i\neq j).
\end{equation*}
For the next term we now obtain
\begin{align*}
\L_{ii}^{(1)}  = &
-\frac{\jp^2 }{J_0^2(\jp) (\jp^2-1)}
\int_{0}^{1} \frac{r^3}{1-r^2} \bigg[
J_1^2(\jp r)- J_1^2(\jp) + J_0^2(\jp r) + J_0^2(\jp)
\\
&\hphantom{\frac{\jp^2 }{J_0^2(\jp) (\jp^2-1)}
\int_{0}^{1}}- \frac{2}{\jp r} J_0(\jp r) J_1(\jp r)
\bigg]\di r
-\l\ln 2.
\end{align*}
for $i=1,2$ and $\L_{ij}=0$ for $i\neq j$.

>From this, and again computing the relevant integrals numerically, we obtain
\[
\begin{array}{lll}
\l_i(\e)& = & (j_{1,1}')^2+\e^2\Big(2\l\ln\e+\L^{(1)}_{ii}\Big)+\Odr(\e^{2+\rho})
\eqskip
 & \approx & 3.3900 + 6.7799\, \e^2\ln\e -1.8555\,\e^2 +\Odr(\e^{2+\rho}), \quad i=1,2.
\end{array}
\]
Due to the radial symmetry of $\om$, it is clear that these two eigenvalues should coincide, and the
associate eigenfunctions converge to $\bs{\psi}_1$ and $\bs{\psi}_2$.

\section{Preliminaries}\label{prelim}

In this section we discuss two parameterizations of the surface $\hs_\e$ and prove three auxiliary lemmas which will be used in the next
sections for proving Theorems~\ref{th1.1},~\ref{th2.3}.

\subsection{First parametrization of $\hs_\e$}

The first parametrization is that used in the definition of $\hs_\e$ in (\ref{1.0}), i.e., each point on $\hs_\e$ is described as $x_{n+1}=\pm \e h_\pm(x')$, $x'\in\overline{\om}$, where the sign corresponds to the upper or lower part of $\hs_\e$. Let us first calculate the metrics on $\hs_\e$ in terms of the variables $x'$.

The tangential vectors to
$\hs_\e$ at the point $x'\in\om$, $x_{n+1}=\e h_\pm(x')$ are
\begin{equation*}
\left(0,\ldots,0,1,0,\ldots,0,
\e
\frac{\p h_\pm}{\p x_i}\right),\quad
i=1,\ldots,n,
\end{equation*}
where ``$1$'' stands on $i$-th position. Thus, the metric tensor has
the form
\begin{equation*}
\mathrm{G}_\pm(x',\e):=
\begin{pmatrix}
1+\e^2\left(\frac{\p h_\pm}{\p x_1}\right)^2 & \e^2 \frac{\p
h_\pm}{\p x_1} \frac{\p h_\pm}{\p x_2} & \e^2 \frac{\p h_\pm}{\p
x_1} \frac{\p h_\pm}{\p x_3} & \ldots & \e^2 \frac{\p h_\pm}{\p x_1}
\frac{\p h_\pm}{\p x_n}
\\
\e^2 \frac{\p h_\pm}{\p x_2} \frac{\p h_\pm}{\p x_1} &
 1+\e^2\left(\frac{\p h_\pm}{\p x_2}\right)^2 & \e^2 \frac{\p
h_\pm}{\p x_2} \frac{\p h_\pm}{\p x_3}   & \ldots & \e^2 \frac{\p
h_\pm}{\p x_2} \frac{\p h_\pm}{\p x_n}
\\
\e^2 \frac{\p h_\pm}{\p x_3} \frac{\p h_\pm}{\p x_1} & \e^2 \frac{\p
h_\pm}{\p x_3} \frac{\p h_\pm}{\p x_2} &
 1+\e^2\left(\frac{\p h_\pm}{\p x_3}\right)^2 &   \ldots & \e^2 \frac{\p
h_\pm}{\p x_3} \frac{\p h_\pm}{\p x_n}
\\
\vdots & \vdots & \vdots & \ddots & \vdots
\\
\e^2 \frac{\p h_\pm}{\p x_n} \frac{\p h_\pm}{\p x_1} & \e^2 \frac{\p
h_\pm}{\p x_n} \frac{\p h_\pm}{\p x_2} & \e^2 \frac{\p h_\pm}{\p
x_{n-1}} \frac{\p h_\pm}{\p x_3}
 & \vdots & 1+\e^2\left(\frac{\p h_\pm}{\p x_n}\right)^2
\end{pmatrix}
\end{equation*}
It easy to see that
\begin{equation}\label{3.3}
\mathrm{G}_\pm(x',\e)=\mathrm{E}+\e^2\mathrm{Q}_\pm,\quad
\mathrm{Q}_\pm:=(\nabla_{x'} h_\pm) (\nabla_{x'} h_\pm)^*,
\end{equation}
where $\nabla_{x'} h_\pm$ is treated as a column vector, and ``$*$''
denotes transposition.

\begin{lemma}\label{lm3.2}
The matrix $G_\pm$ has two eigenvalues, the $(n-1)$-multiple
eigenvalue $1$, and the simple eigenvalue $(1+\e^2|\nabla_{x'}
h_\pm|^2)$. The identity
\begin{equation}\label{3.4}
\di \hs_\e=J_\e^\pm \di x', \quad J_\e^\pm:=\sqrt{1+\e^2 |\nabla_{x'}
h_\pm|^2}, \quad \di x'=\di x_1 \di x_2\cdots \di x_n,
\end{equation}
holds true.
\end{lemma}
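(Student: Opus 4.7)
The plan is to identify $\mathrm{Q}_\pm$ as a rank-one matrix, read off its spectrum, and then apply the standard area formula for a graph.

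First I would note that by the factorization $\mathrm{Q}_\pm=(\nabla_{x'}h_\pm)(\nabla_{x'}h_\pm)^*$ given in (\ref{3.3}), the matrix $\mathrm{Q}_\pm$ is a symmetric outer product of the column vector $\nabla_{x'}h_\pm$ with itself. Any such matrix $vv^*$ has rank at most one, so its kernel is the hyperplane $v^\perp$ of dimension $n-1$, giving the eigenvalue $0$ with multiplicity $n-1$. The single remaining eigenvector is $v$ itself, since $(vv^*)v = v(v^*v) = |v|^2 v$, yielding the simple eigenvalue $|v|^2 = |\nabla_{x'}h_\pm|^2$. Adding the identity $\mathrm{E}$ to $\e^2\mathrm{Q}_\pm$ shifts all eigenvalues by $1$, so $\mathrm{G}_\pm=\mathrm{E}+\e^2\mathrm{Q}_\pm$ has $1$ with multiplicity $n-1$ and the simple eigenvalue $1+\e^2|\nabla_{x'}h_\pm|^2$, as claimed.

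For the area element I would invoke the standard fact that the Riemannian volume form on a parametrized hypersurface is $\sqrt{\Det \mathrm{G}_\pm}\,\di x'$. Since the determinant equals the product of the eigenvalues, the previous step gives
\begin{equation*}
\Det\mathrm{G}_\pm = \underbrace{1\cdot 1\cdots 1}_{n-1\text{ times}}\cdot\bigl(1+\e^2|\nabla_{x'}h_\pm|^2\bigr)=1+\e^2|\nabla_{x'}h_\pm|^2,
\end{equation*}
from which (\ref{3.4}) follows immediately with $J_\e^\pm=\sqrt{1+\e^2|\nabla_{x'}h_\pm|^2}$.

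There is no genuine obstacle here; the only thing to be careful about is the rank-one eigenvalue computation for $\mathrm{Q}_\pm$, which is elementary linear algebra. Everything else is a direct consequence of the identification (\ref{3.3}) already derived above the lemma and the Jacobian formula for a graph over $\om$.
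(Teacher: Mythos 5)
Your proposal is correct and follows essentially the same route as the paper: both arguments exploit the rank-one structure of $\mathrm{Q}_\pm=(\nabla_{x'}h_\pm)(\nabla_{x'}h_\pm)^*$, identify the eigenvalue $1$ on the orthogonal complement of $\nabla_{x'}h_\pm$ and the simple eigenvalue $1+\e^2|\nabla_{x'}h_\pm|^2$ on its span, and then obtain $J_\e^\pm$ as the square root of the determinant, i.e.\ of the product of the eigenvalues. The only cosmetic difference is that you shift the spectrum of $\e^2\mathrm{Q}_\pm$ by the identity, while the paper writes the eigenvalue equation $(z-1)u=\e^2 vv^*u$ directly; the paper also notes the degenerate case $\nabla_{x'}h_\pm=0$, which in your version is harmless since then the two eigenvalues coincide.
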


\begin{proof}
From~(\ref{3.3}) we may write the eigenvalue problem for the matrix $G_{\pm}$ as
\[
\left(\mathrm{E}+\e^{2}v v^{*}
\right)u = z u,
\]
and
\[
(z-1)u=\e^{2}v v^{*}u,
\]
where $v=\nabla_{x'} h_{\pm}$. We thus see that any vector orthogonal to
$v$ is an eigenvector for the above equation with eigenvalue $z$ equal
to one. This yields an eigenvalue of multiplicity $n-1$ if $v$ is not zero,
and $n$ in case $v$ vanishes. In the former case, we easily see that $v$ is
also an eigenvector, now with eigenvalue $1+\e^2 |v|^2$, which will have
multiplicity one. The determinant of $G_{\pm}$ is thus $g^{\pm}=1+\e^2 |v|^2$,
yielding the volume element to be $\sqrt{1+\e^2 |v|^2}$ as desired.
\end{proof}

In what follows we shall make use of the differential expression for the operator $\He$, namely, its expansion w.r.t. $\e$. The expression itself is given by (\ref{2.3a}),
while using (\ref{3.3}) allows us to expand some of the terms in this expression in powers of $\e$,
\begin{align*}
&(\mathrm{E}+\e^2\mathrm{Q}_\pm)^{-1}=\mathrm{E}-\e^2 \mathrm{Q}_\pm+\Odr(\e^4),
\\
&(1+\e^2|\nabla_{x'} h_\pm|^2)^{\pm\frac{1}{2}}=1\pm \e^2\frac{|\nabla_{x'}h_\pm|^2}{2}
+\Odr(\e^4),
\end{align*}
where the plus and minus signs correspond to the upper and lower parts of $S_\e$, respectively. We substitute these formulas into (\ref{2.3a}) and get
\begin{equation}\label{5.1}
\He=-\D_{x'}-\e^2 \left(\frac{|\nabla_{x'}h_\pm|^2}{2}\D_{x'}+ \Div_{x'} \left(\frac{|\nabla_{x'}h_\pm|^2}{2}-\mathrm{Q}_\pm
\right)\nabla_{x'}
\right)+\Odr(\e^4).
\end{equation}

The disadvantage of the parametrization by the variables $x'$ is that the functions $h_\pm$ are not smooth in a vicinity of $\p\om$ and their derivatives blow-up at the boundary $\p\om$.
We shall show it below while introducing the second parametrization. The main idea of the second parametrization is to use special coordinates in a vicinity of $\p\om$ so that they involve  smooth functions only; this parametrization is purely local and will be used only in a vicinity of $\p\om$. It is natural to expect the existence of such coordinates since the surface $\hs_\e$ is infinitely differentiable.

\subsection{Second parametrization of $\hs_\e$}

In a neighborhood of $\p\om$ we introduce new coordinates
$(\tau,s)$, where $s=(s_1,\ldots,s_{n-1})$ are local coordinates on
$\p\om$ corresponding to a $C^\infty$-atlas, and $\tau$, we remind, is the distance to a point measured in the direction of the inward normal $\nu=\nu(s)$ to $\p\om$. Let $r=r(s)$ be the
vector-function describing $\p\om$.   
We have
\begin{equation}\label{3.8}
\begin{aligned}
& x'=r(s)+\tau\nu(s),\quad \nabla_{(\tau,s)}=
\mathrm{M}(\tau,s)\nabla_{x'},
\\
&\mathrm{M}=\mathrm{M}(\tau,s)=
\begin{pmatrix}
\nu
\\
\frac{\p r}{\p s_1}+\tau \frac{\p\nu}{\p s_1}
\\
\ldots
\\
\frac{\p r}{\p s_{n-1}}+\tau \frac{\p\nu}{\p s_{n-1}}
\end{pmatrix},
\end{aligned}
\end{equation}
where $\nu(s)$ and the other vectors in the definition of $M$ are
treated as columns. The vectors $\frac{\p r}{\p s_i}$ are tangential
to $M$ and linear independent, while $\nu(s)$ is orthogonal to
$\p\om$. Thus, the matrix $M$ is invertible for all sufficiently
small $\tau$ and all $s\in\p\om$. The inequalities
\begin{equation}\label{3.9}
C_1\leqslant \mathrm{M}(\tau,s)\leqslant C_2,\quad C_2^{-1}\leqslant
\mathrm{M}^{-1}(\tau,s)\leqslant C_1^{-1},\quad s\in\p\om,\quad
\tau\in[-\tau_0,\tau_0],
\end{equation}
are valid, where $C_1$, $C_2$ are positive constants independent of $(\tau,s)$.
It follows from these estimates and (\ref{3.8}) that the matrix
$M^{-1}(\tau,s)$ is infinitely differentiable in the neighbourhood
$\{x: |\tau|<\tau_0\}$ of $\p\om$.

Consider now equations (\ref{1.2}). By assumption~(\ref{A2}) they
have the smooth solution $\tau=a(x_{n+1},P)$ and, for small
$x_{n+1}$, the function $a$ behaves as
\begin{equation*}
a(x_{n+1},P)=a_2(P)x_{n+1}^2+\Odr(x_{n+1}^3).
\end{equation*}
Hence,
\begin{align}
&h_\pm\big(P+\tau \nu(P)\big)=x_{n+1}=\pm
a_2^{-\frac{1}{2}}(P)\tau^{\frac{1}{2}}+\Odr(\tau),\quad
\tau\to+0,\nonumber
\\
&\nabla_{x'} h_\pm=\mathrm{M}^{-1}\nabla_{(\tau,s)} h_\pm,\nonumber
\\
&C_3\tau^{-1}\leqslant |\nabla_{x'} h_\pm|^2\leqslant C_4
\tau^{-1},\quad \tau\in
(
0,\tau_0],\label{3.12}
\end{align}
where $C_3$, $C_4$ are positive constants independent of $(\tau,s)$. As we see from the last estimates, the functions $h_\pm$ are not smooth at the point $\tau=0$, i.e., at $\p\om$.

 We employ once again assumption
(\ref{A2}) and pass from equations $x_{n+1}=\pm\e h_\pm(x')$ to
\begin{equation}\label{4.8}
\tau=a(t,P),\quad x_{n+1}=\e t,\quad x'=r(s)+\tau \nu(s).
\end{equation}
It follows from (\ref{1.4})
that the function $a(t,P)$ can be represented as $t^2
\widetilde{a}(t,P)$, where $\widetilde{a}\in
C^\infty([-t_0,t_0]\times\p\om)$ and $\widetilde{a}>0$ for
sufficiently small $t_0$.

We introduce a new variable $\z=t \widetilde{a}^{\frac{1}{2}}(t,P)$.
>From assumption~(\ref{A2}) we conclude that
\begin{equation}\label{4.9a}
t=b(\z,P)\in C^\infty([-\z_0,\z_0]\times\p\om)
\end{equation}
for a fixed small constant $\z_0$, and the Taylor series for $a$ and $b$ read as follows,
\begin{align}
&a(t,P)=\sum\limits_{i=2}^{\infty} a_i(P)t^i,\quad t\to+0,
\label{4.9}
\\
&b(\z,P)=\sum\limits_{i=1}^{\infty} b_i(P)\z^i,\quad \z\to0,
\qquad b_1:=a_2^{-\frac{1}{2}},
\label{4.52}
\end{align}
where $a_i, b_i\in C^\infty(\p\om)$. We define a rescaled variable $\xi:=\z\e^{-1}$.
The final form of the second parametrization for $\hs_\e$ is as follows,
\begin{equation}\label{4.54}
x'=r(s)+\e^2\xi^2 \nu(s),\quad x_{n+1}=\e^2 b_\e(\xi,r(s)),\quad
\xi\in[-\z_0\e^{-1},\z_0\e^{-1}],
\end{equation}
where $b_\e(\xi,P):=\e^{-1}b(\e\xi,P)$ and $\z_0$ is a fixed
sufficiently small number. We observe that by the definition of $\z$
\begin{equation}\label{4.55}
\tau=a(t,P)=\z^2=\e^2\xi^2.
\end{equation}

As in (\ref{5.1}), we shall also employ the expansion in $\e$ of the differential expression for $\He$ corresponding to the second parametrization. We find first the tangential vectors to $S_\e$ corresponding to the
parametrization (\ref{4.54}),
\begin{equation}\label{4.15}
T_{s_i}=\left(\frac{\p r}{\p s_i}+\e^2\xi^2  \frac{\p\nu}{\p
s_i},\e^2\frac{\p b_\e}{\p s_i}\right),\quad T_\xi=\e^2\left(2\xi^2
\nu,\frac{\p b_\e}{\p \xi}\right).
\end{equation}
It is clear that the vectors $\frac{\p
r}{\p s_i}$, $\frac{\p \nu}{\p s_i}$ belong to the
tangential plane and are orthogonal to $\nu$. Employing this fact and (\ref{4.15}), we calculate the
metric tensor,
\begin{align*}
&(T_\xi,T_\xi)_{\mathds{R}^{n+1}}=\e^4\left(4\xi^2+\left(\frac{\p
b_\e}{\p\xi}\right)^2\right),\quad
(T_\xi,T_{s_i})_{\mathds{R}^{n+1}}=\e^4\frac{\p b_\e}{\p\xi}\frac{\p
b_\e}{\p s_i},
\\
&(T_{s_i},T_{s_j})_{\mathds{R}^{n+1}}=\left(\frac{\p r}{\p
s_i}+\e^2\xi^2\frac{\p\nu}{\p s_i}, \frac{\p r}{\p
s_j}+\e^2\xi^2\frac{\p\nu}{\p
s_j}\right)_{\mathds{R}^{n+1}}+\e^4\frac{\p b_\e}{\p s_i}\frac{\p
b_\e}{\p s_j}.
\end{align*}
By Weingarten equations  we see that 
\begin{equation*}
\big((T_{s_i},T_{s_j})_{\mathds{R}^{n+1}}\big)_{i,j=\overline{1,n}}
=\mathrm{A},
\end{equation*}
where
\begin{equation}\label{4.18}
\begin{aligned}
\mathrm{A}:=&\mathrm{G}_{\p\om}
-2\e^2\xi^2\mathrm{B}+\e^4\xi^4\mathrm{B}\mathrm{G}_{\p\om}^{-1}\mathrm{B}+
\e^4(\nabla_s b_\e)(\nabla_s b_\e)^*
\\
=&\mathrm{G}_{\p\om}(\mathrm{E}-\e^2\xi^2
\mathrm{G}_{\p\om}^{-1}\mathrm{B})^2+\e^4(\nabla_s b_\e)(\nabla_s
b_\e)^*,
\end{aligned}
\end{equation}
$\mathrm{G}_{\p\om}$ is the metric tensor of $\p\om$ associated with the coordinates $s$, $\mathrm{B}$ is the second fundamental form of $\p\om$ corresponding to the orientation defined by $\nu$. Hence, the metric tensor $\mathrm{G}_\e$ of $S_\e$ associated with the parametrization (\ref{4.54}) reads as follows,
\begin{equation*}
\mathrm{G}_\e=
\begin{pmatrix}
\e^4\Big(4\xi^2+\big(\frac{\p b_\e}{\p\xi}\big)^2\Big) &
\e^4\mathrm{p}^*
\\
\e^4\mathrm{p} & \mathrm{A}
\end{pmatrix},
\quad  \mathrm{p}:= \frac{\p b_\e}{\p \xi}\nabla_s b_\e.
\end{equation*}
By direct calculations we check that
\begin{align}
&\mathrm{G}_\e^{-1}=
\begin{pmatrix}
\e^{-4}\b & -\b \mathrm{p}^*\mathrm{A}^{-1}
\\
\\
-\b \mathrm{A}^{-1} \mathrm{p} & \mathrm{A}^{-1}+\e^4\b
\mathrm{A}^{-1}\mathrm{p} \mathrm{p}^* \mathrm{A}^{-1}
\end{pmatrix}\label{4.17}
\\
&\b:=\left( 4\xi^2+\left(\frac{\p b_\e}{\p\xi}\right)^2
-\e^4\mathrm{p}^* \mathrm{A}^{-1}\mathrm{p}\right)^{-1}.\nonumber
\end{align}
The quantities in (\ref{4.17}) are well-defined provided $\z_0$ is
sufficiently small. Indeed, by (\ref{4.9})
\begin{equation*}
\mathrm{A}=\mathrm{G}_{\p\om}+\Odr(\z^2), \quad
\mathrm{p}=\Odr(1),\quad  \frac{\p b}{\p \z}(\z,P)=\Odr(1),\quad
\z\to0,
\end{equation*}
that implies the existence of $\mathrm{A}^{-1}$ and $\b$. In what
follows we assume that $\z_0$ is chosen in such a way.

By $K_i=K_i(s)$, $i=1,\ldots,n-1$, we denote the principal
curvatures of $\p\om$, and $K:=\sum\limits_{i=1}^{n-1} K_i$. We note that $(n-1)^{-1}K$ is the mean curvature of $\p\om$ and
let
\begin{equation*}
\mathrm{a}:=\Det \big((\mathrm{E}-\e^2\xi^2
\mathrm{G}_{\p\om}^{-1}\mathrm{B})^2+\e^4\mathrm{G}_{\p\om}^{-1}(\nabla_s
b_\e)(\nabla_s b_\e)^*\big).
\end{equation*}

\begin{lemma}\label{lm4.1}
The identities
\begin{align}
&
\begin{aligned}
b_\e=\sum\limits_{i=1}^{\infty} b_i(P)\e^{i-1}\xi^i, \quad
\mathrm{A}^{-1}=\mathrm{G}_{\p\om}^{-1}+\Odr(\e^2\xi^2),\quad
\mathrm{p}=\xi b_1\nabla_s b_1 +\Odr(\e\xi^2),
\end{aligned}\label{4.36}
\\
&\Det \mathrm{G}_\e=\e^4\b^{-1}\Det \mathrm{A},\label{4.28a}
\\
&
\begin{aligned}
& \Det \mathrm{A}=\mathrm{a}\Det  \mathrm{G}_{\p\om} ,\quad
\mathrm{a}=\sum\limits_{i=0}^{2} \e^{2i}\a_{2i}
+\Odr(\e^4\xi^4),
\end{aligned}
\label{4.28b}
\\
&\a_0:=1,
\quad\a_2:=-2\xi^2 K.
\label{4.27a}
\end{align}
hold
true.
\end{lemma}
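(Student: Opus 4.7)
The plan is to verify the four groups of identities by direct calculation, exploiting the series (\ref{4.9}), (\ref{4.52}) and the block structure of $\mathrm{G}_\e$. None of the steps is deep; the only issue is to keep a careful track of the orders in $\e$ and $\xi$.

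\textbf{Step 1: the expansions (\ref{4.36}).} The formula for $b_\e$ is immediate: I would substitute $\z=\e\xi$ into the Taylor series (\ref{4.52}) and divide by $\e$, using the definition $b_\e(\xi,P)=\e^{-1}b(\e\xi,P)$. From this series I read off
\begin{equation*}
\frac{\p b_\e}{\p\xi}=b_1(P)+\Odr(\e\xi),\qquad \nabla_s b_\e=\xi\nabla_s b_1+\Odr(\e\xi^2),
\end{equation*}
and substituting into $\mathrm{p}=\frac{\p b_\e}{\p\xi}\nabla_s b_\e$ gives the claimed expansion of $\mathrm{p}$. For $\mathrm{A}^{-1}$ I use (\ref{4.18}): expanding the square yields $\mathrm{A}=\mathrm{G}_{\p\om}+\Odr(\e^2\xi^2)$ (the $\e^4(\nabla_s b_\e)(\nabla_s b_\e)^*$ correction is of order $\e^4\xi^2$, which is absorbed), so inverting by the Neumann series $(\mathrm{G}_{\p\om}+R)^{-1}=\mathrm{G}_{\p\om}^{-1}-\mathrm{G}_{\p\om}^{-1}R\mathrm{G}_{\p\om}^{-1}+\cdots$ produces $\mathrm{A}^{-1}=\mathrm{G}_{\p\om}^{-1}+\Odr(\e^2\xi^2)$.

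\textbf{Step 2: the determinant identity (\ref{4.28a}).} This is a direct application of the Schur complement formula for a block matrix. Writing
\begin{equation*}
\mathrm{G}_\e=\begin{pmatrix} \e^4\Big(4\xi^2+\big(\frac{\p b_\e}{\p\xi}\big)^2\Big) & \e^4\mathrm{p}^*\\ \e^4\mathrm{p} & \mathrm{A}\end{pmatrix},
\end{equation*}
one has $\Det \mathrm{G}_\e=\Det \mathrm{A}\cdot\Big(\e^4\big(4\xi^2+(\p_\xi b_\e)^2\big)-\e^8\mathrm{p}^*\mathrm{A}^{-1}\mathrm{p}\Big)$. Factoring $\e^4$ from the parenthesis reproduces exactly $\e^4\b^{-1}$ by the definition of $\b$, giving (\ref{4.28a}).

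\textbf{Step 3: the factorisation (\ref{4.28b}) and $\a_0,\a_2$.} Starting from the second form of $\mathrm{A}$ in (\ref{4.18}), I factor $\mathrm{G}_{\p\om}$ on the left:
\begin{equation*}
\mathrm{A}=\mathrm{G}_{\p\om}\Big[(\mathrm{E}-\e^2\xi^2\mathrm{G}_{\p\om}^{-1}\mathrm{B})^2+\e^4\mathrm{G}_{\p\om}^{-1}(\nabla_s b_\e)(\nabla_s b_\e)^*\Big],
\end{equation*}
so $\Det \mathrm{A}=\mathrm{a}\,\Det \mathrm{G}_{\p\om}$ with $\mathrm{a}$ as defined. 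To expand $\mathrm{a}$ I square the matrix in the bracket and use the formula $\Det(\mathrm{E}+X)=1+\Tr X+\tfrac12\big((\Tr X)^2-\Tr(X^2)\big)+\cdots$. Setting $\e=0$ makes the bracket equal to $\mathrm{E}$, hence $\a_0=1$. The coefficient of $\e^2$ comes solely from the linear term $-2\e^2\xi^2\,\mathrm{G}_{\p\om}^{-1}\mathrm{B}$ (the $(\nabla_s b_\e)(\nabla_s b_\e)^*$ piece is $\Odr(\e^4)$), so
\begin{equation*}
\a_2=-2\xi^2\,\Tr(\mathrm{G}_{\p\om}^{-1}\mathrm{B})=-2\xi^2\sum_{i=1}^{n-1}K_i=-2\xi^2 K,
\end{equation*}
since the eigenvalues of the shape operator $\mathrm{G}_{\p\om}^{-1}\mathrm{B}$ are the principal curvatures $K_i$. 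All remaining contributions to $\mathrm{a}$ are of order $\e^4$ with at least four powers of $\xi$ in the dominant terms from $(\mathrm{G}_{\p\om}^{-1}\mathrm{B})^2$ (and $\Odr(\e^4\xi^2)$ from the $(\nabla_s b_\e)(\nabla_s b_\e)^*$ piece, which is packaged into $\a_4$); the true error bound $\Odr(\e^4\xi^4)$ then comes from the cubic and higher terms of the determinant expansion.

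\textbf{Expected difficulty.} The proof is almost entirely bookkeeping; the only place where one has to be attentive is Step~3, where one must distinguish between contributions that belong to the explicit coefficients $\a_0,\a_2$ and those that should be hidden in $\a_4$ or in the error $\Odr(\e^4\xi^4)$. The identification $\Tr(\mathrm{G}_{\p\om}^{-1}\mathrm{B})=K$ is the only conceptual input, provided by the Weingarten equations already invoked when writing (\ref{4.18}).
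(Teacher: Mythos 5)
Your proposal is correct and follows essentially the same route as the paper: (\ref{4.36}) read off from the definitions, the block-determinant identity for (\ref{4.28a}), and the factorisation of $\Det\mathrm{A}$ with $\Tr(\mathrm{G}_{\p\om}^{-1}\mathrm{B})=\sum_i K_i=K$ for (\ref{4.28b})--(\ref{4.27a}). The only cosmetic difference is in (\ref{4.28a}), where you apply the Schur-complement formula directly to $\mathrm{G}_\e$ while the paper performs a block row reduction on the explicit inverse (\ref{4.17}); the two computations are equivalent.
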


\begin{proof}
The identities (\ref{4.36}) follow directly from the definition of
$b_\e$, $\mathrm{A}$, and $\mathrm{p}$.

We make linear transformations in (\ref{4.17}) to calculate the
determinant of $\mathrm{G}_\e$,
\begin{align*}
(\Det \mathrm{G}_\e)^{-1}=\Det^{-1} \mathrm{\mathrm{G}}_\e=
\begin{vmatrix}
\e^{-4}\b & -\b \mathrm{p}^* \mathrm{A}^{-1}
\\
0 & \mathrm{A}^{-1}
\end{vmatrix}=\e^{-4}\b\Det^{-1}\mathrm{A}
\end{align*}
that proves (\ref{4.28a}).

It is  easy to see that
\begin{equation}\label{4.27}
\Det \mathrm{A}=\mathrm{a}\Det \mathrm{G}_{\p\om}.
\end{equation}
In view of (\ref{4.18}) we get 
\begin{align*}
\mathrm{a} &= \Det\big(\mathrm{E} +\e^4
(\mathrm{E}-\e^2\xi^2 \mathrm{G}_{\p\om}^{-1}\mathrm{B})^{-2}
\mathrm{G}_{\p\om}^{-1}(\nabla_s b_\e)(\nabla_s b_\e)^*\big) \Det (\mathrm{E}-\e^2\xi^2
\mathrm{G}_{\p\om}^{-1}\mathrm{B})^2
\\
&=\big(1+\e^4\Tr  (\mathrm{E}-\e^2\xi^2
\mathrm{G}_{\p\om}^{-1}\mathrm{B})^{-2} \mathrm{G}_{\p\om}^{-1}
(\nabla_s b_\e)(\nabla_s b_\e)^*  +\Odr(\e^8\xi^2)\big)
\prod\limits_{i=1}^{n-1}(1-\e^2\xi^2 K_i)^2
\\
&=\big(1+\e^4\Tr \mathrm{G}_{\p\om}^{-1} (\nabla_s b_\e)(\nabla_s
b_\e)^*+\Odr(\e^6\xi^4)\big)\big( 1-2\e^2\xi^2 K +\Odr(\e^4\xi^4)\big)
\\
&= \big(1+\e^4 |\nabla b_\e|^2+\Odr(\e^6\xi^4)\big)\big(
1-2\e^2\xi^2 K+\Odr(\e^4\xi^4)\big).
\end{align*}
We substitute the obtained formula and (\ref{4.52}) into
(\ref{4.27}) and arrive at (\ref{4.28b}).
\end{proof}

Employing (\ref{4.18}), (\ref{4.36}), by direct calculations we
check
\begin{align*}
\mathrm{p}^* \mathrm{A}^{-1} \mathrm{p}=& \left( \frac{\p
b_\e}{\p\xi}\right)^2 (\nabla_s b_\e)^* \mathrm{G}_{\p\om}^{-1}
(\nabla_s b_\e)+\Odr(\e^2\xi^2)
\\
=&\left( \frac{\p b_\e}{\p\xi}\right)^2| \nabla
b_\e|^2+\Odr(\e^2\xi^2)
\\
=&b_1^2\xi^2|\nabla b_1|^2+\Odr(\e\xi^2).
\end{align*}

Hence, by (\ref{4.28a}), (\ref{4.28b}) and the definition of $\b$
\begin{align*}
&\e^{-2}\Det^{\frac{1}{2}} \mathrm{G}_\e=
\b^{-\frac{1}{2}}\Det^{\frac{1}{2}}\mathrm{A}= \b^{-1}\b_\mathrm{A} \Det^{\frac{1}{2}}
\mathrm{G}_{\p\om},
\\
&\b_\mathrm{A}:=\b^{\frac{1}{2}}\mathrm{a}^{\frac{1}{2}}=\sum\limits_{i=0}^{4}\e^i
\b_{i-4}+\Odr\big(\e^5(|\xi|^2+\xi^4)\big),
\end{align*}
where $\b_i=\b_i(\xi,P)\in C^\infty(\mathds{R}\times\p\om)$ are some functions. In particular,
\begin{equation}
\begin{aligned}
\b_{-4}:=&\frac{1}{(4\xi^2+ b_1^2)^{\frac{1}{2}}}, \quad \b_{-3}:=
-\frac{2 b_1 b_2\xi}{(4\xi^2+ b_1^2)^{\frac{3}{2}}},
\\
\b_{-2}:=& -\frac{3 b_1 b_3 \xi^2}{(4\xi^2+ b_1^2)^{\frac{3}{2}}}
-\frac{4\xi^2(2\xi^2-b_1^2)b_2^2}{(4\xi^2+b_1^2)^{\frac{5}{2}}}
-\frac{\xi^2 K}{(4\xi^2+b_1^2)^{\frac{1}{2}}},
\end{aligned}\label{4.41}
\end{equation}
while the function $\b_{-1}$, $\b_0$ satisfy the uniform in $\xi$
and $P$ estimates
\begin{equation*}
|\b_{-1}|\leqslant \frac{C|\xi|^3}{1+|\xi|^3},\quad |\b_0|\leqslant
C\xi^2(1+|\xi|).
\end{equation*}
The obtained formulas, Lemma~\ref{lm4.1}, and (\ref{4.17}) allow us to write the expansion for $\mathrm{G}_\e^{-1}$,
\begin{align}
&\e^{-2}(\Det^{\frac{1}{2}} \mathrm{G}_\e)\mathrm{G}_\e^{-1}=
\Det^{\frac{1}{2}}\mathrm{G}_{\p\om}
\sum\limits_{i=-4}^{0}\e^i\mathrm{G}_i +\Odr(\e), \label{4.37}
\\
&
\begin{aligned}
&\mathrm{G}_{i}:=
\begin{pmatrix}
\b_i & 0
\\
0 & 0
\end{pmatrix},\quad i=-4,\ldots,-1,
\\
&\mathrm{G}_0:=
\begin{pmatrix}
\b_0 & - b_1 \xi  \b_{-4} (\nabla_s b_1)^* \mathrm{G}_{\p\om}^{-1}
\\
-b_1 \xi  \b_{-4}  \mathrm{G}_{\p\om}^{-1} \nabla_s b_1 &
\b_{-4}^{-1} \mathrm{G}_{\p\om}^{-1}
\end{pmatrix}.
\end{aligned}\label{4.38}
\end{align}
Taking into account (\ref{4.28a}), (\ref{4.28b}), we write the
operator $\mathcal{H}_\e$ in terms of the variables $(s_0,s)$, where
$s_0:=\xi$,
\begin{equation}\label{4.37a}
\begin{aligned}
\mathcal{H}_\e=&-\frac{1}{\Det^{\frac{1}{2}}\mathrm{G}_\e}
\sum\limits_{i,j=0}^{n-1} \frac{\p}{\p s_i} G_\e^{ij}
\Det^{\frac{1}{2}} \mathrm{G}_\e\frac{\p}{\p s_j}
\\
=&-\frac{\e^{-2}\b_\mathrm{A}}{\mathrm{a} \Det^{\frac{1}{2}}\mathrm{G}_{\p\om}
} \sum\limits_{i,j=0}^{n-1} \frac{\p}{\p s_i} G_\e^{ij}
\Det^{\frac{1}{2}} \mathrm{G}_\e\frac{\p}{\p s_j},
\end{aligned}
\end{equation}
and $G_\e^{ij}$ are the entries of the inverse matrix (\ref{4.17}).
It follows from the last formula and (\ref{4.17}) that
\begin{equation*}
\mathcal{H}_\e=\e^{-4}\mathrm{a}^{-1} \b_\mathrm{A} \frac{\p}{\p\xi}
\b_\mathrm{A} \frac{\p}{\p\xi}+\Odr(1).
\end{equation*}
We employ the obtained equation, (\ref{4.37a}), (\ref{4.37}) and
(\ref{4.38}), and expand the coefficients of $\mathcal{H}_\e$ in
powers of $\e$ leading us to the identities
\begin{align}
&\mathcal{H}_\e= \sum\limits_{i=-4}^0 \e^i \mathcal{L}_i+\Odr(\e),
\label{4.32}
\\
&
\begin{aligned}
& \mathcal{L}_{-4}:=\mathcal{L}^{(-4)}, \quad
\mathcal{L}_{-3}:=\mathcal{L}^{(-3)}, \quad
\mathcal{L}_{-2}:=\mathcal{L}^{(-2)}+\a^{(2)} \mathcal{L}^{(-4)},
\\
&\mathcal{L}_{-1}:=\mathcal{L}^{(-1)}+\a^{(2)} \mathcal{L}^{(-3)},
\quad \mathcal{L}_{0}:=\mathcal{L}^{(0)}+\a^{(2)}
\mathcal{L}^{(-2)}+\a^{(4)}\mathcal{L}^{(-4)},
\\
&\a^{(2)}:=2\xi^2 K,\quad \a^{(4)}=\a^{(4)}(\xi,s),
\end{aligned}\label{4.39a}
\\
& \mathcal{L}^{(i)}:=-\sum\limits_{j=0}^{i+4} \b_{j-4}
\frac{\p}{\p\xi} \b_{i-j} \frac{\p}{\p\xi},\quad
i=-4,\ldots,-1,\label{4.40a}
\\
&
\begin{aligned}
\mathcal{L}^{(0)}:=& -\sum\limits_{l=0}^{4} \b_{l-4}
\frac{\p}{\p\xi} \b_{-l} \frac{\p}{\p\xi} +  b_1\b_{-4}\frac{\p}{\p\xi}
\xi\b_{-4} (\nabla_s b_1)^* \mathrm{G}_{\p\om}^{-1}\nabla_s
\\
&+ \b_{-4}\Det^{-\frac{1}{2}} \mathrm{G}_{\p\om} \Div_s
b_1\b_{-4}\xi \Det^{\frac{1}{2}} \mathrm{G}_{\p\om} (\nabla_s b_1)^*
\mathrm{G}_{\p\om}^{-1} \frac{\p}{\p\xi}
\\
&- \b_{-4}\Det^{-\frac{1}{2}} \mathrm{G}_{\p\om} \Div_s\b_{-4}^{-1}
(\Det^{\frac{1}{2}} \mathrm{G}_{\p\om}) \mathrm{G}_{\p\om}^{-1}
\nabla_s.
\end{aligned}  \label{4.41b}
\end{align}

\subsection{Auxiliary lemmas}

We proceed to the auxiliary lemmas which will be used for proving Theorem~\ref{th2.3}.

\begin{lemma}\label{lm4.2}
In a vicinity of $\p\om$ the identities
\begin{align}
&\Det \mathrm{M}=(\Det^{\frac{1}{2}} \mathrm{G}_{\p\om})\prod\limits_{i=1}^{n-1} (1-\tau K_i), \label{4.23}
\\
&-\D_{x'}=-\frac{1}{\Det \mathrm{M}} \Div_{(\tau,s)} (\Det \mathrm{M}) \widehat{\mathrm{M}} \nabla_{(\tau,s)} \nonumber
\end{align}
hold true, where
\begin{equation}\label{4.29a}
\widehat{\mathrm{M}}:=(\mathrm{M}^{-1})^*  \mathrm{M}^{-1}=
\begin{pmatrix}
1 & 0
\\
0 & (\mathrm{E}-\tau \mathrm{G}^{-1}_{\p\om}\mathrm{B})^{-2}\mathrm{G}^{-1}_{\p\om}
\end{pmatrix}.
\end{equation}
\end{lemma}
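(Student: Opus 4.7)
\emph{Plan for Lemma~\ref{lm4.2}.} Both identities reduce to a direct block-computation of $\mathrm{M}\mathrm{M}^*$. First I would show that $\mathrm{M}\mathrm{M}^*$ is block-diagonal. The off-diagonal blocks vanish because $\nu\perp \p r/\p s_i$ by definition of the normal and $\nu\perp \p\nu/\p s_i$ by differentiating $|\nu|^2\equiv 1$, so the $(1,i)$ entry reduces to $\nu\cdot(\p r/\p s_i+\tau\,\p\nu/\p s_i)=0$. The upper-left entry is $|\nu|^2=1$.

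For the lower-right block I would invoke the Weingarten equations, which express $\p\nu/\p s_i$ as a linear combination of $\p r/\p s_j$ with coefficients given by the Weingarten matrix $W:=\mathrm{G}_{\p\om}^{-1}\mathrm{B}$. Setting $A:=\mathrm{E}-\tau W$ and letting $T$ denote the $(n-1)\times n$ matrix with rows $(\p r/\p s_i)^*$, the lower block of $\mathrm{M}$ becomes $A^*T$, and hence the lower-right block of $\mathrm{M}\mathrm{M}^*$ equals $A^*TT^*A=A^*\mathrm{G}_{\p\om}A$. The key algebraic simplification is that $\mathrm{G}_{\p\om}W=\mathrm{B}$ is symmetric, which gives $\mathrm{G}_{\p\om}W=W^*\mathrm{G}_{\p\om}$ and therefore $A^*\mathrm{G}_{\p\om}=\mathrm{G}_{\p\om}A$; substituting into the previous expression yields $A^*\mathrm{G}_{\p\om}A=\mathrm{G}_{\p\om}A^2$.

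Taking determinants in this block structure gives $(\Det\mathrm{M})^2=\Det\mathrm{G}_{\p\om}\cdot\Det A^2=\Det\mathrm{G}_{\p\om}\prod_{i=1}^{n-1}(1-\tau K_i)^2$, where I use that the eigenvalues of $W$ are the principal curvatures $K_i$; the sign of $\Det\mathrm{M}$ is positive in a sufficiently small neighbourhood of $\p\om$ thanks to the non-degeneracy bounds (\ref{3.9}), which produces (\ref{4.23}). Inverting the block structure then gives $\widehat{\mathrm{M}}=(\mathrm{M}\mathrm{M}^*)^{-1}=\diag\bigl(1,(\mathrm{G}_{\p\om}A^2)^{-1}\bigr)=\diag\bigl(1,A^{-2}\mathrm{G}_{\p\om}^{-1}\bigr)$, which is exactly (\ref{4.29a}).

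For the second identity I would use the chain rule $\nabla_{x'}=\mathrm{M}^{-1}\nabla_{(\tau,s)}$ to rewrite $|\nabla_{x'}u|^2=(\nabla_{(\tau,s)}u,\widehat{\mathrm{M}}\,\nabla_{(\tau,s)}u)$, and combine it with the Jacobian $\di x'=\Det\mathrm{M}\,\di\tau\,\di s$. Integrating by parts in $(\tau,s)$ yields the displayed expression for $-\D_{x'}$; equivalently, this is just the standard Laplace--Beltrami formula for the metric $g=\mathrm{M}\mathrm{M}^*$ in the curvilinear coordinates $(\tau,s)$, since $\sqrt{\Det g}=\Det\mathrm{M}$ and $g^{-1}=\widehat{\mathrm{M}}$. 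The main obstacle is the algebraic step $A^*\mathrm{G}_{\p\om}A=\mathrm{G}_{\p\om}A^2$ relying on the $\mathrm{G}_{\p\om}$--self-adjointness of $W$; without this identification one would be left with a less pleasant expression for $\widehat{\mathrm{M}}$ in which the transposition $A^*$ would persist in the lower block.
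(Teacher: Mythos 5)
Your proposal is correct and follows essentially the same route as the paper: Weingarten equations plus the orthogonality of $\nu$ to the tangent vectors give the block structure, the symmetry of $\mathrm{B}$ collapses the lower block to $(\mathrm{E}-\tau \mathrm{G}^{-1}_{\p\om}\mathrm{B})^{-2}\mathrm{G}^{-1}_{\p\om}$, the determinant identity follows, and the operator identity comes from the change of variables and integration by parts (the standard Laplace--Beltrami formula). The only cosmetic difference is that you compute $\mathrm{M}\mathrm{M}^*$ and invert the block-diagonal result, whereas the paper writes down $\mathrm{M}^{-1}$ explicitly (its formula (\ref{4.30})) and computes $(\mathrm{M}^{-1})^*\mathrm{M}^{-1}$; your variant spares you verifying the explicit inverse, at the price of making the commutation $A^*\mathrm{G}_{\p\om}=\mathrm{G}_{\p\om}A$ the visible key step.
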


\begin{proof}
It follows from (\ref{3.8}) and the Weingarten formulas that
\begin{equation*}
\mathrm{M}=
\begin{pmatrix}
\nu
\\
\frac{\p r}{\p s_i}-\tau\sum\limits_{k=1}^{n-1} B_i^k \frac{\p r}{\p
s_k}
\end{pmatrix},
\end{equation*}
where $B_i^k$ are the entries of the matrix
$\mathrm{G}_{\p\om}^{-1}\mathrm{B}$, and all vectors are treated as
rows.

A straightforward direct calculation allows us to check that the inverse matrix
$\mathrm{M}^{-1}$ reads as follows,
\begin{equation}\label{4.30}
\mathrm{M}^{-1}=
\begin{pmatrix}
\nu
\\
\sum\limits_{k=1}^{n-1} c_i^k \frac{\p r}{\p s_k}
\end{pmatrix}^*,
\end{equation}
where, as before, $^*$ indicates matrix transposition, and
$c_i^k$ are the entries of the matrix $\mathrm{C}=(\mathrm{E}-\tau
\mathrm{G}_{\p\om}^{-1} \mathrm{B})^{-1}\mathrm{G}_{\p\om}^{-1}$.

Let $u_1, u_2\in C_0^\infty(\om)$ be any two functions with the corresponding
supports located in a neighbourhood of $\p\om$, where the coordinates
$(\tau,s)$ are well-defined. We integrate by parts,
\begin{align*}
(-\D_{x'}u,v)_{L_2(\om)}&=(\nabla_{x'}u,\nabla_{x'}v)_{L_2(\om)} =
(\mathrm{M}^{-1}\nabla_{(\tau,s)}u,(\Det \mathrm{M})
\mathrm{M}^{-1}\nabla_{(\tau,s)}v )_{L_2((0,\tau_0)\times\p\om)}
\\
&=\big(-\Div_{(\tau,s)}(\Det \mathrm{M})(\mathrm{M}^{-1})^*
(\mathrm{M}^{-1})\nabla_{(\tau,s)}u, v\big)_{L_2((
0,\tau_0)\times\p\om)}
\\
&= \big(-(\Det^{-1}\mathrm{M})\Div_{(\tau,s)}(\Det
\mathrm{M})(\mathrm{M}^{-1})^*  \mathrm{M}^{-1} \nabla_{(\tau,s)}u,
v\big)_{L_2(\om)}.
\end{align*}
Hence,
\begin{equation}\label{4.34}
-\D_{x'}=- (\Det^{-1}\mathrm{M})\Div_{(\tau,s)}(\Det
\mathrm{M})(\mathrm{M}^{-1})^*  \mathrm{M}^{-1} \nabla_{(\tau,s)}.
\end{equation}
In view of (\ref{4.30}) we have
\begin{align*}
(\mathrm{M}^{-1})^*  \mathrm{M}^{-1}&=
\begin{pmatrix} \nu
\\
\sum\limits_{k=1}^{n-1} c_i^k \frac{\p r}{\p s_k}
\end{pmatrix}
\begin{pmatrix}
\nu
\\
\sum\limits_{k=1}^{n-1} c_i^k \frac{\p r}{\p s_k}
\end{pmatrix}^*=
\begin{pmatrix}
1 & 0
\\
0 & \mathrm{C} \mathrm{G}_{\p\om} \mathrm{C}
\end{pmatrix}
\\
&=
\begin{pmatrix}
1 & 0
\\
0 & (\mathrm{E}-\tau \mathrm{G}^{-1}_{\p\om}\mathrm{B})^{-2}\mathrm{G}^{-1}_{\p\om}
\end{pmatrix},
\\
\Det^{-2} \mathrm{M} =&\Det (\mathrm{M}^{-1})^* \mathrm{M}^{-1} =
\Det (\mathrm{E}-\tau \mathrm{G}^{-1}_{\p\om}\mathrm{B})^{-2}\Det \mathrm{G}_{\p\om}^{-1},
\\
\Det \mathrm{M}=&\Det^{\frac{1}{2}} \mathrm{G}_{\p\om} \Det (\mathrm{E}-\tau \mathrm{G}_{\p\om}^{-1}
\mathrm{B}) =\Det^{\frac{1}{2}}\mathrm{G}_{\p\om}\prod\limits_{i=1}^{n-1}(1-\tau K_i).
\end{align*}
The obtained formulas and (\ref{4.34}) imply the statement of the
lemma.
\end{proof}

We recall that the set $\om^\d$ was introduced in (\ref{3.12a}).

\begin{lemma}\label{lm4.3}
Let the functions $f_\pm\in C^\infty(\om_\pm)$ satisfy the differentiable asymptotics
\begin{equation}\label{4.78}
f_\pm(x')=\sum\limits_{j=-4}^{\infty} f_{j/2}^\pm(P)\tau^{\frac{j}{2}}, \quad\tau\to+\infty,
\end{equation}
uniformly in $P\in\p\om_\pm$, where $f_{j/2}^\pm\in C^\infty(\p\om_\pm)$, and $V^{(0)}, V^{(1)}\in C^\infty(\p\om)$ are some functions.   Suppose the condition
\begin{equation}\label{4.79}
\begin{aligned}
\lim\limits_{\d\to+0} \Bigg[& (f_+,\psi_+^{(i)})_{L_2(\om^\d)}+ (f_-,\psi_-^{(i)})_{L_2(\om^\d)} - \d^{-1} \int\limits_{\p\om} (f_{-2}^+ + f_{-2}^-)\Psi_i^{(0)}\di s
\\
&- 2\d^{-1/2} \int\limits_{\p\om} (f_{-3/2}^+ + f_{-3/2}^-) \Psi_i^{(0)}\di s
\\
&-\ln \d \int\limits_{\p\om} \Big((K (f_{-2}^+ + f_{-2}^-) - f_{-1}^+ - f_{-1}^-\big)\Psi_i^{(0)} - (f_{-2}^+ - f_{-2}^-)\Psi_i^{(1)}\Big)\di s\Bigg]
\\
&-\int\limits_{\p\om}(f_{-2}^+ - f_{-2}^-) \Psi_i^{(1)}\di s + \int\limits_{\p\om} (f_{-2}^+  + f_{-2}^-) \Psi_i^{(0)} K \di s
\\
&+2\int\limits_{\p\om} \big(V^{(0)} \Psi_i^{(1)}-V^{(1)}\Psi_i^{(0)}\big)\di s=0,\quad i=1,\ldots,m,
\end{aligned}
\end{equation}
holds true. Then there exist the unique solutions $u_\pm\in C^\infty(\om_\pm)$ to the equations
\begin{equation}\label{4.80}
(-\D_{x'}-\l)u_\pm=f_\pm,\quad x\in\om_\pm,
\end{equation}
these solutions satisfy differentiable asymptotics
\begin{equation}\label{4.81}
\begin{aligned}
u_\pm(x')=&f_{-2}^\pm(P)\ln\tau+ U^{(0)}(P)\pm V^{(0)}(P) + 4 f_{-3/2}^\pm(P) \tau^{1/2} +\tau(V^{(1)}(P)\pm U^{(1)}(P))
\\
& + \tau (1-\ln\tau) \big(f_{-1}^\pm(P)-K(P) f_{-2}^\pm(P) \big)
+ \Odr(\tau^{3/2}),\quad \tau\to0,
\end{aligned}
\end{equation}
uniformly in $P\in\p\om_\pm$, where $U^{(0)}, U^{(1)}\in C^\infty(\p\om_\pm)$ are some functions, and the condition
\begin{equation}\label{4.81a}
(U_0,\Psi_i^{(0)})_{L_2(\p\om)}+ (U_1,\Psi_i^{(1)})_{L_2(\p\om)}=0,\quad i=1,\ldots,m,
\end{equation}
holds true.
\end{lemma}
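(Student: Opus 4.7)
The plan is to subtract an explicit singular ansatz near $\p\om$ from $u_\pm$ and reduce the problem to a Fredholm equation for $\Ho-\l$ in $L_2(\bs\om)$.

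First I would construct a near-boundary ansatz
\begin{equation*}
\t u_\pm := \chi(\tau)\Big[f_{-2}^\pm(P)\ln\tau + 4 f_{-3/2}^\pm(P)\tau^{1/2} + \tau(1-\ln\tau)\bigl(f_{-1}^\pm - K f_{-2}^\pm\bigr) \pm V^{(0)}(P) + \tau V^{(1)}(P)\Big],
\end{equation*}
where $\chi$ is a smooth cutoff supported in a tubular neighbourhood of $\p\om$ and $P$ is the foot of the inward normal to $\p\om$ through $x'$. Using Lemma~\ref{lm4.2}, the radial part of $-\D_{x'}$ expands as $-\p_\tau^2+K(s)\p_\tau+\Odr(\tau)\p_\tau$ modulo tangential operators, and one has the elementary identities
\begin{equation*}
-\p_\tau^2(\ln\tau)=\tau^{-2},\qquad -\p_\tau^2(\tau^{1/2})=\tfrac{1}{4}\tau^{-3/2},\qquad -\p_\tau^2\bigl(\tau(1-\ln\tau)\bigr)=\tau^{-1},
\end{equation*}
together with $K\p_\tau(\ln\tau)=K\tau^{-1}$. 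The specific coefficients in $\t u_\pm$ therefore cancel the $\tau^{-2}$, $\tau^{-3/2}$ and $\tau^{-1}$ singularities of $f_\pm$, so that the residual $r_\pm:=(-\D_{x'}-\l)\t u_\pm-f_\pm$ is $\Odr(\tau^{-1/2})$ near $\p\om$ and hence lies in $L_2(\om_\pm)$.

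Next I would set $\bs r:=(r_+,r_-)\in L_2(\bs\om)$ and look for $\bs v=(v_+,v_-)\in\Dom(\Ho)$ solving $(\Ho-\l)\bs v=-\bs r$. Since $\Ho$ is self-adjoint with compact resolvent, the Fredholm alternative reduces this to the orthogonality $(\bs r,\bs\psi_i)_{L_2(\bs\om)}=0$ for $i=1,\dots,m$. To identify this with the hypothesis (\ref{4.79}) I would apply Green's formula on $\om^\d$ and let $\d\to+0$; using $(-\D_{x'}-\l)\psi_\pm^{(i)}=0$, the volume integral collapses to a boundary integral on $\{\tau=\d\}$ of the Wronskian-type quantity $\psi_\pm^{(i)}\p_\tau\t u_\pm-\t u_\pm\p_\tau\psi_\pm^{(i)}$ against the surface element $\Det\mathrm{M}|_{\tau=\d}\di s=(1-\d K+\Odr(\d^2))\di s$. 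Inserting the explicit asymptotics of $\t u_\pm$ together with (\ref{2.15}), the $\d^{-1}$, $\d^{-1/2}$, and $\ln\d$ contributions reproduce exactly the regularizing subtractions inside the bracket of (\ref{4.79}), while the $\d^0$ contribution---after absorbing the $-\d K$ surface-area correction that converts a $\d^{-1}$ term into a finite one---reproduces the boundary integrals involving $(f_{-2}^+-f_{-2}^-)\Psi_i^{(1)}$, $K(f_{-2}^++f_{-2}^-)\Psi_i^{(0)}$ and $V^{(0)}\Psi_i^{(1)}-V^{(1)}\Psi_i^{(0)}$. The resulting identity shows that (\ref{4.79}) is precisely the condition $(\bs r,\bs\psi_i)_{L_2(\bs\om)}=0$.

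Granted (\ref{4.79}), $\bs v$ exists and is unique modulo $\ker(\Ho-\l)$. Iterating the ansatz with higher-order correctors of the form $\tau^{3/2}$, $\tau^2\ln\tau$, $\tau^2$ and invoking standard elliptic regularity on each side yields $v_\pm\in C^\infty(\overline{\om}_\pm)$; the gluing built into $\Dom(\Ho)$ forces a common trace $v_\pm|_{\p\om}=U^{(0)}$ and opposite normal traces $\p_\tau v_\pm|_{\p\om}=\pm U^{(1)}$, supplying the symmetric $U^{(0)}$ and antisymmetric $\pm U^{(1)}$ contributions in (\ref{4.81}), and $u_\pm:=\t u_\pm+v_\pm$ has the claimed asymptotic form. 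The remaining $m$-parameter freedom $\bs v\mapsto\bs v+\sum c_i\bs\psi_i$ shifts $(U^{(0)},U^{(1)})\mapsto\bigl(U^{(0)}+\sum c_i\Psi_i^{(0)},U^{(1)}+\sum c_i\Psi_i^{(1)}\bigr)$ while leaving $(V^{(0)},V^{(1)})$ fixed, so (\ref{4.81a}) becomes an $m\times m$ linear system whose Gram matrix is positive definite: the linear independence of $\{(\Psi_i^{(0)},\Psi_i^{(1)})\}$ in $L_2(\p\om)\oplus L_2(\p\om)$ follows because a combination of $\bs\psi_i$ with vanishing Cauchy data on $\p\om$ must be identically zero by unique continuation. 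The hard part throughout is the book-keeping in the Green-formula step: the special coefficient $f_{-1}^\pm-K f_{-2}^\pm$, the normalization factor $4$ in front of $\tau^{1/2}$, and the $K$-dependent finite contribution all arise from the subleading pieces of $-\D_{x'}$ and the surface-area factor $\Det\mathrm{M}|_{\tau=\d}$, and any miscount in these would destroy the exact match with (\ref{4.79}).
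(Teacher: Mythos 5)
Your overall route is the same as the paper's own proof: subtract a cut-off singular ansatz near $\p\om$, reduce to an equation $(\Ho-\l)\bs{v}=-\bs{r}$ in $L_2(\bs{\om})$ and invoke the Fredholm alternative, identify the orthogonality conditions with (\ref{4.79}) by Green's formula on $\om^\d$ using the surface element with the factor $\prod_j(1-\d K_j)$, fix the eigenfunction component of the solution through (\ref{4.81a}), and obtain the boundary asymptotics by adding higher-order correctors and elliptic regularity. Your justification that (\ref{4.81a}) can always be achieved (invertibility of the Gram matrix of the Cauchy data $(\Psi_i^{(0)},\Psi_i^{(1)})$ via unique continuation) is a welcome detail which the paper leaves implicit.

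There is, however, a concrete flaw in the first step. Your ansatz only cancels the $\tau^{-2}$, $\tau^{-3/2}$ and $\tau^{-1}$ singularities, and you then claim that the residual, being $\Odr(\tau^{-1/2})$, ``hence lies in $L_2(\om_\pm)$''. It does not: near $\p\om$ the volume element is comparable to $\di\tau\di s$, so $\int_{\om\cap\{0<\tau<\d\}}\tau^{-1}\di x'$ diverges logarithmically and $\tau^{-1/2}\notin L_2$. Moreover the coefficient of $\tau^{-1/2}$ in your residual is generically nonzero: it equals $2Kf_{-3/2}^\pm-f_{-1/2}^\pm$, the first piece coming from $K\p_\tau\big(4f_{-3/2}^\pm\tau^{1/2}\big)$ and the second from the uncancelled term of $f_\pm$. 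Hence $\bs{r}\notin L_2(\bs{\om})$, and neither the Fredholm reduction nor the $\H^2$-trace definition of $U^{(0)},U^{(1)}$ goes through as written. The repair is exactly the additional term the paper's ansatz $\widehat{u}_\pm$ carries: add $-\tfrac{4}{3}\tau^{3/2}\big(f_{-1/2}^\pm-2K f_{-3/2}^\pm\big)\chi(\tau)$, whose radial second derivative contributes $\big(f_{-1/2}^\pm-2Kf_{-3/2}^\pm\big)\tau^{-1/2}$ and removes the $\tau^{-1/2}$ singularity. Relatedly, your assertion that the corrected remainder $v_\pm$ lies in $C^\infty(\overline{\om}_\pm)$ overstates matters: with finitely many correctors one only obtains finite smoothness up to $\p\om$ (this is the paper's $N$-dependent bootstrap with $\widehat{u}_\pm^{(N)}$), which is nevertheless sufficient for the $\Odr(\tau^{3/2})$ remainder in (\ref{4.81}).
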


\begin{proof}
Let $\chi(\tau)$ be the cut-off function introduced in the proof of Lemma~\ref{lm3.3}. We introduce the functions
\begin{align*}
\widehat{u}_\pm(x'):=\Bigg(&f_{-2}^\pm(P)\ln\tau \pm V^{(0)}(P) + 4 f_{-3/2}^\pm(P) \tau^{1/2}
\\
&+ \tau (1-\ln\tau) \big(f_{-1}^\pm(P)-K(P) f_{-2}^\pm(P) \big)
\\
&+\tau V^{(1)}(P)-\frac{4}{3}\tau^{3/2} \big(f_{-1/2}^\pm(P)-2K(P)f_{-3/2}^\pm(P)\big) \Bigg) \chi(\tau).
\end{align*}
Employing Lemma~\ref{lm4.2}, one can check that
\begin{equation}\label{4.82}
(-\D_{x'}-\l)\widehat{u}_\pm(x')=\chi(\tau) \sum\limits_{j=-4}^{-1} f_{j/2}^\pm(P)\tau^j + \widehat{f}_\pm(x'),
\end{equation}
where $\widehat{f}_\pm\in C^\infty(\om_\pm)\cap L_2(\om_\pm)$.

We construct the solutions to (\ref{4.80}) as
\begin{equation*}
u_\pm=\widehat{u}_\pm+\widetilde{u}_\pm.
\end{equation*}
Substituting this identity and (\ref{4.82}) into (\ref{4.80}), we obtain the equations for $\widetilde{u}_\pm$,
\begin{equation}\label{4.84}
(-\D_{x'}-\l)\widetilde{u}_\pm=\widetilde{f}_\pm, \quad \widetilde{f}_\pm:=f_\pm-\chi\sum\limits_{j=-4}^{-1} f_{j/2}^\pm \tau^j - \widehat{f}_\pm,
\end{equation}
and by (\ref{4.78}) we have $\widetilde{f}_\pm\in L_2(\om_\pm)$. Hence, we can rewrite these equations as
\begin{equation}\label{4.85}
(\mathcal{H}_0-\l)\widetilde{\bs{u}}=\widetilde{\bs{f}},\quad \widetilde{\bs{u}}:=(\widetilde{u}_+,\widetilde{u}_-), \quad \widetilde{\bs{f}}:=(\widetilde{f}_+,\widetilde{f}_-).
\end{equation}
Since $\l$ is a discrete eigenvalue of $\mathcal{H}_0$, the solvability condition of the last equation is
\begin{equation*}
(\widetilde{\bs{f}},\bs{\psi}_i)_{L_2(\bs{\om})}=0,\quad k=1,\ldots,m,
\end{equation*}
which can be rewritten as
\begin{equation*}
(\widetilde{f}_+,\psi_+^{(i)})_{L_2(\om)}+ (\widetilde{f}_+,\psi_+^{(i)})_{L_2(\om)}  =0,\quad k=1,\ldots,m,
\end{equation*}
or, equivalently,
\begin{equation}
\lim\limits_{\d\to0} \Big( (\widetilde{f}_+,\psi_+^{(i)})_{L_2(\om^\d)}+
(\widetilde{f}_-,\psi_-^{(i)})_{L_2(\om^\d)}\Big)=0,\quad k=1,\ldots,m.\label{4.86}
\end{equation}
Integrating by parts and taking into account (\ref{4.82}), (\ref{4.84}),  we get
\begin{align*}
(\widetilde{f}_\pm,\psi_\pm^{(i)})_{L_2(\om^\d)}=& \big(f_\pm+(\D_{x'}+\l)\widetilde{u}_\pm, \psi_\pm^{(i)}
\big)_{L_2(\om^\d)}
\\
= &(f_\pm, \psi_\pm^{(i)})_{L_2(\om^\d)}
-\int\limits_{\p\om^\d} \left( \psi_\pm^{(i)} \frac{\p \widetilde{u}_\pm}{\p\tau}-\widetilde{u}_\pm \frac{\p\psi_\pm^{(i)}}{\p\tau} \right)\di s.
\end{align*}
Here we have used that the normal derivative on $\p\om^\d$ is that w.r.t. to $\tau$ up to the sign. We parameterize the points of $\p\om^\d$ by those on $\p\om$ via the relation $x'=r(s)+\d\nu(s)$. In view of (\ref{3.8}) and (\ref{4.23}) we have
\begin{equation}\label{4.21}
\int\limits_{\p\om^\d} \cdot \di s=\int\limits_{\p\om} \cdot \prod\limits_{j=1}^{n-1}(1-\tau K_j)\di s.
\end{equation}
Taking this formula into account, we continue the calculations,
\begin{align*}
(\widetilde{f}_\pm,&\psi_\pm^{(i)})_{L_2(\om^\d)}=(f_\pm, \psi_\pm^{(i)})_{L_2(\om^\d)}
\\
&-\int\limits_{\p\om} \left( \psi_\pm^{(i)} \frac{\p \widetilde{u}_\pm}{\p\tau}-\widetilde{u}_\pm \frac{\p\psi_\pm^{(i)}}{\p\tau} \right)\Bigg|_{x'=r(s)+\d\nu(s)} \prod\limits_{j=1}^{n-1}(1-\tau K_j) \di s
\\
=&(f_\pm, \psi_\pm^{(i)})_{L_2(\om^\d)}-\d^{-1}\int\limits_{\p\om} f_{-2}^\pm \Psi_k^{(0)}\di s - 2 \d^{-1/2} \int\limits_{\p\om} f_{-3/2}^\pm \Psi_k^{(0)}\di s
\\
&-\ln\d \int\limits_{\p\om} \left( (K f_{-2}^\pm - f_{-1}^\pm) \Psi_i^{(0)}\mp f_{-2}^\pm \Psi_i^{(1)}\right)  \di s
\\
&+\int\limits_{\p\om} f_{-2}^\pm \big(\Psi_i^{(0)} K \mp \Psi_i^{(1)}\big)\di s
+\int\limits_{\p\om} \big(V^{(0)} \Psi_i^{(1)}-V^{(1)}\Psi_i^{(0)}\big)\di s
+\Odr(\d^{1/2}).
\end{align*}
We substitute the last identities into (\ref{4.86}) and arrive at (\ref{4.79}). Thus, the condition (\ref{4.79}) imply the existence of solutions to (\ref{4.80}).

The functions $\widetilde{u}_\pm\in\H^2(\om_\pm)$ satisfy (\ref{1.6}) in the sense of traces. Denote
\begin{equation*}
U^{(0)}:=\widetilde{u}_\pm|_{\p\om},\quad U^{(1)}:=\frac{\p \widetilde{u}_\pm}{\p\tau}\Big|_{\p\om},\quad U^{(0)}, U^{(1)}\in L_2(\p\om).
\end{equation*}
The solution to (\ref{4.85}) is defined up to a linear combination of the eigenfunctions. In  view of the
belongings $U^{(0)}, U^{(1)}\in L_2(\p\om)$ we can choose the mentioned linear combination of the eigenfunctions so
that the condition (\ref{4.81a}) is satisfied. Then the solution to (\ref{4.85}) is unique and the same
is obviously true for (\ref{4.80}). To prove the asymptotics (\ref{4.81}) it is sufficient to study the
smoothness of $\widetilde{u}_\pm$ at $\p\om$.

By standard smoothness improving theorems we conclude that $\widetilde{u}_\pm\in C^\infty(\om)$. Moreover, given any $N>0$, it is easy to construct the function
$\widehat{u}_\pm^{(N)}$ similar to $\widehat{u}_\pm$ such that
\begin{gather*}
\widehat{u}_\pm^{(N)}(x')=\widehat{u}_\pm(x')+\Odr(\tau^2),\quad \tau\to0,
\\
(-\D_{x'}-\l)\widehat{u}_\pm^{(N)}(x')=\chi(\tau) \sum\limits_{j=-4}^{N} f_{j/2}^\pm(P)\tau^j + \widehat{f}_\pm^{(N)}(x'),
\end{gather*}
where $\widehat{f}_\pm^{(N)}\in C^\infty(\om_\pm)\cap C^{N_1}(\overline{\om}_\pm)$, and $N_1=N_1(N)\to+\infty$, $N\to+\infty$. Then, proceeding as above, we can construct the solutions to (\ref{4.80}) as $u_\pm=\widetilde{u}_\pm+\widehat{u}_\pm$, where
$\widetilde{\bs{u}}^{(N)}:=(\widetilde{u}_+^{(N)},\widetilde{u}_-^{(N)})$ solves the equation
\begin{gather*}
(\mathcal{H}_0-\l)\widetilde{\bs{u}}^{(N)}=\widetilde{\bs{f}}^{(N)},\quad \widetilde{\bs{f}}^{(N)}:=(\widetilde{f}_+^{(N)},\widetilde{f}_-^{(N)}),
 \\ \widetilde{f}_\pm^{(N)}(x'):=f_\pm(x')-\chi(\tau)\sum\limits_{j=-4}^N f_{j/2}^\pm(P)\tau^j- \widehat{f}_\pm^{(N)}.
\end{gather*}
It is clear that $\widetilde{f}_\pm^{(N)}$ belongs to $C^{N_2}(\overline{\om}_\pm)$, where $N_2=N_2(N)\to+\infty$ as $N\to+\infty$. Hence, by the smoothness improving theorems $\widetilde{u}_\pm^{(N)}\in C^{N_3}(\overline{\om}_\pm)$, $N_3=N_3(N)\to+\infty$, $N\to+\infty$. Choosing $N$ large enough, we arrive at the asymptotics (\ref{4.81}).
\end{proof}

\begin{lemma}\label{lm4.4}
For all $u,v\in C^\infty(\overline{\om})$ in a small vicinity of $\p\om$ the identities
\begin{align}
&\Div_{x'} \mathrm{Q}_\pm\nabla_{x'} u= \frac{1}{\det \mathrm{M}} \Div_{(\tau,s)} (\det \mathrm{M}) \widehat{\mathrm{M}} \nabla_{(\tau,s)} h_\pm (\nabla_{(\tau,s)} h_\pm)^* \widehat{\mathrm{M}} \nabla_{(\tau,s)} u,\label{4.43}
\\
&(\nabla_{x'} u,\nabla_{x'} v)_{\mathds{R}^d}= \frac{\p u}{\p\tau}\frac{\p v}{\p\tau}
+\nabla u\cdot(\mathrm{E}-\tau \mathrm{B}\mathrm{G}^{-1}_{\p\om})^{-2}\nabla v
\label{4.44}
\end{align}
hold true.
\end{lemma}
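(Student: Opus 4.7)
The plan is to reuse the chain-rule and integration-by-parts machinery already developed for Lemma~\ref{lm4.2}, only this time with the symmetric rank-one tensor $\mathrm{Q}_\pm=(\nabla_{x'}h_\pm)(\nabla_{x'}h_\pm)^*$ inserted between the gradients, and to exploit the block form of $\widehat{\mathrm{M}}$ given in (\ref{4.29a}).

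For identity (\ref{4.43}), first express $\mathrm{Q}_\pm$ in the variables $(\tau,s)$. From (\ref{3.8}) we have $\nabla_{x'}=\mathrm{M}^{-1}\nabla_{(\tau,s)}$, hence
\begin{equation*}
\mathrm{Q}_\pm=\mathrm{M}^{-1}\,\nabla_{(\tau,s)}h_\pm\,(\nabla_{(\tau,s)}h_\pm)^*\,(\mathrm{M}^{-1})^*,
\qquad
\mathrm{Q}_\pm\nabla_{x'}u=\mathrm{M}^{-1}\,w,
\end{equation*}
where $w:=\nabla_{(\tau,s)}h_\pm\,(\nabla_{(\tau,s)}h_\pm)^*\,\widehat{\mathrm{M}}\,\nabla_{(\tau,s)}u$ and we have used $\widehat{\mathrm{M}}=(\mathrm{M}^{-1})^*\mathrm{M}^{-1}$. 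Then I would repeat, verbatim, the integration-by-parts computation from Lemma~\ref{lm4.2}: for any $\phi\in C_0^\infty(\om)$ supported where $(\tau,s)$ are valid,
\begin{equation*}
\int\phi\,\Div_{x'}(\mathrm{Q}_\pm\nabla_{x'}u)\di x'
=-\int (\mathrm{Q}_\pm\nabla_{x'}u)\cdot\nabla_{x'}\phi\di x'
=-\int w^{\,*}\widehat{\mathrm{M}}\,\nabla_{(\tau,s)}\phi\;\Det\mathrm{M}\,\di\tau\di s,
\end{equation*}
after which integrating by parts in $(\tau,s)$ and converting $\Det\mathrm{M}\,\di\tau\di s$ back to $\di x'$ yields the right-hand side of (\ref{4.43}). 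Here the only algebraic ingredient is the identity $(\mathrm{M}^{-1}a)\cdot(\mathrm{M}^{-1}b)=a^*\widehat{\mathrm{M}}\,b$, which is immediate from the definition of $\widehat{\mathrm{M}}$.

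For identity (\ref{4.44}), again use $\nabla_{x'}=\mathrm{M}^{-1}\nabla_{(\tau,s)}$ to obtain
\begin{equation*}
(\nabla_{x'}u,\nabla_{x'}v)_{\mathds{R}^d}
=(\nabla_{(\tau,s)}u)^*\widehat{\mathrm{M}}\,\nabla_{(\tau,s)}v.
\end{equation*}
Now plug in the block-diagonal form of $\widehat{\mathrm{M}}$ from (\ref{4.29a}): the $(0,0)$-block contributes $\p_\tau u\,\p_\tau v$, while the $s$-block gives $(\nabla_s u)^*(\mathrm{E}-\tau\mathrm{G}^{-1}_{\p\om}\mathrm{B})^{-2}\mathrm{G}^{-1}_{\p\om}\nabla_s v$. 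To rewrite the latter in the form appearing in the statement, apply the similarity identity
\begin{equation*}
\mathrm{G}_{\p\om}(\mathrm{E}-\tau\mathrm{G}^{-1}_{\p\om}\mathrm{B})\,\mathrm{G}^{-1}_{\p\om}
=\mathrm{E}-\tau\mathrm{B}\,\mathrm{G}^{-1}_{\p\om},
\end{equation*}
which after squaring and inverting gives $\mathrm{G}_{\p\om}(\mathrm{E}-\tau\mathrm{G}^{-1}_{\p\om}\mathrm{B})^{-2}\mathrm{G}^{-1}_{\p\om}=(\mathrm{E}-\tau\mathrm{B}\mathrm{G}^{-1}_{\p\om})^{-2}$. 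Using this together with the definition $\nabla u\cdot\nabla v=(\nabla_s u)^*\mathrm{G}^{-1}_{\p\om}\nabla_s v$ converts the $s$-block contribution into $\nabla u\cdot(\mathrm{E}-\tau\mathrm{B}\mathrm{G}^{-1}_{\p\om})^{-2}\nabla v$, proving (\ref{4.44}).

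Neither step is deep: both identities are exercises in carrying the change of variables through, and the whole difficulty, such as it is, lies in keeping careful track of transposes so that $\widehat{\mathrm{M}}$ appears in exactly the right place, and in recognising that the left/right positioning of $\mathrm{G}^{-1}_{\p\om}$ relative to $(\mathrm{E}-\tau\mathrm{G}^{-1}_{\p\om}\mathrm{B})^{-2}$ can be swapped by the similarity identity above. This is the only point where a reader might hesitate, so I would state that identity explicitly as the bridge between the intrinsic ($s$-coordinate) and extrinsic ($\p\om$-gradient) forms of the answer.
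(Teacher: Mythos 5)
Your argument is correct and follows essentially the same route as the paper: identity (\ref{4.43}) is obtained by testing against smooth compactly supported functions, passing to the $(\tau,s)$ variables via $\nabla_{x'}=\mathrm{M}^{-1}\nabla_{(\tau,s)}$ and $\di x'=(\Det\mathrm{M})\di\tau\di s$, and integrating by parts back, while (\ref{4.44}) follows from the block form (\ref{4.29a}) of $\widehat{\mathrm{M}}$ together with the commutation $(\mathrm{E}-\tau\mathrm{G}_{\p\om}^{-1}\mathrm{B})^{-2}\mathrm{G}_{\p\om}^{-1}=\mathrm{G}_{\p\om}^{-1}(\mathrm{E}-\tau\mathrm{B}\mathrm{G}_{\p\om}^{-1})^{-2}$, which the paper uses implicitly and you state explicitly.
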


\begin{proof}
Let $u,v\in C^\infty(\overline{\om})$ be two arbitrary functions with supports in a small vicinity $\{x': 0\leqslant \tau<\tau_0\}$, where $\tau_0$ is a small fixed number. We choose $\tau_0$ so that in this vicinity the coordinates $(\tau,s)$ are well-defined.

Taking (\ref{3.3}) and (\ref{3.8}) into account, we pass to the variables $(\tau,s)$ and integrate by parts to
obtain
\begin{align*}
&\int\limits_{\om} v\Div_{x'} \mathrm{Q}_\pm\nabla_{x'} u\di x'=- \int\limits_{\om} (\nabla_{x'} v, \nabla_{x'} h_\pm (\nabla_{x'} h_\pm)^* \nabla_{x'} u)_{\mathds{R}^n}\di x'
\\
&=-\int\limits_{[0,\tau_0)\times\p\om} \big(\mathrm{M}^{-1} \nabla_{(\tau,s)} v, \mathrm{M}^{-1} \nabla_{(\tau,s)} h_\pm (\nabla_{(\tau,s)} h_\pm)^*  \widehat{\mathrm{M}}  \nabla_{(\tau,s)} u\big)_{\mathds{R}^n} (\det \mathrm{M}) \di \tau\di s
\\
&=\int\limits_{[0,\tau_0)\times\p\om}  v \Div_{(\tau,s)} (\det \mathrm{M}) \widehat{\mathrm{M}}  \nabla_{(\tau,s)} h_\pm (\nabla_{(\tau,s)} h_\pm)^*  \widehat{\mathrm{M}}  \nabla_{(\tau,s)} u \di \tau\di s
\\
&=\int\limits_{\om}  v (\det{\!}^{-1} \mathrm{M})\Div_{(\tau,s)} (\det \mathrm{M}) \widehat{\mathrm{M}}  \nabla_{(\tau,s)} h_\pm (\nabla_{(\tau,s)} h_\pm)^*  \widehat{\mathrm{M}}  \nabla_{(\tau,s)} u \di x',
\end{align*}
which proves (\ref{4.43}).

The identity (\ref{4.44}) follows from (\ref{3.8}) and (\ref{4.29a}),
\begin{align*}
(\nabla_{x'} u,\nabla_{x'} v)_{\mathds{R}^n}&= (\mathrm{M}^{-1}\nabla_{x'}u, \mathrm{M}^{-1}\nabla_{x'}v)_{\mathds{R}^n}
= (\nabla_{x'}u, \widehat{\mathrm{M}}\nabla_{x'}v)_{\mathds{R}^n}
\\
&= \frac{\p u}{\p\tau}\frac{\p v}{\p\tau}   + \big(\nabla_s u, (\mathrm{E}-\tau \mathrm{G}^{-1}_{\p\om}\mathrm{B})^{-2} \mathrm{G}^{-1}_{\p\om}\nabla_s u\big)_{\mathds{R}^n}
\\
&= \frac{\p u}{\p\tau}\frac{\p v}{\p\tau}   + \nabla u\cdot (\mathrm{E}-\tau \mathrm{B}\mathrm{G}^{-1}_{\p\om})^{-2} \nabla v.
\end{align*}
\end{proof}

\section{Uniform resolvent convergence\label{unifresconv}}

In this section we prove Theorem~\ref{th1.1}. We begin with two auxiliary lemmas.

\begin{lemma}\label{lm1.1}
The identity $\Dom(\Ho)=\H^2(\bs{\om})$ holds true and for each
$\bs{u}\in\Dom(\Ho)$ the operator $\Ho$ acts as
$\Ho(\bs{u})=(-\D_{x'}u_+,-\D_{x'}u_-)$. For each $z\in\mathds{C}\setminus\mathds{R}$ the estimate 
\begin{equation}\label{2.7a}
\|(\mathcal{H}_0-z)^{-1}\|_{L_2(\bs{\om})\to\H^2(\bs{\om})}\leqslant \frac{\ds C}{\ds |
\IM 
(z)|}
\end{equation}
holds for some constant $C$, where 
$\IM(z)$ denotes the imaginary 
part of $z$.
\end{lemma}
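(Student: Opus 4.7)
The plan is to reduce $\Ho$ to the direct sum of the Dirichlet and Neumann Laplacians on $\om$ via an even--odd decomposition and then appeal to standard elliptic regularity on $\om$. Introduce the map $U:L_2(\bs{\om})\to L_2(\om)\oplus L_2(\om)$ defined by $U(u_+,u_-):=(w_e,w_o)$ with $w_e:=(u_++u_-)/\sqrt{2}$ and $w_o:=(u_+-u_-)/\sqrt{2}$. A direct computation shows that $U$ is unitary and that $\ho[\bs{u}]=\|\nabla w_e\|_{L_2(\om)}^2+\|\nabla w_o\|_{L_2(\om)}^2$. Because the only constraint built into $\H^1(\bs{\om})$ is the trace identity $u_+|_{\p\om}=u_-|_{\p\om}$, which amounts to $w_o|_{\p\om}=0$ with $w_e$ unrestricted on $\p\om$, the form domain is mapped onto $\H^1(\om)\oplus\{w\in\H^1(\om):w|_{\p\om}=0\}$. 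Consequently $U\Ho U^{-1}=\Hn\oplus\Hd$.

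From this the first two claims of the lemma are immediate. Since $\p\om$ is $C^\infty$, classical elliptic regularity gives $\Dom(\Hn)=\{w\in\H^2(\om):\p w/\p\tau|_{\p\om}=0\}$ and $\Dom(\Hd)=\{w\in\H^2(\om):w|_{\p\om}=0\}$. Pulling these back via $U^{-1}$, one finds that $\bs{u}\in\Dom(\Ho)$ precisely when $u_\pm\in\H^2(\om)$ together with $w_o|_{\p\om}=0$ and $(\p w_e/\p\tau)|_{\p\om}=0$, and these two equations are exactly the $j=2$ boundary conditions (\ref{1.6}) rewritten in the $(u_+,u_-)$ variables. Hence $\Dom(\Ho)=\H^2(\bs{\om})$, and the componentwise action $\Ho\bs{u}=(-\D u_+,-\D u_-)$ drops out from $U^{-1}(-\D w_e,-\D w_o)$ and the linearity of $\D$.

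For the resolvent estimate, self-adjointness and the spectral theorem immediately yield the $L_2$ bound $\|(\Ho-z)^{-1}\|_{L_2(\bs{\om})\to L_2(\bs{\om})}\leqslant|\IM z|^{-1}$. Writing $\bs{u}=(\Ho-z)^{-1}\bs{f}$, each component satisfies $-\D u_\pm=f_\pm+zu_\pm$ in $\om$ with Dirichlet or Neumann data coming from the decomposition, so the standard a priori bound $\|u_\pm\|_{\H^2(\om)}\leqslant C(\|f_\pm\|_{L_2(\om)}+(1+|z|)\|u_\pm\|_{L_2(\om)})$ combined with the $L_2$ resolvent bound yields $\|(\Ho-z)^{-1}\|_{L_2(\bs{\om})\to\H^2(\bs{\om})}\leqslant C(z)|\IM z|^{-1}$, where the constant $C(z)$ absorbs the polynomial growth in $|z|$.

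The main obstacle is to justify cleanly that the trace equality $u_+|_{\p\om}=u_-|_{\p\om}$, which in $\H^1(\bs{\om})$ is merely an equality in $\H^{1/2}(\p\om)$, does translate to the Dirichlet condition on $w_o$ that is required in order to invoke the Neumann/Dirichlet form decomposition on form domains; this follows from the linearity of the trace map together with the characterization of its kernel on $\H^1(\om)$. Everything else reduces to well-known facts about the Dirichlet and Neumann Laplacians on smooth bounded domains.
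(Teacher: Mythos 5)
Your proof is correct, but it takes a more explicit (and genuinely different) route than the paper, whose proof of this lemma is essentially a two-sentence assertion: the domain identification is said to follow from the definition of $\H^2(\bs{\om})$ and the remarks preceding it, and the bound (\ref{2.7a}) from self-adjointness of $\Ho$ together with the graph-norm equivalence implicit in $\Dom(\Ho)=\H^2(\bs{\om})$. You instead conjugate by the even--odd unitary $U:(u_+,u_-)\mapsto\bigl((u_++u_-)/\sqrt{2},\,(u_+-u_-)/\sqrt{2}\bigr)$, observe that the form domain $\H^1(\bs{\om})$ is mapped onto $\H^1(\om)\oplus\{w\in\H^1(\om):w|_{\p\om}=0\}$, and conclude $U\Ho U^{-1}=(\Hn)\oplus(\Hd)$; the paper uses this even--odd splitting only later, in Lemma~\ref{lm1.2}, and only at the level of eigenfunctions. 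Your route makes the elliptic-regularity input ($\H^2$-regularity of the Dirichlet and Neumann Laplacians on the smooth domain $\om$, whose boundary conditions translate exactly into the gluing conditions (\ref{1.6}) with $j=2$) explicit, and it delivers Lemma~\ref{lm1.2} essentially for free; the paper's version is shorter but leaves these points to the reader. Two small things to tidy up. First, the a priori estimate $\|w\|_{\H^2(\om)}\leqslant C\bigl(\|\D w\|_{L_2(\om)}+\|w\|_{L_2(\om)}\bigr)$ should be applied to $w_e$ and $w_o$, which are the functions carrying the homogeneous Neumann and Dirichlet conditions, and only then transferred to $u_\pm$ by the triangle inequality; the components $u_\pm$ themselves satisfy no decoupled boundary condition. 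Second, your bound is of the form $C(z)|\IM z|^{-1}$ with $C(z)$ growing in $|z|$; this is consistent with the statement as written (``for each $z$ \dots for some constant $C$'') and with the way (\ref{2.7a}) is used in the proof of Theorem~\ref{th1.1} at a fixed $z$, but one should not claim a $z$-uniform constant: for $z=\iu T$, $T\to+\infty$, the $L_2(\bs{\om})\to\H^2(\bs{\om})$ norm of the resolvent stays bounded away from zero (test on eigenfunctions with eigenvalue $\l\gg T$), so a uniform bound $C/|\IM z|$ is impossible, and the paper's own terse argument is subject to the same reading.
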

\begin{proof}
The first part follows from the definitions and the considerations above for the space $\H^2(\bs{\om})$.
The second part of the statement follows from the fact that the operator $\mathcal{H}_0$ is self--adjoint with
compact resolvent.
\end{proof}

The description of the spectrum of $\Ho$ as being made up of the union of the Dirichlet and Neumann spectra,
is given in the following lemma, together with some properties which will be useful in the sequel.
\begin{lemma}\label{lm1.2}
The spectrum of $\Ho$ coincides with the union of spectra of $\Hd$ and $\Hn$
counting multiplicities. Namely, if $\l$ is an $m^{(D)}$-multiple eigenvalue
of $\Hd$ with the associated eigenfunctions $\psi_i^{(D)}$, $i=1,\ldots,m^{(D)}$,
and is an $m^{(N)}$-multiple eigenvalue of $\Hn$ with the associated eigenfunctions
$\psi_i^{(N)}$, $i=1,\ldots,m^{(N)}$, then $\l$ is $m^{(D)}+m^{(N)}$-multiple
eigenvalue of $\Ho$ with the associated eigenfunctions
$\bs{\psi}_i=(\psi_i^{(D)},-\psi_i^{(D)})$ and $\bs{\psi}_i=(\psi_i^{(N)},\psi_i^{(N)})$.
For any eigenfunction $\bs\psi=(\psi_+,\psi_-)$ of $\Ho$ we have
$\psi_\pm\in C^\infty(\overline{\om})$ and the asymptotics
\begin{equation*}
\psi_\pm(x')=\Psi^{(0)}(P)\pm\tau \Psi^{(1)}(P)+\Odr(\tau^2),\quad P\in\p\om,
\end{equation*}
where
\begin{align*}
&\Psi^{(0)}=\psi_+\big|_{\p\om}=\psi_-\big|_{\p\om}\in C^\infty(\p\om),
\quad
\Psi^{(1)}=\frac{\p\psi_+}{\p\tau}\Big|_{\p\om}=-\frac{\p\psi_-}{\p\tau}\big|_{\p\om}\in C^\infty(\p\om)
\end{align*}
and
\[
x'=P+\tau \nu(P)
\]
for small positive $\tau$.
\end{lemma}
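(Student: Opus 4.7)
The plan is to decompose every element of $\Dom(\Ho)=\H^2(\bs\om)$ into its symmetric and antisymmetric parts with respect to the involution $(u_+,u_-)\mapsto(u_-,u_+)$, and to show that the gluing conditions (\ref{1.6}) for $j=2$ translate precisely into Neumann conditions on the symmetric component and Dirichlet conditions on the antisymmetric component. More precisely, given $\bs u=(u_+,u_-)\in\H^2(\bs\om)$, identify $u_\pm$ with functions on $\om$ and set
\begin{equation*}
u^{(N)}:=\frac{u_++u_-}{2},\qquad u^{(D)}:=\frac{u_+-u_-}{2}.
\end{equation*}
The two conditions in (\ref{1.6}) with $i=0,1$ give $(u_+-u_-)|_{\p\om}=0$ and $(\p_\tau u_++\p_\tau u_-)|_{\p\om}=0$, which is exactly $u^{(D)}|_{\p\om}=0$ and $\p_\tau u^{(N)}|_{\p\om}=0$. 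Conversely, any pair $(u^{(D)},u^{(N)})$ with $u^{(D)}\in\H^2(\om)\cap\Dom(\Hd)$ and $u^{(N)}\in\Dom(\Hn)$ yields $\bs u=(u^{(N)}+u^{(D)},u^{(N)}-u^{(D)})\in\H^2(\bs\om)$. Thus $\Dom(\Ho)$ is unitarily equivalent (via the orthogonal transformation above) to $\Dom(\Hd)\oplus\Dom(\Hn)$, and since $\Ho$ acts componentwise as $-\D_{x'}$ by Lemma~\ref{lm1.1}, the operator $\Ho$ is unitarily equivalent to $\Hd\oplus\Hn$. This yields the claimed identification of spectra with multiplicities and the stated form of the eigenfunctions.

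For the regularity and asymptotic claim, I would first apply standard elliptic regularity up to the boundary for the Dirichlet and Neumann Laplacians on the smooth domain $\om$: each $\psi_i^{(D)}$ and each $\psi_i^{(N)}$ belongs to $C^\infty(\overline{\om})$. Since an arbitrary eigenfunction $\bs\psi=(\psi_+,\psi_-)$ of $\Ho$ corresponding to $\l$ is a linear combination of the two types described above, the components $\psi_\pm$ are also in $C^\infty(\overline{\om})$. Then I would Taylor expand $\psi_\pm$ in the normal coordinate $\tau$ about $\p\om$:
\begin{equation*}
\psi_\pm(x')=\psi_\pm\big|_{\p\om}+\tau\frac{\p\psi_\pm}{\p\tau}\Big|_{\p\om}+\Odr(\tau^2),\qquad \tau\to+0,
\end{equation*}
uniformly in $P\in\p\om$. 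The boundary conditions (\ref{1.6}) with $i=0,1$ then force
\begin{equation*}
\psi_+\big|_{\p\om}=\psi_-\big|_{\p\om}=:\Psi^{(0)},\qquad \frac{\p\psi_+}{\p\tau}\Big|_{\p\om}=-\frac{\p\psi_-}{\p\tau}\Big|_{\p\om}=:\Psi^{(1)},
\end{equation*}
which yields the desired asymptotics, with $\Psi^{(0)},\Psi^{(1)}\in C^\infty(\p\om)$ from the smoothness of the traces.

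The only mildly delicate point is checking that the unitary equivalence really carries $\Ho$ into $\Hd\oplus\Hn$; this requires verifying that the symmetric/antisymmetric decomposition commutes with the sesquilinear form $\ho$ and that the domains match exactly, including the compatibility conditions on the normal traces. Once this is done the rest is a direct consequence of the spectral theory of $\Hd$ and $\Hn$ on smooth bounded domains together with classical elliptic regularity.
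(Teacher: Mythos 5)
Your proposal is correct and follows essentially the same route as the paper: the paper also proves the spectral identification via the symmetric/antisymmetric combinations $u\pm v$ (mapping Dirichlet eigenfunctions to $(u,-u)$ and Neumann ones to $(v,v)$, and conversely forming $w_1=u-v$, $w_2=u+v$ from an eigenfunction of $\Ho$), and it dismisses the smoothness and the Taylor asymptotics as ``standard arguments,'' which is exactly the elliptic-regularity-plus-expansion step you spell out. The only difference is presentational: you package the decomposition as a unitary equivalence of $\Ho$ with $\Hd\oplus\Hn$ at the level of domains (which handles the multiplicity count cleanly, modulo the harmless $1/2$ versus $1/\sqrt{2}$ normalization), whereas the paper argues eigenfunction by eigenfunction.
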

\begin{proof}
Clearly if $\lambda$ is an eigenvalue of $\Hd$ with eigenfunction $u$,
then $\lambda$ is an eigenvalue of $\Ho$ with eigenfunction $(u,-u)$.
Similarly, an eigenvalue of $\Hn$ with eigenfunction $v$ will also be
an eigenvalue of $\Ho$ with eigenfunction $(v,v)$.

Assume now that $(u,v)$ is an eigenfunction of $\Ho$ and consider the
functions $w_{1}=u-v$ and $w_{2}=u+v$. Then, provided
they do not vanish identically, both $w_{1}$ and $w_{2}$ will be eigenfunctions
of $\Hd$ and $\Hn$, respectively. In case $w_{1}$ vanishes identically, then
$u=v$ and $u$ will be an eigenfuntion of $\Hn$, while if $w_{2}$ vanishes
$u=-v$ and this will be an eigenfunction of $\Hd$.

The remaining part of the lemma follows from standard arguments.
\end{proof}


By $L_2(\bs{\om}, J_\e \di x')$ we indicate the subspace of
$L_2(\bs{\om})$ consisting of the functions $\bs{u}$ with the finite
norm
\begin{align*}
&\|\bs{u}\|_{L_2(\bs{\om}, J_\e\di x')}^2=\|u_+\|_{L_2(\om_+,
J_\e^+\di x')}^2+ \|u_-\|_{L_2(\om_-, J_\e^-\di x')}^2, \quad
\\
&\|u_\pm\|_{L_2(\om,J_\e^\pm\di x')}^2=\int\limits_{\om_\pm}
|u_\pm(x')|^2 J_\e^\pm(x') \di x'.
\end{align*}
In the same way we introduce the space $\H^1(\bs{\om},J_\e \di x')$
as consisting of $\bs{u}\in\H^1(\bs{\om})$ with the finite norm
\begin{equation*}
\|\bs{u}\|_{\H^1(\bs{\om},J_\e\di x')}^2= \|\nabla_{x'}
\bs{u}\|_{L_2(\bs{\om},J_\e\di x')}^2+ \|
\bs{u}\|_{L_2(\bs{\om},J_\e\di x')}^2,
\end{equation*}
where $\nabla_{x'}\bs{u}=(\nabla_{x'} u_+,\nabla_{x'} u_-)$.

\begin{lemma}\label{lm3.1}
The spaces $L_2(\hs_\e)$ and $L_2(\bs{\om}, J_\e \di x')$ are
isomorphic and the isomorphism is the operator $\PS_\e:
L_2(\bs{\om}, J_\e\di x')\to L_2(\hs_\e)$. If $\bs{u}\in
\H^1(\bs{\om},J_\e\di x')$, then $\PS_\e \bs{u}\in\H^1(\hs_\e)$, and
if $u\in\H^1(\hs_\e)$, then $\PS_\e^{-1}u\in\H^1(\bs{\om},J_\e\di
x')$. The inequality
\begin{equation}\label{3.7}
\|J_\e^{-\frac{1}{2}}\nabla_{x'}\bs{u}\|_{L_2(\bs{\om})}\leqslant
\|\nabla \PS_\e \bs{u}\|_{L_2(\hs_\e)}\leqslant \|\nabla_{x'}
\bs{u}\|_{L_2(\bs{\om},J_\e\di x')}
\end{equation}
holds true, where $J_\e^{-\frac{1}{2}}\nabla_{x'}\bs{u}:=\big(
(J_\e^+)^{-\frac{1}{2}}\nabla_{x'}u_+,
(J_\e^-)^{-\frac{1}{2}}\nabla_{x'}u_-\big)$, $\bs{u}=(u_+,u_-)$.
\end{lemma}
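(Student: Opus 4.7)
The statement is essentially a reorganisation of the matrix analysis already performed in Lemma~\ref{lm3.2}, and my plan is to carry it out in three short steps. First I would verify the $L_2$ identification by using the volume element formula $\di\hs_\e=J_\e^\pm\di x'$ from Lemma~\ref{lm3.2}: a direct change of variables gives
\begin{equation*}
\|\PS_\e\bs{u}\|_{L_2(\hs_\e)}^2 = \sum_\pm \int_\om |u_\pm(x')|^2 J_\e^\pm(x')\di x' = \|\bs{u}\|_{L_2(\bs{\om},J_\e\di x')}^2,
\end{equation*}
so that $\PS_\e$ is an isometric bijection between the two spaces and its inverse acts by restriction to the two sheets of $\hs_\e$.

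Next, the gradient double inequality~(\ref{3.7}) reduces to a pointwise spectral bound for $\mathrm{G}_\pm^{-1}$. Using $x'$ as local coordinates on each sheet $\hs_\e^\pm$, the induced Riemannian metric is $\mathrm{G}_\pm=\mathrm{E}+\e^2\mathrm{Q}_\pm$ by~(\ref{3.3}), hence the squared intrinsic gradient of $\PS_\e\bs{u}$ on the respective sheet reads $\nabla_{x'}u_\pm\cdot \mathrm{G}_\pm^{-1}\nabla_{x'}u_\pm$. Lemma~\ref{lm3.2} identifies the eigenvalues of $\mathrm{G}_\pm$ as $1$ (of multiplicity $n-1$) and $(J_\e^\pm)^2$ (simple), whence the operator sandwich
\begin{equation*}
(J_\e^\pm)^{-2}\mathrm{E}\leq \mathrm{G}_\pm^{-1}\leq \mathrm{E}
\end{equation*}
holds pointwise. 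Applied to $\nabla_{x'}u_\pm$, multiplied by the volume element $J_\e^\pm\di x'=\di\hs_\e$ on each sheet and summed over $\pm$, this produces precisely the bound~(\ref{3.7}).

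Finally, the Sobolev mapping assertions follow at once from~(\ref{3.7}) combined with the $L_2$ isometry just established: if $\bs{u}\in\H^1(\bs{\om},J_\e\di x')$ then the right-hand inequality in (\ref{3.7}) bounds the gradient part of $\|\PS_\e\bs{u}\|_{\H^1(\hs_\e)}$ by $\|\nabla_{x'}\bs{u}\|_{L_2(\bs{\om},J_\e\di x')}$, while the function part matches by the isometry, so $\PS_\e\bs{u}\in\H^1(\hs_\e)$; the converse direction, for $u\in\H^1(\hs_\e)$, proceeds in the same fashion using the left inequality in (\ref{3.7}). There is no substantive obstacle here---the whole argument is a routine propagation of the rank-one perturbation structure of $\mathrm{G}_\pm$ already made explicit in Lemma~\ref{lm3.2} through integration against the surface measure $\di\hs_\e$.
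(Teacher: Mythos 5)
Your argument is correct and is essentially the paper's own proof: the $L_2$ identification via the volume element $J_\e^\pm\di x'$, the pointwise sandwich $(J_\e^\pm)^{-2}\mathrm{E}\leqslant \mathrm{G}_\pm^{-1}\leqslant \mathrm{E}$ coming from the eigenvalues computed in Lemma~\ref{lm3.2}, and integration against the surface measure to get (\ref{3.7}). The only difference is cosmetic: you spell out the $L_2$ isometry and the final Sobolev mapping deductions, which the paper leaves as immediate consequences of the definition and of (\ref{3.7}).
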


\begin{proof}
The fact that $\PS_\e$ is a bijection between the two spaces follows directly
from its definition.

Regarding the inequalities we have
\[
\begin{array}{lll}
\|J_\e^{-\frac{1}{2}}\nabla_{x'}\bs{u}\|_{L_2(\bs{\om})}^2 & = & \int\limits_{\om_{+}}\left(J_\e^{+}\right)^{-1}
|\nabla_{x'}u_{+}|^2dx'+\int\limits_{\om_{-}}\left(J_\e^{-}\right)^{-1}
|\nabla_{x'}u_{-}|^2dx'\eqskip
& = & \int\limits_{\om_{+}} J_\e^{+}\left(J_\e^{+}\right)^{-2} |\nabla_{x'}u_{+}|^2dx'+\int\limits_{\om_{-}}J_\e^{-}\left(J_\e^{-}\right)^{-2}
|\nabla_{x'}u_{-}|^2dx'\eqskip
& \leq & \int\limits_{\om_{+}} J_\e^{+} (\nabla_{x'}u_{+})^{*}G_{+}^{-1}\nabla_{x'}u_{+} dx'+\int\limits_{\om_{-}}
J_\e^{-}(\nabla_{x'}u_{-})^{*}G_{-}^{-1}\nabla_{x'}u_{-} dx'\eqskip
& = & \|\nabla \PS_\e \bs{u}\|_{L_2(\hs_\e)}\eqskip
& \leq & \int\limits_{\om_{+}} J_\e^{+} |\nabla_{x'}u_{+}|^2 dx'+\int\limits_{\om_{-}}J_\e^{-}
|\nabla_{x'}u_{-}|^{2} dx'\eqskip
& = & \|\nabla_{x'} \bs{u}\|_{L_2(\bs{\om},J_\e\di x')},
\end{array}
\]
where we have used the knowledge of the eigenvalues of $G_{\pm}$ and the fact that
$ 1\leq J_\e^{\pm}$.
\end{proof}

Denote
$\om_\d:=\om\cap\{x': 0<\tau<\d\}$. We recall that the set  $\om^\d$
was introduced in (\ref{3.12a}), and in what follows $\bs{\om}^\d$ is
$\om^\d$ considered as a two-sided domain.

\begin{lemma}\label{lm3.3}
If $\bs{u}\in \H^1(\bs{\om})$, respectively, $\bs{u}\in
\H^2(\bs{\om})$, then $\bs{u}\in L_2(\bs{\om},J_\e\di x')$,
respectively, $\bs{u}\in\H^1(\bs{\om},J_\e\di x')$. The inequalities
\begin{align}
&\|\bs{u}\|_{L_2(\bs{\om},J_\e\di x')}\leqslant
C\|\bs{u}\|_{\H^1(\bs{\om})}, \label{3.23a}
\\
&\|\bs{u}\|_{L_2(\bs{\om}^{\e^{4/3}},J_\e\di x')}\leqslant C
\e^{2/3}\|\bs{u}\|_{\H^1(\bs{\om})}, \label{3.23c}
\\
&\|\bs{u}\|_{L_2(\bs{\om}^{\e^{4/3}})}\leqslant
C\e^{2/3}\|\PS_\e\bs{u}\|_{\H^1(\hs_\e)},\label{3.23e}
\\
&\|\bs{u}\|_{\H^1(\bs{\om},J_\e\di x')}\leqslant
C\|\bs{u}\|_{\H^2(\bs{\om})}, \nonumber
\\
& \|\bs{u}\|_{\H^1(\om^{\e^{4/3}},J_\e\di x')}\leqslant C
\e^{2/3}\|\bs{u}\|_{\H^2(\bs{\om})} \label{3.23d}
\end{align}
hold true, where $C$ are positive constants independent of $\e$ and
$\bs{u}$.
\end{lemma}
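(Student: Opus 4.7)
The proof reduces to analyzing the weight $J_\e^\pm=\sqrt{1+\e^2|\nabla_{x'}h_\pm|^2}$. Since $h_\pm$ is smooth on compact subsets of $\om$, one has $J_\e^\pm\leqslant C$ uniformly in $\e$ outside a fixed neighborhood of $\p\om$; inside the collar $\{0<\tau<\tau_0\}$, where $\tau_0$ is a fixed small positive constant, the bound~(\ref{3.12}) yields
\begin{equation*}
J_\e^\pm(x')\leqslant 1+C\e\tau^{-1/2}.
\end{equation*}
All six inequalities therefore reduce to estimating $\int(1+\e\tau^{-1/2})|v|^2\di x'$ on the relevant subdomain, with $v=u_\pm$ or $v=\nabla_{x'}u_\pm$.

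The basic tool is the pointwise identity coming from the fundamental theorem of calculus along the inward normal,
\begin{equation*}
|v(\tau,s)|^2\leqslant 2|v(0,s)|^2+2\tau\int_0^{\tau}|\p_\sigma v(\sigma,s)|^2\di\sigma.
\end{equation*}
Multiplying by $1+\e\tau^{-1/2}$, integrating in $\tau\in(0,\d)$, integrating in $s\in\p\om$, and applying the standard trace inequality $\|v(0,\cdot)\|^2_{L_2(\p\om)}\leqslant C\|v\|^2_{\H^1(\om)}$, one obtains
\begin{equation*}
\int\limits_{\om\cap\{0<\tau<\d\}}(1+\e\tau^{-1/2})|v|^2\di x'\leqslant C(\d+\e\d^{1/2})\|v\|^2_{\H^1(\om)}.
\end{equation*}
Setting $\d=\tau_0$ fixed with $v=u_\pm$ and $v=\nabla_{x'}u_\pm$ yields the two inequalities stated on the full domain, namely \eqref{3.23a} and its $\H^2$-analog. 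Setting $\d=\e^{4/3}$ gives $\d+\e\d^{1/2}=\Odr(\e^{4/3})$ and thus the factor $\e^{2/3}$ required in the thin-layer inequalities \eqref{3.23c} and \eqref{3.23d}.

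The only genuinely delicate step is \eqref{3.23e}, whose right-hand side involves $\|\PS_\e\bs u\|_{\H^1(\hs_\e)}$ and so cannot be handled by the $\H^1(\bs\om)$-norm alone. The key ingredient is that Lemma~\ref{lm3.2} gives the eigenvalues $1$ and $(J_\e^\pm)^2$ for $\mathrm{G}_\pm$, so $\mathrm{G}_\pm^{-1}\geqslant(J_\e^\pm)^{-2}\mathrm{E}$ and consequently
\begin{equation*}
\int\limits_{\om_\pm}(J_\e^\pm)^{-1}|\nabla_{x'}u_\pm|^2\di x'\leqslant\|\nabla\PS_\e\bs u\|^2_{L_2(\hs_\e)}.
\end{equation*}
Fix $\tau_0>0$ independent of $\e$, write $u_\pm(\tau,s)=u_\pm(\tau_0,s)-\int_\tau^{\tau_0}\p_\sigma u_\pm\di\sigma$, and apply Cauchy--Schwarz with weight $J_\e^\pm$,
\begin{equation*}
\Bigl|\int_\tau^{\tau_0}\p_\sigma u_\pm\di\sigma\Bigr|^2\leqslant\Bigl(\int_0^{\tau_0}J_\e^\pm\di\sigma\Bigr)\Bigl(\int_0^{\tau_0}(J_\e^\pm)^{-1}|\p_\sigma u_\pm|^2\di\sigma\Bigr),
\end{equation*}
noting that $\int_0^{\tau_0}J_\e^\pm\di\sigma\leqslant\tau_0+2C\e\tau_0^{1/2}$ is uniformly bounded in $\e$. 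Together with $|\p_\tau u_\pm|\leqslant|\nabla_{x'}u_\pm|$ (the first row of $\mathrm{M}$ in~(\ref{3.8}) being the unit normal $\nu$), this bounds the inner integral by $C\|\PS_\e\bs u\|^2_{\H^1(\hs_\e)}$. The trace $\int_{\p\om}|u_\pm(\tau_0,s)|^2\di s$ is controlled via the usual trace inequality applied on the region $\{\tau>\tau_0/2\}$, where $J_\e^\pm\leqslant C$ uniformly and therefore $\|u_\pm\|_{\H^1(\{\tau>\tau_0/2\})}\leqslant C\|\PS_\e\bs u\|_{\H^1(\hs_\e)}$ by Lemma~\ref{lm3.1}. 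Integrating the resulting pointwise bound for $|u_\pm(\tau,s)|^2$ in $\tau\in(0,\e^{4/3})$ and $s\in\p\om$ then yields \eqref{3.23e}. The main obstacle throughout is the singularity of $J_\e^\pm$ at $\p\om$, neutralized in this final step by working at the fixed interior slice $\tau=\tau_0$ instead of $\tau=0$.
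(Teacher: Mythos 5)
Your proof is correct and follows essentially the same route as the paper's: work in the collar coordinates $(\tau,s)$, use one-dimensional control along the normal together with $J_\e^\pm\leqslant 1+C\e\tau^{-1/2}$ and explicit $\tau$-integration over the layer of width $\e^{4/3}$, and, for \eqref{3.23e}, rely on the key inequality $\int_{\om_\pm}(J_\e^\pm)^{-1}|\nabla_{x'}u_\pm|^2\di x'\leqslant\|\nabla\PS_\e\bs{u}\|^2_{L_2(\hs_\e)}$ from Lemma~\ref{lm3.2} (the left half of \eqref{3.7})—and you correctly read the sets in \eqref{3.23c}, \eqref{3.23d}, \eqref{3.23e} as the thin boundary layer, which is what the paper's own proof estimates. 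The only deviation is cosmetic: for \eqref{3.23e} the paper integrates by parts in $\tau$ against the antiderivative $\mathring{J}_\e^\pm$, absorbs a term, and uses the trace on the $\e$-dependent slice $\tau=\e^{4/3}$, whereas you anchor at the fixed slice $\tau=\tau_0$ and apply a weighted Cauchy--Schwarz; the ingredients and the resulting $\e^{2/3}$ scaling are identical.
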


\begin{proof}
Let $\bs{u}\in\H^1(\bs{\om})$, then
$u_\pm\in\H^1(\bs{\om})$, and  for almost all $P\in\p\om$ the
function $u_\pm\big(P+\,\cdot\,\nu(P)\big)$ belongs to
$\H^1(0,\tau_0)$. Let $\chi=\chi(\tau)$ be an infinitely
differentiable cut-off function vanishing as $\tau\geqslant \tau_0$
and being one as $\tau\leqslant \tau_0/2$. Then $u_\pm=u_\pm\chi$
for $\tau\in[0,\tau_0/2]$, and
\begin{equation*}
u_\pm=\int\limits_{\tau_0}^{\tau} \frac{\p
(u_\pm\chi)}{\p\tau}\di\tau,\quad
|u_\pm\big(P+\tau\nu(P)\big)|^2\leqslant
C\|u_\pm\big(P+\,\cdot\,\nu(P)\big)\|_{\H^1(0,\tau_0)}^2,\quad\tau\in[0,\tau_0/2],
\end{equation*}
where $C$ is a positive constant independent of $P$ and $u_\pm$. We
multiply the last inequality by $J_\e^\pm$, integrate over
$\p\om$, and take into account (\ref{3.9}) to obtain
\begin{equation*}
\int\limits_{\p\om}
\big|u_\pm\big(P+\tau\nu(P)\big)\big|^2|\Det^{-1}
 \mathrm{M}|\di\om\leqslant C\|u_\pm\|_{\H^1(\om^{\tau_0})}^2,
\end{equation*}
where $C$ is a positive constant independent of $P\in\p\om$, and
$u_\pm$. The above estimate, inequality~(\ref{3.12}), the definition
(\ref{3.4}) of $J_\e^\pm$ and the smoothness of $h_\pm$ imply 
\begin{equation}\label{3.14}
\begin{aligned}
&\int\limits_{\om} |u_\pm|^2 J_\e^\pm\di x'= \int\limits_{\om_\d}
|u_\pm|^2 J_\e^\pm\di x'+ \int\limits_{\om^\d} |u_\pm|^2 J_\e^\pm\di
x',\quad \d\in(0,\tau_0/2],
\\
&\int\limits_{\om^\d} |u_\pm|^2 J_\e^\pm\di x'\leqslant C(\d)
\|u_\pm\|_{L_2(\om^\d)}^2, 
\\
&\int\limits_{\om_\d} |u_\pm|^2 J_\e^\pm\di x'
=\int\limits_{0}^{\d}\di \tau \int\limits_{\p\om} |u_\pm|^2 J_\e^\pm
|\Det^{-1}\mathrm{M}|\di\om
\\
&\hphantom{\int\limits_{\om^\d} |u_\pm|^2 J_\e^\pm\di x' = }
\leqslant C\|u_\pm\|_{\H^1(\om)}^2
\int\limits_{0}^{\d}\sqrt{1+C_4\e^2\tau^{-1}}\di\tau,
\end{aligned}
\end{equation}
where the constants $C$ and $C(\d)$ are independent of $\e$ and
$u_\pm$, and $C$ is independent of $\d$. Taking $\d=\tau_0/2$, we
see that $\bs{u}\in L_2(\bs{\om},J_\e\di x')$
and thus the estimate (\ref{3.23a}) holds.
If we now take $\d=\e^{4/3}$ in (\ref{3.14}) instead and use the identity
\begin{equation*}
\int\limits_{0}^{\d}\sqrt{1+\e^2C_4\tau^{-1}}\di
\tau=\mathring{J}_\e^\pm(\d):=\sqrt{\d^2+C_4\e^2\d}+
\frac{C_4}{2}\e^2 \ln
\frac{C_4\e^2+2\d+2\sqrt{\d^2+C_4\e^2\d}}{C_4\e^2},
\end{equation*}
we obtain (\ref{3.23c}).

Let us prove (\ref{3.23e}). We integrate by parts as follows, 
\begin{align*}
&\int\limits_{\om_{\e^{4/3}}
} |u_\pm|^2 J_\e^\pm\di x'\leqslant
C\int\limits_{\p\om} \di \om\int\limits_0^{\e^{4/3}} |u_\pm|^2
J_\e^\pm\di\tau,
\\
&
\begin{aligned}
\int\limits_0^{\e^{4/3}} |u_\pm|^2 J_\e^\pm\di\tau =& |u_\pm|^2
\mathring{J}_\e^\pm\Big|_{\tau=0}^{\tau=\e^{4/3}}-
2\int\limits_{0}^{\e^{4/3}} \mathring{J}_\e^\pm(\tau) \RE u_\pm
\frac{\p \overline{u}_\pm}{\p\tau}\di \tau
\\
\leqslant & \mathring{J}_\e^\pm(\e^{4/3}) \bigg(
|u_\pm|^2|_{\tau=\e^{4/3}}+\int\limits_{0}^{\e^{4/3}} |u_\pm|^2
J_\e^\pm \di\tau+ \int\limits_{0}^{\e^{4/3}} \frac{1}{J_\e^\pm}
\Big|\frac{\p u_\pm}{\p\tau}\Big|^2\di \tau\bigg),
\end{aligned}
\\
&
\begin{aligned}
\int\limits_{\om_{\e^{4/3}}
} |u_\pm|^2 J_\e^\pm\di x'\leqslant
C\e^{4/3} \bigg(& \int\limits_{\p\om}
|u_\pm|^2\big|_{\tau=\e^{4/3}}\di\om
\\
&+ \int\limits_{\om_{\e^{4/3}}} \left(\frac{1}{J_\e^\pm}
|\nabla_{x'} u_\pm|^2+ J_\e^\pm |u_\pm|^2\right)\di x'\bigg).
\end{aligned}
\end{align*}
By the embedding of $\H^1(\om^{\e^{4/3}})$ into $L_2(\{x:
\tau=\e^{4/3}\})$ we have the estimate
\begin{equation*}
\int\limits_{\p\om} |u_\pm|^2\big|_{\tau=\e^{4/3}}\di \om\leqslant
C\|u_\pm\|_{\H^1(\om^{\e^{4/3}})}^2\leqslant C\|\PS_\e
\bs{u}\|_{\H^1(\hs_\e)}^2,
\end{equation*}
where the constants $C$ are independent of $\e$ and $\bs{u}$. These
two last estimates together with (\ref{3.7}) yield (\ref{3.23e}).

To prove the second part of the lemma related to the case
$\bs{u}\in\H^2(\om)$ it is sufficient to note that
since $u_\pm, \nabla_{x'} u_\pm\in\H^1(\om)$, by the first part of
the lemma these functions belong to $L_2(\om,J_\e^\pm\di x')$, and
the estimates (\ref{3.23a}), (\ref{3.23c}) are valid for $\bs{u}$
replaced by $\nabla_{x'} \bs{u}$. This completes the proof.
\end{proof}

\begin{proof}[Proof of Theorem~\ref{th1.1}]
Let $f\in L_2(\hs_\e)$, then $\bs{f}:=\PS_\e f\in L_2(\bs{\om},J_\e\di
x')\subset L_2(\bs{\om})$. Denote
$u^{(\e)}:=(\mathcal{H}_\e-z)^{-1}f$,
$\bs{u}^{(0)}:=(\mathcal{H}_0-z)^{-1}\PS_\e^{-1}f$. By the
definition of $\mathcal{H}_\e$ and $\mathcal{H}_0$ we have
\begin{align}
&\he[u^{(\e)},\vp]-z(u^{(\e)},\vp)_{L_2(\hs_\e)}=(f,\vp)_{L_2(\hs_\e)}
&& \text{for each}\quad\vp\in \H^1(\hs_\e), \label{3.16}
\\
&\ho[\bs{u}^{(0)},\bs{\vp}]-z(\bs{u}^{(0)},\bs{\vp})_{L_2(\bs{\om
})}=(\bs{f},\bs{\vp})_{L_2(\bs{\om})} && \text{for each}\quad\vp\in
\H^1(\bs{\om}).\label{3.17}
\end{align}
Since $\bs{u}^{(0)}\in\H^2(\bs{\om})$, by
Lemmas~\ref{lm3.2} and~\ref{lm3.3}
$u^{(0)}:=\PS_\e\bs{u}^{(0)}\in\H^1(\hs_\e)$. Hence,
$v^{(\e)}:=u^{(\e)}-u^{(0)}\in\H^1(S_\e)$ and this can be
used as a test function in (\ref{3.16}),
\begin{equation*}
\he[u^{(\e)},v^{(\e)}]-z(u^{(\e)},v^{(\e)})_{L_2(S_\e)}=(f,v^{(\e)})_{L_2(S_\e)}.
\end{equation*}
The identity $u^{(\e)}=v^{(\e)}+u^{(0)}$ yields
\begin{equation}\label{3.19}
\begin{aligned}
\|\nabla v^{(\e)}\|_{L_2(S_\e)}^2-&z\|v^{(\e)}\|_{L_2(S_\e)}^2
\\
&=(f,v_\e)_{L_2(S_\e)}-(\nabla u^{(0)},\nabla v^{(\e)})_{L_2(S_\e)}+
z (u^{(0)},v^{(\e)})_{L_2(S_\e)}.
\end{aligned}
\end{equation}
We parameterize $S_\e$ as $x'=x'$, $x_{n+1}=\pm \e h_\pm(x')$, and
use the definition of the scalar product $(\nabla u^{(0)},\nabla
v^{(\e)})_{L_2(S_\e)}$. It implies
\begin{align*}
(f,&v^{(\e)})_{L_2(S_\e)}-(\nabla u^{(0)},\nabla
v^{(\e)})_{L_2(S_\e)}+ z(u^{(0)},v^{(\e)})_{L_2(S_\e)}
\\
= & \hspace*{2.5mm} (f_+,J_\e^+ v^{(\e)}_+)_{L_2(\om_+)}+ (f_-,J_\e^-
v^{(\e)}_-)_{L_2(\om_-)}
\\
& \hspace*{5mm} - \left( (J_\e^+ G_+^{-1}\nabla_{x'} u^{(0)}_+, \nabla_{x'}
v^{(\e)}_+)_{L_2(\om_+)}+(J_\e^- G_-^{-1}\nabla_{x'} u^{(0)}_-, \nabla_{x'} v^{(\e)}_-)_{L_2(\om_-)} \right)
\\
& \hspace*{7.5mm} +z(u^{(0)}_+, J_\e^+ v^{(\e)}_+)_{L_2(\om_+)}+z (u^{(0)}_-,
J_\e^- v^{(\e)}_-)_{L_2(\om_-)},
\end{align*}
where
$\bs{v}^{(\e)}=(v^{(\e)}_+,v^{(\e)}_-)=\PS_\e^{-1}v^{(\e)}$ and
$G^{ij}_\pm$ are the entries of the inverse matrix $G_\pm^{-1}$. We
substitute the last formula into (\ref{3.19}) and then sum it with
(\ref{3.17}), where we take
$\bs{\vp}=\bs{v}^{(\e)}\in\H^1(\bs{\om},J_\e\di x')\subset
\H^1(\bs{\om})$,
\begin{gather}
\|\nabla v^{(\e)}\|_{L_2(S_\e)}^2-z\|v^{(\e)}\|_{L_2(S_\e)}^2=R^+
+R^-,\label{3.20}
\\
\begin{aligned}
R^\pm:=&(f_\pm,(J_\e^\pm-1)v^{(\e)}_\pm)_{L_2(\om)}-
 (J_\e^\pm G_\pm^{-1}\nabla_{x'} u^{(0)}_\pm, \nabla_{x'}
v^{(\e)}_\pm)_{L_2(\om_\pm)}
\\
&-(\nabla_{x'} u^{(0)}_\pm,\nabla_{x'} v^{(\e)}_\pm)_{L_2(\om)} +z
(u^{(0)}_\pm,(J_\e^\pm-1)v^{(\e)}_\pm)_{L_2(\om)}.
\end{aligned}\nonumber
\end{gather}
Let us estimate $R^\pm$ which we shall write as
\begin{gather}
R^\pm=R_1^\pm + R_2^\pm,\label{3.21}
\mbox{ where }
\\
\begin{aligned}
R_1^\pm:=&(f_\pm,(J_\e^\pm-1)v^{(\e)}_\pm)_{L_2(\om^{\d})}-
 (J_\e^\pm G_\pm^{-1}\nabla_{x'} u^{(0)}_\pm, \nabla_{x'}
v^{(\e)}_\pm)_{L_2(\om^{\d})}
\\
&-(\nabla_{x'} u^{(0)}_\pm,\nabla_{x'} v^{(\e)}_\pm)_{L_2(\om^{\d})}
+z (u^{(0)}_\pm,(J_\e^\pm-1)v^{(\e)})_{L_2(\om^{\d})}.
\end{aligned}\nonumber
\\
\begin{aligned}
R_2^\pm:=&(f_\pm,(J_\e^\pm-1)v^{(\e)}_\pm)_{L_2(\om^{\d})}-
 (J_\e^\pm G_\pm^{-1}\nabla_{x'} u^{(0)}_\pm, \nabla_{x'}
v^{(\e)}_\pm)_{L_2(\om^{\d})}
\\
&-(\nabla_{x'} u^{(0)}_\pm,\nabla_{x'} v^{(\e)}_\pm)_{L_2(\om^{\d})}
+z (u^{(0)}_\pm,(J_\e^\pm-1)v^{(\e)}_\pm)_{L_2(\om^{\d})},
\end{aligned}\nonumber
\end{gather}
and $\d:=\e^{4/3}$. As $x'\in\om_\d$, by (\ref{3.12}) we have
\begin{align*}
&\e^2|\nabla_{x'} h_\pm|^2\leqslant C\e^{2/3}, &&
\|\mathrm{G}_\pm^{-1}-\mathrm{E}\|\leqslant C\e^{2/3},
\\
&|J_\e^\pm-1|\leqslant C\e^{2/3}, && |(J_\e^\pm)^{-1}-1|\leqslant
C\e^{2/3}.
\end{align*}
Hereinafter by $C$ we indicate non-essential positive constants
independent of $\e$, $u^{(\e)}$, $\bs{u}^{(0)}$, and $f$. Hence, by
Lemmas~\ref{lm3.2},~\ref{lm3.3} and Schwarz's inequality
\begin{align*}
&\big|(f_\pm,(J_\e^\pm-1)v_\pm^{(\e)})_{L_2(\om_\d)}\big|\leqslant
C\e^{2/3}\|f_\pm\|_{L_2(\om,J_\e^\pm\di
x')}\|v^{(\e)}_\pm\|_{L_2(\om,J_\e^\pm\di x')}
\\
&\hphantom{\big|(f_\pm,(J_\e^\pm-1)v_\pm^{(\e)})_{L_2(\om_\d)}\big|}
\leqslant C\e^{2/3} \|f\|_{L_2(S_\e)}\|v^{(\e)}\|_{L_2(S_\e)},
\\
&\big|z(u^{(0)}_\pm,(J_\e^\pm-1) v_\pm^{(\e)})_{L_2(\om_\d)}\big|
\leqslant C\e^{2/3}
\|u^{(0)}\|_{L_2(\om)}\|v^{(\e)}\|_{L_2(S_\e)},
\\
&\Big|
(\nabla_{x'}u_\pm^{(0)},\nabla_{x'}v_\pm^{(\e)})_{L_2(\om_\d)}-
 (J_\e^\pm G_\pm^{-1}\nabla_{x'} u^{(0)}_\pm, \nabla_{x'}
v^{(\e)}_\pm)_{L_2(\om_\d)}\Big|
\\
&\hphantom{ \Big| (\nabla_{x'}u_\pm^{(0)},\nabla_{x'}v_\pm^{(\e)})
_{L_2(\om_\d)}-} \leqslant
C\e^{2/3}\|\bs{u}^{(0)}\|_{\H^1(\bs{\om})}\|\nabla_{x'}
v_\pm^{(\e)}\|_{L_2(\om_\d)}
\\
&\hphantom{ \Big| (\nabla_{x'}u_\pm^{(0)},\nabla_{x'}v_\pm^{(\e)})
_{L_2(\om_\d)}-} \leqslant
C\e^{2/3}\|\bs{u}^{(0)}\|_{\H^1(\bs{\om})}\|J_\e^{-\frac{1}{2}}\nabla_{x'}
\bs{v}^{(\e)}\|_{L_2(\bs{\om}_\d)}
\\
&\hphantom{ \Big| (\nabla_{x'}u_\pm^{(0)},\nabla_{x'}v_\pm^{(\e)})
_{L_2(\om_\d)}-} \leqslant
C\e^{2/3}\|\bs{u}^{(0)}\|_{\H^1(\bs{\om})}\|J_\e^{-1}\nabla_{x'}
\bs{v}^{(\e)}\|_{L_2(\bs{\om}_\d,J_\e\di x')}
\\
&\hphantom{ \Big| (\nabla_{x'}u_\pm^{(0)},\nabla_{x'}v_\pm^{(\e)})
_{L_2(\om_\d)}-} \leqslant
C\e^{2/3}\|\bs{u}^{(0)}\|_{\H^1(\bs{\om})}\|\nabla
v^{(\e)}\|_{L_2(S_\e)},
\end{align*}
and therefore
\begin{equation}\label{3.22}
|R_1^+ + R_1^-|\leqslant C\e^{2/3} \|\bs{u}^{(0)}\|_{\H^1(\bs{\om})}
\|v^{(\e)}\|_{\H^1(S_\e)}.
\end{equation}
To estimate $R_2^\pm$ we employ (\ref{3.23a}), (\ref{3.23c}),
(\ref{3.23e}). We begin with the first term in $R_2^\pm$ applying again
Schwarz's inequality and~(\ref{3.23e}) to obtain
\begin{equation}\label{3.25}
\begin{aligned}
|(f_\pm,(J_\e^\pm-1)v^{(\e)}_\pm)_{L_2(\om^{\d})}|\leqslant &
\|f_\pm\|_{L_2(\om^\d,J_\e^\pm\di x')}
\|\big(1-(J_\e^\pm)^{-1}\big)v^{(\e)}_\pm\|_{L_2(\om^\d, J_\e^\pm\di
x')}
\\
\leqslant & \|f\|_{L_2(S_\e)} \|v^{(\e)}_\pm\|_{L_2(\om^\d,
J_\e^\pm\di x')}
\\
\leqslant & C\e^{2/3} \|f\|_{L_2(S_\e)} \|v^{(\e)}\|_{\H^1(S_\e)}.
\end{aligned}
\end{equation}
Employing (\ref{3.7}), (\ref{3.23a}) and (\ref{3.23e}) in the same way we get two more estimates,
\begin{equation}\label{3.26}
\begin{aligned}
&
\begin{aligned}
|z(u^{(0)}_\pm,(J_\e^\pm-1) v^{(\e))})_{L_2(\om^\d)}|\leqslant & C
\|u^{(0)}_\pm\|_{L_2(\om^\d,J_\e^\pm\di x')}
\|v^{(\e)}_\pm\|_{L_2(\om^\d,J_\e^\pm\di x')}
\\
\leqslant & C\e^{2/3} \|\bs{u}^{(0)}\|_{\H^1(\bs{\om})}
\|v^{(\e)}\|_{\H^1(S_\e)},
\end{aligned}
\\
&
\begin{aligned}
|(\nabla_{x'}u_\pm^{(0)},\nabla_{x'}
v_\pm^{(\e)})_{L_2(\om^\d)}|\leqslant &
\|(J_\e^\pm)^{\frac{1}{2}}\nabla_{x'} u_\pm^{(0)}\|_{L_2(\om^\d)}
\|(J_\e^\pm)^{-\frac{1}{2}}\nabla_{x'} v_\pm^{(\e)}\|_{L_2(\om^\d)}
\\
\leqslant & C\e^{2/3} \|\bs{u}^{(0)}\|_{\H^2(\bs{\om})} \|\nabla
v^{(\e)}\|_{L_2(S_\e)}.
\end{aligned}
\end{aligned}
\end{equation}
Since
\begin{equation*}
(G_\pm^{-1} \nabla_{x'} u^{(0)}_\pm,\nabla_{x'}
v^{(\e)}_\pm)_{\mathds{R}^n}=\nabla \PS_\e\bs{u}^{(0)}\cdot \nabla v^{(\e)},
\end{equation*}
by Schwarz's inequality we have
\begin{align*}
\Big|(G_\pm^{-1} \nabla_{x'} u^{(0)}_\pm,\nabla_{x'}
v^{(\e)}_\pm)_{L_2(\om^{\d})}\Big|\leqslant & \|\nabla
v^{(\e)}\|_{L_2(S_\e)} (G_\pm^{-1} \nabla_{x'} u^{(0)}_\pm,\nabla_{x'}
u^{(0)}_\pm)_{L_2(\om^{\d})}^{\frac{1}{2}}
\\
\leqslant & \|\nabla v^{(\e)}\|_{L_2(S_\e)}
\|(J_\e^\pm)^{\frac{1}{2}}\nabla_{x'} u_\pm^{(0)}\|_{L_2(\om^\d)}.
\end{align*}
Here we have used the inequality
\begin{equation*}
\sum\limits_{i,j=1}^n G^{ij}_\pm \xi_i \xi_j\leqslant
\sum\limits_{i=1}^n|\xi_i|^2,
\end{equation*}
which follows from Lemma~\ref{lm3.2}. Using~(\ref{3.23d}) we get
\begin{align*}
\Big|(G_\pm^{-1} \nabla_{x'} u^{(0)}_\pm,\nabla_{x'}
v^{(\e)}_\pm)_{L_2(\om^{\d})}\Big|\leqslant & \|\nabla
v^{(\e)}\|_{L_2(S_\e)} \|\bs{u}^{(0)}\|_{\H^1(\bs{\om}^\d)}
\\
\leqslant & C\e^{2/3} \|\nabla v^{(\e)}\|_{L_2(S_\e)}
\|\bs{u}^{(0)}\|_{\H^2(\bs{\om})},
\end{align*}
which with~(\ref{3.25}) and (\ref{3.26}) yield
\begin{equation*}
|R_2^+ + R_2^-|\leqslant C\e^{2/3} \|\bs{u}^{(0)}\|_{\H^2(\bs{\om})}
\|v^{(\e)}\|_{\H^1(S_\e)}.
\end{equation*}
Together with (\ref{2.7a}), (\ref{3.20}), (\ref{3.21}), (\ref{3.22})
it follows that 
\begin{align*}
\big|\|\nabla v^{(\e)}\|_{L_2(S_\e)}^2-z\|v^{(\e)}\|_{L_2(S_\e)}^2
\big| &\leqslant C \e^{2/3} \|\bs{u}^{(0)}\|_{\H^2
(\bs{\om})}
\|v^{(\e)}\|_{\H^1(S_\e)}
\\
&\leqslant C\e^{2/3}
\|\bs{f}\|_{L_2(\bs{\om})}\|v^{(\e)}\|_{\H^1(S_\e)}.
\end{align*}
Since
\begin{equation*}
\big|\|\nabla v^{(\e)}\|_{L_2(S_\e)}^2-z\|v^{(\e)}\|_{L_2(S_\e)}^2
\big| \geqslant C\|v^{(\e)}\|_{\H^1(S_\e)}^2,
\end{equation*}
we arrive at (\ref{1.8}), completing the proof.
\end{proof}

\begin{remark}
 The proof above uses the estimates from Lemma~\ref{lm3.3} which include a measure of the boundary behaviour
 by means of the weight function $J_{\e}$. A different approach which may
 also be used to prove convergence of the resolvent in similar situations is based on inequalities of
 Hardy type instead, possibly allowing for a better control of the behaviour near the boundary -- see~\cite{krzu}
 for an illustration of this principle.
\end{remark}

In the proof of Theorem~\ref{th2.3} in the next section we shall use
the following auxiliary lemma which is convenient to prove in this section.

\begin{lemma}\label{lm3.4}
Let $\l$ be a $m$-multiple eigenvalue of $\Ho$, and $\l_i(\e)$,
$i=1,\ldots,m$, be the eigenvalues of $\He$ taken counting
multiplicity and converging to $\l$, and $\psi_\e^{(i)}$ be the
associated eigenfunctions orthonormalized in $L_2(S_\e)$. For $z$
close to $\l$ the representation
\begin{equation*}
(\He-z)^{-1}=\sum\limits_{i=1}^{m}
\frac{\psi_\e^{(i)}}{\l_i(\e)-z}(\cdot,
\psi_\e^{(i)})_{L_2(S_\e)} + \mathcal{R}_\e(z)
\end{equation*}
holds true, where the operator $\mathcal{R}_\e(z):
L_2(S_\e)\to\H^1(S_\e)$ is bounded uniformly in $\e$ and $z$. The
range of $\mathcal{R}_\e(z)$ is orthogonal to all $\psi_\e^{(i)}$,
$i=1,\ldots,m$.
\end{lemma}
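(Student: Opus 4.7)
The plan is to identify the stated formula as the standard spectral decomposition of the self-adjoint compact-resolvent operator $\He$, and then to control the reduced resolvent $\mathcal{R}_\e(z):=(\He-z)^{-1}(I-\mathcal{P}_\e)$ uniformly in $\e$ and $z$ by combining the uniform resolvent convergence of Theorem~\ref{th1.1} with the spectral convergence of Theorem~\ref{th2.2}.

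I would begin by choosing two disks centred at $\l$: a smaller closed disk $D$ in which $z$ will be allowed to vary, and a larger one whose bounding circle $\G$ is disjoint from $\spec(\Ho)$ and whose interior contains no eigenvalue of $\Ho$ other than $\l$. Theorem~\ref{th1.1} then forces $(\He-z)^{-1}$ to be uniformly bounded on $\G$ for small $\e$, so that $\G\cap\spec(\He)=\emptyset$ for such $\e$. The Riesz projector
\[
\frac{1}{2\pi\iu}\oint_\G (\He-z)^{-1}\di z
\]
is therefore well-defined and, by Theorem~\ref{th2.2}, equals the orthogonal projector $\mathcal{P}_\e:=\sum_{i=1}^m \psi_\e^{(i)}(\,\cdot\,,\psi_\e^{(i)})_{L_2(S_\e)}$. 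The representation
\[
(\He-z)^{-1}=\sum_{i=1}^m \frac{\psi_\e^{(i)}}{\l_i(\e)-z}(\,\cdot\,,\psi_\e^{(i)})_{L_2(S_\e)}+\mathcal{R}_\e(z)
\]
is then immediate from the spectral theorem applied to $\He$, and the inclusion $\mathrm{Range}\,\mathcal{R}_\e(z)\perp \psi_\e^{(i)}$ follows from the fact that $(I-\mathcal{P}_\e)$ projects onto an $\He$-invariant subspace orthogonal to each $\psi_\e^{(i)}$.

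The step I expect to be the main obstacle is the uniform $L_2\to L_2$ control of $\mathcal{R}_\e(z)$ for $z\in D$, which requires ruling out spurious eigenvalues of $\He$ entering $D$. Since $\G$ is disjoint from $\spec(\He)$ for small $\e$ and the spectrum is discrete, the eigenvalues of $\He$ other than $\l_1(\e),\ldots,\l_m(\e)$ lie outside $\G$; combined with $z\in D$ this yields a uniform spectral gap $\d>0$ such that $\dist\big(z,\,\spec(\He)\setminus\{\l_1(\e),\ldots,\l_m(\e)\}\big)\geqslant \d$ for all $z\in D$ and all small $\e$. The self-adjoint restriction of $\He$ to $\mathrm{Range}(I-\mathcal{P}_\e)$ has spectrum equal to this complementary part of $\spec(\He)$, so the functional calculus yields $\|\mathcal{R}_\e(z)\|_{L_2(S_\e)\to L_2(S_\e)}\leqslant \d^{-1}$ uniformly in $\e$ and $z\in D$.

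To promote this to the claimed $L_2\to\H^1$ bound I would use the resolvent identity with a fixed $z_0\in\CC\setminus\RR$,
\[
\mathcal{R}_\e(z)=(\He-z_0)^{-1}(I-\mathcal{P}_\e)+(z-z_0)(\He-z_0)^{-1}\mathcal{R}_\e(z).
\]
Theorem~\ref{th1.1} guarantees that $(\He-z_0)^{-1}$ is uniformly bounded $L_2(S_\e)\to\H^1(S_\e)$; coupling this with $\|I-\mathcal{P}_\e\|_{L_2\to L_2}\leqslant 1$ and the $L_2\to L_2$ bound on $\mathcal{R}_\e(z)$ just obtained, both summands on the right are uniformly bounded $L_2(S_\e)\to\H^1(S_\e)$, completing the proof.
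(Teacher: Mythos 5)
Your reduction step is essentially the paper's: there too one fixes a disk $B_\d(\l)$ around $\l$, uses Theorem~\ref{th2.2} to conclude that for small $\e$ the only eigenvalues of $\He$ near $\l$ are $\l_1(\e),\ldots,\l_m(\e)$ with the rest of $\discspec(\He)$ at distance at least $\d/2$, and defines $\mathcal{R}_\e(z)$ as the part of the resolvent acting in the orthogonal complement $V_\e$ of the $\psi_\e^{(i)}$ (citing \cite{K}), which gives the uniform $L_2\to L_2$ bound $\|\mathcal{R}_\e(z)\|\leqslant 2/\d$. One caveat in your version: the claim that Theorem~\ref{th1.1} forces $(\He-z)^{-1}$ to be uniformly bounded on the contour $\G$ is not available as stated, since $\G$ crosses the real axis and Theorem~\ref{th1.1} is formulated only for $z\in\CC\setminus\RR$ with a constant $C(z)$ not claimed uniform in $z$; the absence of spurious spectrum in the disk and the uniform gap must be taken from Theorem~\ref{th2.2} (as the paper does, and as you also invoke), so this is a misattribution rather than a fatal flaw. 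The genuinely different step is the promotion to the $L_2\to\H^1$ bound. You use the resolvent identity at a fixed non-real $z_0$ together with $\e$-uniform boundedness of $(\He-z_0)^{-1}:L_2(\hs_\e)\to\H^1(\hs_\e)$; note that Theorem~\ref{th1.1} yields this only once one also knows that the comparison operator $\PS_\e(\Ho-z_0)^{-1}\PS_\e^{-1}$ is bounded uniformly in $\e$, which indeed follows from Lemmas~\ref{lm1.1},~\ref{lm3.1} and~\ref{lm3.3}, so your argument closes within the paper's framework. The paper instead argues directly and more elementarily: for $u_\e:=\mathcal{R}_\e(z)f$ one has $(\He-z)u_\e=f_\e$ with $\|f_\e\|_{L_2(\hs_\e)}\leqslant\|f\|_{L_2(\hs_\e)}$, and the form identity $\|\nabla u_\e\|_{L_2(\hs_\e)}^2=z\|u_\e\|_{L_2(\hs_\e)}^2+(f_\e,u_\e)_{L_2(\hs_\e)}$ combined with the already established $L_2$ bound gives $\|\nabla u_\e\|_{L_2(\hs_\e)}^2\leqslant C(\d)\|f\|_{L_2(\hs_\e)}^2$. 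Both routes are valid; the paper's energy estimate is shorter and self-contained, while yours buys the $\H^1$ bound from the uniform resolvent convergence machinery at the cost of the extra uniformity check at $z_0$.
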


\begin{proof}
We choose a fixed $\d$ so that the disk $B_\d(\l):=\{z:
|z-\l|<\d\}$ contains no eigenvalues of $\mathcal{H}_0$ except
$\l$ and
\begin{equation*}
\dist\{\p B_\d(\l), \discspec(\Ho)\}\geqslant \d.
\end{equation*}
Then, by Theorem~\ref{th2.2}, for sufficiently small $\e$ this disk
contains the eigenvalues $\l_i(\e)$, $i=1,\ldots,m$, and no other
eigenvalues of $\He$, and
\begin{equation}\label{3.31}
\dist\big\{B_\d(\l), \discspec(\He)\setminus
\{\l_i(\e),i=1,\ldots,m\} \big\}\geqslant \frac{\d}{2}.
\end{equation}
Denote by $V_\e$ the orthogonal complement to $\psi_\e^{(i)}$,
$i=1,\ldots,m$, in $L_2(S_\e)$. By \cite[Ch. V, Sec. 3.5, Eqs.
(3.21)]{K} the representation (3.29) holds true, where
$\mathcal{R}_\e(z)$ is the part of the resolvent $(\He-z)^{-1}$
acting in $V_\e$ and
\begin{equation}\label{3.32}
\|\mathcal{R}_\e(z)\|_{V_\e\to V_\e}\leqslant
\frac{1}{\dist\big\{B_\d(\l), \discspec(\He)\setminus
\{\l_i(\e),i=1,\ldots,m\} \big\}}\leqslant \frac{2}{\d}
\end{equation}
for $z\in B_\d(\l)$, where we have used (\ref{3.31}). Hence, the
range of $\mathcal{R}_\e(z)$ is orthogonal to $\psi_\e^{(i)}$,
$i=1,\ldots,m$. It is easy to check that the function
$u_\e:=\mathcal{R}_\e(z) f$, $f\in L_2(S_\e)$ solves the equation
\begin{equation*}
(\He-z)u_\e=f_\e,\quad f_\e:=f-\sum\limits_{i=1}^{m}\psi_\e^{(i)}
(f,\psi_\e^{(i)})_{L_2(S_\e)},\quad \|f_\e\|_{L_2(S_\e)}\leqslant
\|f\|_{L_2(S_\e)}.
\end{equation*}
Hence, by the definition of $\He$ and (\ref{3.32})
\begin{align*}
\|\nabla
u_\e\|_{L_2(S_\e)}^2=&z\|u_\e\|_{L_2(S_\e)}^2+(f_\e,u_\e)_{L_2(S_\e)}
\leqslant
|z|\|u_\e\|_{L_2(S_\e)}^2+\|f_\e\|_{L_2(S_\e)}\|u_\e\|_{L_2(S_\e)}
\\
&\leqslant C(\d)\|f\|_{L_2(S_\e)}^2,
\end{align*}
where the constant $C(\d)$ is independent of $\e$ and $f$. The last estimate and (\ref{3.32}) complete the proof.
\end{proof}

\section{Asymptotic expansions\label{sec4}}

In this section we give the proof of Theorem~\ref{th2.3} which will be divided
into two parts. We first build the asymptotic expansions
formally, where the core of the formal construction is the
method of matching asymptotic expansions \cite{Il}. The second part is devoted to the justification of the asymptotics, i.e., obtaining estimates for the error terms.

The formal construction consists of determining the outer and inner expansions on the base of the perturbed eigenvalue problem and the matching of
these expansions. The outer expansion is used to approximate the perturbed eigenfunctions outside a small neighborhood of $\p\om$. It is
constructed in terms of the variables $x'$ using the first parametrization of $\hs_\e$ given in the previous sections. In a vicinity of $\p\om$ the
perturbed eigenfunctions are approximated by the inner expansion which is based on the second parametrization of $\hs_\e$ and is constructed in terms
of the variables $(\xi,s)$.

\subsection{Outer expansion: first term}

By Theorem~\ref{th2.2} there exist exactly $m$ eigenvalues of $\He$
converging to $\l$ counting multiplicities. We denote these
eigenvalues by $\l_k(\e)$, $k=1,\ldots,m$, while the symbols
$\psi_\e^{(k)}$ will denote the associated eigenfunctions. We
construct the asymptotics for $\l_k(\e)$ as 
\begin{equation}\label{4.1}
\l_k(\e)=\l+\e^2\ln\e\,
\mu_k
\left(\frac{1}{\ln\e}\right) 
+\ldots
\end{equation}
Hereinafter terms like $\ln\e A$ are understood as $(\ln\e) A$. In
accordance with the method of matching asymptotic expansions we
form the asymptotics for $\psi_\e^{(k)}$ as the sum of outer
and inner expansions. The outer expansion is built as
\begin{equation}\label{4.2}
\psi_{\e,ex}^{(k)}=\PS_\e
(\bs{\psi}_k+ \e^2\ln\e\,
\bs{\phi}_k+\ldots),
\end{equation}
where $\bs{\phi}_k=(\phi_+^{(k)},\phi_-^{(k)})$,
$\phi_\pm^{(k)}=\phi_\pm^{(k)}(x',\e)$, and the eigenfunctions $\bs{\psi}_k$
are chosen as described before the statement of Theorem~\ref{th2.3} in Sec.~2. We also recall
that these functions depend on $\e$ in the case where $\l$ is a multiple eigenvalue.

We substitute the  identities~(\ref{4.1}),~(\ref{4.2}),~and~(\ref{5.1})
into the eigenvalue equation
\begin{equation}\label{4.5}
\He\psi_\e^{(k)}=\l_k(\e)\psi_\e^{(k)},
\end{equation}
and take into account the eigenvalue equations for
$\bs{\psi}_i$. It implies the equations for
$\bs{\phi}_k$, namely,
\begin{equation}
\begin{aligned}
&(-\D_{x'}-\l)\phi_\pm^{(k)}=\frac{1}{\ln\e} f_{2,\pm}^{(k)}+
\mu_k\psi_\pm^{(k)},\quad x'\in\om_\pm,
\qquad
f_{2,\pm}^{(k)}:=\mathcal{H}_\pm^{(2)}\psi_\pm^{(k)},
\\
&
\mathcal{H}_\pm^{(2)}:=-\Div_{x'} \mathrm{Q}_\pm\nabla_{x'}
-
\frac{|\nabla_{x'} h_\pm|^2}{2}\D_{x'}
+\frac{1}{2}\Div_{x'}
|\nabla_{x'} h_\pm|^2 \nabla_{x'}.
\end{aligned} \label{4.6b}
\end{equation}

The functions $\psi_\pm^{(i)}$ are infinitely differentiable in
$\overline{\om}_\pm$, and thus
\begin{equation}\label{4.7}
\psi_\pm^{(k)}(x',\e)=\Psi_k^{(0)}(P,\e) \pm \Psi_k^{(1)}(P,\e)\tau +
\Psi_k^{(2,\pm)}(P,\e)\tau^2+\Odr(\tau^3),\quad P\in\p\om,
\end{equation}
as $\tau\to+0$, where by the definition of the domain of $\Ho$
\begin{align*}
&\Psi_k^{(0)}:=\psi_+^{(k)}\big|_{\p\om}=
\psi_-^{(k)}\big|_{\p\om}, &&
\Psi_k^{(1)}:=\frac{\p\psi_+^{(k)}}{\p\tau}\bigg|_{\p\om}=
-\frac{\p\psi_-^{(k)}}{\p\tau}\bigg|_{\p\om},
\\
&\Psi_k^{(2,\pm)}:=\frac{1}{2}
\frac{\p^2\psi_\pm^{(k)}}{\p\tau^2}\bigg|_{\p\om},&&
\Psi_k^{(j)},\Psi_k^{(2,\pm)}\in C^\infty(\p\om).
\end{align*}
The functions $\Psi_k^{(i)}$ depend on $\e$ only if $\l$ is a multiple eigenvalue, since the same is true for the  functions $\bs{\psi}_k$.

In view of the identity (\ref{4.55}) we rewrite (\ref{4.7}) as
\begin{align}
&\psi_\pm^{(k)}(x',\e)= \Psi_k^{(0)}(P,\e)\pm \Psi_k^{(1)}(P,\e)\z^2
 +\Psi_k^{(2,\pm)}(P,\e)\z^4+\Odr(\z^6), && \z\to+0. \nonumber
\\
&\psi_\pm^{(k)}(x',\e)= \Psi_k^{(0)}(P,\e)\pm
\e^2\Psi_k^{(1)}(P,\e)\xi^2
+\e^4\Psi_k^{(2,\pm)}(P,\e)\xi^4+\Odr(\e^6\xi^6), && \e\xi\to0.
\label{5.5a}
\end{align}

\subsection{Inner expansion}

In accordance with the method of matching asymptotic expansions the identities (\ref{4.2}), (\ref{5.5a}) yield that the inner expansion for the eigenfunctions $\psi_\e^{(k)}$ should read  as follows,
\begin{equation}\label{4.12}
\psi_{\e,in}^{(k)}(\xi,P,\e)=\sum\limits_{i=0}^{4} \e^i
v_i^{(k)}(\xi,P,\e)+\ldots,
\end{equation}
where the coefficients must satisfy the following asymptotics as
$\xi\to\pm\infty$
\begin{align}
& v_0^{(k)}(\xi,P,\e)=\Psi_k^{(0)}(P,\e)+o(1),\label{4.13}
\\
&v_1^{(k)}(\xi,P,\e)=o(|\xi|),\label{4.18a}
\\
&v_2^{(k)}(\xi,P,\e)=\pm \Psi_k^{(1)}(P,\e)\xi^2+o(|\xi|^2),\label{4.14}
\\
&v_3^{(k)}(\xi,P,\e)=o(|\xi|^3),\nonumber
\\
&v_4^{(k)}(\xi,P,\e)=\Psi_k^{(2,\pm)}(P,\e)\xi^4+o(|\xi|^4). \nonumber
\end{align}
These asymptotics mean that the first term of the outer expansion is matched with the inner expansion.

We  substitute (\ref{4.1}),  (\ref{4.12}), (\ref{4.32}),
 (\ref{4.41}) into the eigenvalue equation (\ref{4.5})
and equate the coefficients of $\e^{-4}$. This implies the equation for $v_0^{(k)}$,
\begin{equation*}
\mathcal{L}_{-4} v_0^{(k)}\equiv -\frac{1}{\sqrt{4\xi^2+b_1^2}}
\frac{\p}{\p\xi} \frac{1}{\sqrt{4\xi^2+b_1^2}} \frac{\p
v_0^{(k)}}{\p\xi}=0\quad\text{on}\quad \mathds{R}\times \p\om.
\end{equation*}
The solution to the last equation satisfying (\ref{4.13}) is
obviously as follows,
\begin{equation}\label{4.45}
v_0^{(k)}(\xi,P,\e)\equiv \Psi_k^{(0)}(P,\e).
\end{equation}
We then substitute this identity and (\ref{4.1}), (\ref{4.12}), (\ref{4.32}), (\ref{4.39a}),
(\ref{4.40a}), (\ref{4.32}) into (\ref{4.5}) and
equate the coefficients at $\e^i$, $i=-3,\ldots,0$, leading us
to the equations for $v_i^{(k)}$, $i=1,\ldots,4$,
\begin{align}
& \mathcal{L}_{-4} v_1^{(k)}=0\quad \text{on}\quad
\mathds{R}\times\p\om, \label{4.46a}
\\
& \mathcal{L}_{-4} v_2^{(k)}=0\quad \text{on}\quad
\mathds{R}\times\p\om, \label{4.46}
\\
&\mathcal{L}_{-4} v_3^{(k)}+ \mathcal{L}_{-3} v_2^{(k)} +
\mathcal{L}_{-2} v_1^{(k)}=0 \quad \text{on}\quad
\mathds{R}\times\p\om, \label{4.46c}
\\
&\mathcal{L}_{-4} v_4^{(k)}+ \mathcal{L}_{-3} v_3^{(k)} +
\mathcal{L}_{-2} v_2^{(k)} +
 \mathcal{L}_{-1} v_1^{(k)} +  \mathcal{L}_0 v_0^{(k)} =\l v_0^{(k)} \quad
\text{on}\quad \mathds{R}\times\p\om, \label{4.46d}
\end{align}
were we have used that
\begin{equation*}
\mathcal{L}_i v_0^{(k)}\equiv 0,\quad i=-3,\ldots,-1,
\end{equation*}
due to (\ref{4.39a}), (\ref{4.40a}), (\ref{4.45}). The only solution
to (\ref{4.46a}) satisfying (\ref{4.18a}) is  independent of $\xi$,
\begin{equation}\label{4.46b}
v_1^{(k)}(\xi,P,\e)\equiv C_1^{(k,0)}(P,\e),
\end{equation}
where $C_1^{(k,0)}$ is an unknown function to be determined.

The equation (\ref{4.46}) can be solved, and the solution satisfying
(\ref{4.14}) is  
\begin{align}
v_2^{(k)}(\xi,P,\e)=&\Psi_k^{(1)}(P,\e) X_1(\xi, b_1(P))+ C_2^{(k,0)}(P,\e),
\label{4.47}
\\
X_1(\xi,b):=&\frac{1}{2}\xi (4\xi^2+b^2)^{\frac{1}{2}}+\frac{b^2}{4}\ln
\big(2 \xi+(4\xi^2+b^2)^{\frac{1}{2}}\big)-\frac{b^2}{4}\ln b,\label{4.48a}
\end{align}
where $C_2^{(k,0)}$ is an unknown function to be determined.

In view of (\ref{4.46b}), (\ref{4.47}), (\ref{4.39a}),
(\ref{4.40a}) and (\ref{4.46}), equation (\ref{4.46c}) may be written as
\begin{equation*}
\b_{-4} \frac{\p}{\p\xi} \b_{-4} \frac{\p v_3^{(k)}}{\p\xi} =
-\b_{-4} \frac{\p}{\p\xi} \b_{-3} \frac{\p
v_2^{(k)}}{\p\xi}\quad\text{on}\quad \mathds{R}\times\p\om.
\end{equation*}
Employing the formulas (\ref{4.41}), (\ref{4.47}) and (\ref{4.48a}), we
solve the last equation,
\begin{equation}\label{4.49a}
\begin{aligned}
v_3^{(k)}(\xi,P,\e)=&\frac{\Psi^{(k,1)}_0(P,\e)b_1(P)b_2(P)}{2\b_{-4}(\xi,P)}+
C_3^{(k,1)}(P,\e)X_1(\xi)+C_3^{(k,0)}(P,\e)
\\
=&\frac{1}{2} \Psi_k^{(1)}(P,\e)b_1(P) b_2(P) (4\xi^2+b_1^2(P))^{\frac{1}{2}}
\\
&+
C_3^{(k,1)}(P,\e)X_1(\xi)+C_3^{(k,0)}(P,\e),
\end{aligned}
\end{equation}
where $C_3^{(k,1)}$ and $C_3^{(k,0)}$ are unknown functions to be determined.

We substitute (\ref{4.46b}), (\ref{4.47}), (\ref{4.48a}),
(\ref{4.49a}), (\ref{4.39a}), (\ref{4.40a}), (\ref{4.41b}),
(\ref{4.27a}) and (\ref{4.41}) into equation~(\ref{4.46d}) and then
solve it to obtain
\begin{align*}
v_4^{(k)}=&\frac{1}{16}\Psi^{(k,1)}_0\xi \bigg( K
(4\xi^2+b_1^2)^{\frac{3}{2}}
+ 12 b_1b_3(4\xi^2+b_1^2)^{\frac{1}{2}}
+ \frac{8 b_2^2(8\xi^2+3 b_1^2)}{(4\xi^2+b_1^2)^{\frac{1}{2}}}
\bigg)
\\
&+\frac{1}{2} C_3^{(k,1)} b_1 b_2 (4\xi^2+b_1^2)^\frac{1}{2}
-\frac{1}{2}X_1^2 (\D_{\p\om}+\l)\Psi_k^{(0)}
\\
&+ \frac{1}{2} X_2  b_1\nabla b_1\cdot \nabla \Psi_k^{(0)}+ C_4^{(k,1)} X_1  +C_4^{(k,0)},
\end{align*}
where $X_1=X_1(\xi,b_1(P))$,
\begin{align*}
&X_2=X_2(\xi,b):=\xi^2-b^2 X_3\bigg(\frac{2\xi+\sqrt{4\xi^2+b^2} }{b}\bigg),
\\
&X_3(z):=\frac{1}{8}\ln^2z+\frac{1}{16}\left(z^2- \frac{1}{z^2}\right)\ln z-\frac{1}{32} \left(z^2 + \frac{1}{z^2}\right),
\end{align*}
and $C_4^{(k,0)}=C_4^{(k,0)}(P,\e)$ and $C_4^{(k,1)}=C_4^{(k,1)}(P,\e)$ are unknown functions to be determined.

To determine the coefficient $\bs\phi^{(k)}$ in the outer expansion and the functions  $C^{k,j}_i$ in the inner one, we should match the constructed
functions $v_i^{(k)}$ with the outer expansion.  In order to do it, we must find the asymptotics for the functions $v_i^{(k)}$ as
$\xi\to\pm\infty$. We observe that the functions $X_1,X_2\in C^\infty(\mathds{R}\times(0,+\infty))$ satisfy
the identities
\begin{align*}
X_1(\xi,b)=&\pm\xi^2\pm\frac{b^2}{8} (2\ln|\xi|+1+4\ln 2-2\ln b) + \Odr(\xi^{-2}), && \xi\to\pm\infty,
\\
X_2(\xi,b)=&\xi^2\left(\frac{3}{2}-2\ln 2+ \ln b-\ln|\xi|\right)+\Odr(\ln^2|\xi|),&& \xi\to\pm\infty,
\end{align*}
uniformly in $b\geqslant b_0>0$, with $b_0$ any fixed constant. Taking
these asymptotics into account, we write the asymptotics for $v_i^{(k)}$ as $\xi\to\pm\infty$ and then pass
to the variables $(\tau,P)$,
\begin{align*}
\sum\limits_{i=0}^{4} &\e^i v_i^{(k)}(\xi,P,\e)=\Psi_k^{(0)}(P,\e)\pm \Psi_k^{(1)}(P,\e)\tau
\\
&+
\frac{1}{2}\left(\pm  \Psi_k^{(1)}(P,\e) K(P)-\D_{\p\om}\Psi_k^{(0)}(P,\e)- \l\Psi_k^{(0)}(P,\e)\right)\tau^2
\\
&+\e(\pm C_3^{(k,1)}(P,\e)\tau+C_1^{(k,0)})
\\
&+\e^2\big(\ln\e\, W_{2,1,\pm}^{(k)}(x',\e)
+ W_{2,0,\pm}^{(k)}(x',\e)\big)
+\Odr(\e^3+\e^4\tau^{-1}),
\end{align*}
where
\begin{align}
&W_{2,1,\pm}^{(k)}:=\frac{1}{4} b_1^2\left( \mp \Psi_k^{(1)}+\tau \left(\D_{\p\om} +\frac{2}{b_1} \nabla b_1\cdot\nabla +\l \right)\Psi_k^{(0)}
\right),\label{4.56a}
\\
&
\begin{aligned}
W_{2,0,\pm}^{(k)}:=&\pm \frac{1}{8}b_1^2\Psi_k^{(1)} \ln \tau
 \pm \frac{b_1^2}{8}(1+4\ln 2-2\ln b_1) \Psi_k^{(1)}+C_2^{(k,0)}
\\
&+\Psi_k^{(1)}b_1 b_2 \tau^{1/2}
 -\frac{1}{8}b_1^2\tau\ln\tau \Big(\D_{\p\om}
+ \frac{2}{b_1}\nabla b_1\cdot\nabla + \l
\Big)\Psi_k^{(0)}
\\
&+\tau \bigg(-\frac{1}{8} b_1^2(1+4\ln 2-2\ln b_1) (\D_{\p\om} + \l)\Psi_k^{(0)}
\\
&\hphantom{+\tau \bigg(}-\frac{1}{2} \Big(2\ln 2-\ln b_1-\frac{3}{2} \Big) b_1 \nabla b_1\cdot\nabla \Psi_k^{(0)}
\\
&\hphantom{+\tau \bigg(}\pm \frac{1}{16} (3Kb_1^2+32 b_2^2+24b_1b_3)\Psi_k^{(1)}\pm C_4^{(k,1)}\bigg).
\end{aligned}
\label{4.56b}
\end{align}
Taking into account the obtained formulas and (\ref{4.2}), in accordance with the method of matching asymptotic expansions we conclude that
\begin{equation}\label{4.56c}
C_3^{(k,1)}(P,\e)=C_1^{(k,0)}(P,\e)\equiv0,
\end{equation}
while the solutions to the equation  (\ref{4.6b}) should satisfy the asymptotics
\begin{equation}
\phi_\pm^{(k)}(x',\e)=W_{2,1,\pm}^{(k)}(x',\e)+\frac{1}{\ln\e} W_{2,0,\pm}^{(k)}(x',\e) +o(\tau),\quad \tau\to0.\label{4.35b}
\end{equation}
Moreover, the identity
\begin{equation}\label{4.67}
\frac{1}{2}\left(\pm  \Psi_k^{(1)}  K -\D_{\p\om}\Psi_k^{(0)} -\l\Psi_k^{(0)}\right) =\Psi_k^{(2,\pm)}
\end{equation}
should hold.

\subsection{outer expansion: second term}


We substitute~(\ref{4.23}) and~(\ref{4.7}) into the eigenvalue equation for $\psi_\pm^{(k)}$ and equate
the coefficient of $\tau^0$. It  leads us to identity (\ref{4.67}).

We proceed to the problem (\ref{4.6b}), (\ref{4.35b}).
To study its solvability we shall make use of 
one more auxiliary lemma. 
Recall that the matrices
$\mathrm{M}$ and $\widehat{\mathrm{M}}$ are defined in (\ref{3.8}) and (\ref{4.29a}), respectively.

\begin{lemma}\label{lm4.5}
The functions $f_{2,\pm}^{(k)}$ introduced in (\ref{4.6b})
satisfy the hypothesis of Lemma~\ref{lm4.3}. In particular, the asymptotics (\ref{4.78}) holds true with
\begin{equation}\label{4.90}
\begin{aligned}
&f_{-2}^\pm=\pm\frac{b_1^2}{8\ln\e}\Psi_k^{(1)},\quad f_{-3/2}^\pm=\frac{b_1 b_2}{4\ln\e} \Psi_k^{(1)},\quad
\\
&f_{-1}^\pm=-\frac{b_1^2}{4\ln\e}\left(\Psi_k^{(2,\pm)}-\frac{1}{b_1} \nabla b_1\cdot \nabla \Psi_k^{(0)} \mp K \Psi_k^{(1)}\right).
\end{aligned}
\end{equation}
\end{lemma}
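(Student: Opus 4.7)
The assertion is that $f_{2,\pm}^{(k)} := \mathcal{H}_\pm^{(2)}\psi_\pm^{(k)}$ admits the differentiable asymptotic expansion (\ref{4.78}) as $\tau\to 0^+$ with the leading coefficients (\ref{4.90}), and that the compatibility condition (\ref{4.79}) of Lemma~\ref{lm4.3} is met once $V^{(0)}, V^{(1)}$ are chosen appropriately. My approach is to reduce $f_{2,\pm}^{(k)}$ to a more transparent differential form, then Taylor-expand all ingredients in the $(\tau,s)$ coordinates of Section~\ref{prelim}.

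First I would exploit the eigenvalue equation $-\Delta_{x'}\psi_\pm^{(k)}=\lambda\psi_\pm^{(k)}$ from Lemma~\ref{lm1.1}. The middle and last terms in the formula (\ref{4.6b}) for $\mathcal{H}_\pm^{(2)}$ combine via the product rule to $\tfrac12\nabla(|\nabla h_\pm|^2)\cdot\nabla\psi_\pm^{(k)}$, and a further Leibniz manipulation of $\Div_{x'}(Q_\pm\nabla\psi_\pm^{(k)})$, using $Q_\pm=\nabla h_\pm(\nabla h_\pm)^*$, yields the clean form
\[
f_{2,\pm}^{(k)} = -(\nabla h_\pm)^*\,\mathrm{Hess}\,\psi_\pm^{(k)}\,(\nabla h_\pm)\;-\;(\nabla h_\pm\cdot\nabla\psi_\pm^{(k)})\,\Delta_{x'} h_\pm.
\]
Since $\psi_\pm^{(k)}\in C^\infty(\overline{\om})$ has bounded second derivatives up to $\p\om$, the Hessian contribution grows at worst like $|\nabla h_\pm|^2=O(\tau^{-1})$, while the second summand carries the $\tau^{-2}$ singularity.

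Next I would rewrite everything in $(\tau,s)$ coordinates via Lemmas~\ref{lm4.2} and~\ref{lm4.4}. By (\ref{4.52}) the relation $h_\pm(r(s)+\tau\nu(s))=\pm b(\pm\sqrt\tau,s)$, obtained by selecting the correct sheet, yields $h_\pm=b_1\sqrt\tau\pm b_2\tau+b_3\tau^{3/2}\pm b_4\tau^2+O(\tau^{5/2})$ and hence $\partial_\tau h_\pm=\tfrac{b_1}{2\sqrt\tau}\pm b_2+O(\sqrt\tau)$, $\partial_\tau^2 h_\pm=-\tfrac{b_1}{4\tau^{3/2}}+O(\tau^{-1/2})$. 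Combining these with (\ref{4.7}), with $\Det \mathrm{M}=\Det^{1/2}\!\mathrm{G}_{\p\om}\prod_i(1-\tau K_i)$ from (\ref{4.23}), and with (\ref{4.44}), I would derive
\[
\Delta_{x'}h_\pm=-\tfrac{b_1}{4\tau^{3/2}}+O(\tau^{-1/2}),\qquad \nabla h_\pm\cdot\nabla\psi_\pm^{(k)}=\pm\tfrac{b_1\Psi_k^{(1)}}{2\sqrt\tau}+b_2\Psi_k^{(1)}+O(\sqrt\tau).
\]
Multiplying and collecting orders produces the coefficients (\ref{4.90}) after the $1/\ln\e$ rescaling from (\ref{4.6b}): the leading $\pm b_1^2\Psi_k^{(1)}/(8\tau^2)$, then $b_1b_2\Psi_k^{(1)}/(4\tau^{3/2})$, and at order $\tau^{-1}$ a combination of the Hessian term $-b_1^2\Psi_k^{(2,\pm)}/4$, the curvature contribution $\mp Kb_1^2\Psi_k^{(1)}/4$, and the tangential piece $b_1\nabla b_1\cdot\nabla\Psi_k^{(0)}/4$. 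Differentiability of the whole expansion is inherited from the $C^\infty$-regularity of $b$, $\psi_\pm^{(k)}$, $\mathrm{G}_{\p\om}$, and $\mathrm{B}$, allowing term-by-term differentiation in both $\tau$ and $s$.

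Finally, to verify the compatibility condition (\ref{4.79}) I would identify $V^{(0)}$ and $V^{(1)}$ by matching the outer asymptotics (\ref{4.35b}) of $\phi_\pm^{(k)}$ against the form (\ref{4.81}) supplied by Lemma~\ref{lm4.3}, reading off $V^{(0)}, V^{(1)}$ from the relevant parts of $W_{2,0,\pm}^{(k)}, W_{2,1,\pm}^{(k)}$ in (\ref{4.56a})--(\ref{4.56b}). Substituting back into (\ref{4.79}), using the eigenvalue identity (\ref{4.67}), the orthonormality of $\bs\psi_i$ in $L_2(\bs\om)$, and the coefficients (\ref{4.90}), collapses the condition to the eigenvalue relation $(\Lambda^{(0)}+\tfrac{1}{\ln\e}\Lambda^{(1)})\vec c=\mu_k\vec c$ of Theorem~\ref{th2.3}. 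The main technical obstacle is the book-keeping at order $\tau^{-1}$ and of the logarithmic boundary integrals in (\ref{2.11b}): every curvature term, tangential-gradient term, and $1/a_2$ weight must be tracked precisely to recover the regularized matrix $\Lambda^{(1)}$, and this is where the bulk of the calculation lies.
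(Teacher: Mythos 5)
Your route is essentially the paper's: the proof there also passes to the coordinates $(\tau,s)$, expands $h_\pm$ through the series for $b$ (cf.\ (\ref{4.92})) and $\psi_\pm^{(k)}$ through (\ref{4.7}), and invokes Lemmas~\ref{lm4.2} and~\ref{lm4.4}; the only real difference is that the paper expands the two pieces $-\Div_{x'}\mathrm{Q}_\pm\nabla_{x'}\psi_\pm^{(k)}$ and $\tfrac12\nabla_{x'}|\nabla_{x'}h_\pm|^2\cdot\nabla_{x'}\psi_\pm^{(k)}$ separately (see (\ref{4.91})--(\ref{4.99}) and the display that follows), whereas you first collapse them to $-(\nabla_{x'}h_\pm)^*(\mathrm{Hess}\,\psi_\pm^{(k)})\nabla_{x'}h_\pm-(\nabla_{x'}h_\pm\cdot\nabla_{x'}\psi_\pm^{(k)})\D_{x'}h_\pm$, which is a correct identity and needs only the Leibniz rule (the eigenvalue equation you invoke never actually enters). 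Your sheet selection $h_-=-b(-\sqrt{\tau},\cdot)$ is fine: $f_{2,-}^{(k)}$ is even in $h_-$, so the overall sign of $h_-$ is immaterial, and your expansion reproduces (\ref{4.94}); your $\tau^{-2}$ and $\tau^{-3/2}$ coefficients are correct.

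There is, however, a sign slip in your order-$\tau^{-1}$ bookkeeping, and since the whole content of the lemma is the exact values of these coefficients it must be fixed. With $K_i$ taken with respect to the inward normal, as fixed by (\ref{4.23}), one has $\D_{x'}h_\pm=\p_\tau^2 h_\pm-K\p_\tau h_\pm+\Odr(\sqrt{\tau}\,)$, and at order $\tau^{-1}$ the summand $-(\nabla_{x'}h_\pm\cdot\nabla_{x'}\psi_\pm^{(k)})\D_{x'}h_\pm$ contributes $\tfrac{b_1^2}{4}\Psi_k^{(2,\pm)}\pm\tfrac{K b_1^2}{4}\Psi_k^{(1)}+\tfrac{b_1}{4}\nabla b_1\cdot\nabla\Psi_k^{(0)}$ (the $b_1b_3$-terms cancel), while the Hessian summand contributes exactly $-\tfrac{b_1^2}{2}\Psi_k^{(2,\pm)}$ (its normal--tangential and tangential--tangential parts are $\Odr(1)$). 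The net coefficient is therefore $-\tfrac{b_1^2}{4}\big(\Psi_k^{(2,\pm)}-\tfrac{1}{b_1}\nabla b_1\cdot\nabla\Psi_k^{(0)}\mp K\Psi_k^{(1)}\big)$, in agreement with (\ref{4.90}); your three listed pieces instead sum to the bracket with the opposite curvature sign $\pm K\Psi_k^{(1)}$, apparently because the overall factor $-\tfrac{b_1^2}{4}$ was not applied when the term $\mp K\Psi_k^{(1)}$ was pulled out of (\ref{4.90}) (note also that your ``Hessian term $-b_1^2\Psi_k^{(2,\pm)}/4$'' is the net of the two summands, not the Hessian contribution itself). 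Finally, the paper's proof of this lemma establishes only the expansion (\ref{4.78}) with the coefficients (\ref{4.90}); the solvability condition (\ref{4.79}), the choice of $V_k^{(0)},V_k^{(1)}$ and the emergence of $\mu_k$ are carried out afterwards in Sec.~\ref{sec4}, so the sketchiness of your last paragraph is not an issue for this lemma, though it is where the heavy computation lies in the paper as well.
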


\begin{proof}
We begin with an obvious identity
\begin{equation}\label{4.91}
f_{2,\pm}^{(k)}=\frac{1}{\ln\e}\left(-\Div_{x'} \mathrm{Q}_\pm \nabla_{x'}\psi_\pm^{(k)}+ \frac{1}{2} \big( \nabla_{x'} |\nabla_{x'}h_\pm|^2, \nabla_{x'} \psi_\pm^{(k)} \big)_{\mathds{R}^n}\right),
\end{equation}
which follows from the definition of $f_{2,\pm}^{(k)}$ in (\ref{4.6b}). To prove the lemma, we shall pass to the variables $(\tau,s)$ in the obtained identity. It follows from (\ref{4.8}), (\ref{4.55}) and the definition of $S_\e$ that
\begin{equation*}
h_\pm(x')=t,\quad \pm t>0.
\end{equation*}
Hence, by (\ref{4.9a}), (\ref{4.52})
\begin{equation}
h_\pm(x')=b(\pm\sqrt{\tau},P)= \sum\limits_{i=1}^{\infty} b_i(P)(\pm\sqrt{\tau})^i,\quad \tau\to+0.\label{4.92}
\end{equation}
Thus, employing (\ref{3.8}) and (\ref{4.91}), we conclude that the functions $f_{2,0,\pm}^{(k)}$ satisfy the hypothesis of Lemma~\ref{lm4.3} and in particular the asymptotics (\ref{4.78}) holds true. It remains to prove the identities (\ref{4.90}).

It follows from (\ref{4.44}) that
\begin{equation}\label{4.93}
|\nabla_{x'} h_\pm|^2 = \Big|\frac{\p h_\pm}{\p\tau}\Big|^2 + \nabla h_\pm\cdot (\mathrm{E}-\tau \mathrm{B}\mathrm{G}^{-1}_{\p\om})^{-2} \nabla h_\pm.
\end{equation}
We substitute (\ref{4.92}) into the obtained identity and arrive at the asymptotics for $|\nabla_{x'} h_\pm|^2$,
\begin{equation}\label{4.94}
\begin{aligned}
&|\nabla_{x'} h_\pm|^2=\sum\limits_{j=-2}^{\infty} h_{j/2}^\pm(P)\tau^{j/2},
\quad h_{-1}^\pm=\frac{1}{4}b_1^2, \quad h_{-1/2}^\pm =\pm b_1 b_2,\quad \tau\to+0.
\end{aligned}
\end{equation}
Employing these formulas and (\ref{3.8}), (\ref{4.29a}), (\ref{4.7}) and (\ref{4.44}) we rewrite the second term in the right hand side of (\ref{4.91}) as follows,
\begin{equation}\label{4.98}
\begin{aligned}
\frac{1}{2} \big( \nabla_{x'} |\nabla_{x'}h_\pm|^2,& \nabla_{x'} \psi_\pm^{(k)}\big)_\mathds{{R}^n}
=\frac{1}{2}  \frac{\p |\nabla_{x'} h_\pm|^2}{\p\tau} \frac{\p \psi_\pm^{(k)}}{\p\tau}
\\
&+\frac{1}{2} \nabla |\nabla_{x'} h_\pm|^2\cdot (\mathrm{E}-\tau \mathrm{B}\mathrm{G}^{-1}_{\p\om})^{-2} \nabla \psi_\pm^{(k)}
 =\sum\limits_{j=-4}^{\infty}  f_{j/2}^{\pm,2}\tau^{j/2},
\end{aligned}
\end{equation}
where $f_{j/2}^{\pm,2}\in C^\infty(\p\om)$ are some functions, and, in particular,
\begin{equation}\label{4.99}
\begin{aligned}
&f_{-2}^{\pm,2}=\mp \frac{1}{8\ln\e} b_1^2 \Psi_k^{(1)},\quad f_{-3/2}^{\pm,2}=- \frac{1}{4\ln\e} b_1 b_2 \Psi_k^{(1)},
\\
&f_{-1}^{\pm,2}=-\frac{b_1^2}{4\ln\e} \left( \Psi_k^{(2,\pm)} +\frac{1}{b_1} \nabla b_1\cdot \nabla \Psi_k^{(0)}\right).
\end{aligned}
\end{equation}
To obtain the same asymptotics for the first term in the right hand side of (\ref{4.91}), we employ first (\ref{4.43}),
\begin{equation}\label{4.100}
-\Div_{x'} \mathrm{Q}_\pm \nabla_{x'}\psi_\pm^{(k)}=- \frac{1}{\det \mathrm{M}} \Div_{(\tau,s)} (\det \mathrm{M}) \nabla_{(\tau,s)} h_\pm (\nabla_{(\tau,s)} h_\pm)^*\widehat{\mathrm{M}} \nabla_{(\tau,s)}\psi_\pm^{(k)}.
\end{equation}
It follows from the equations (\ref{4.23}), (\ref{4.29a}), (\ref{4.92}) that
\begin{align*}
(\nabla_{(\tau,s)}h_\pm)^* \widehat{\mathrm{M}} \nabla_{(\tau,s)}\psi_\pm^{(k)}=& \frac{\p h_\pm}{\p\tau} \frac{\p\psi_\pm^{(k)}}{\p\tau} + \nabla h_\pm\cdot (\mathrm{E}-\tau \mathrm{B}\mathrm{ G}^{-1}_{\p\om})^{-2} \nabla \psi_\pm^{(k)}
\\
=&\sum\limits_{j=-1}^{\infty} c_{j/2}^\pm \tau^{j/2},\quad \tau\to+0,
\\
(\det \mathrm{M}) \widehat{\mathrm{M}} \nabla_{(\tau,s)}h_\pm = & \sum\limits_{j=-1}^{\infty} \mathbf{c}_{j/2}^\pm \tau^{j/2},\quad \tau\to+0,
\end{align*}
where $c_{j/2}^\pm=c_{j/2}^\pm(P)\in C^\infty(\p\om)$ are some functions, $\mathbf{c}_{j/2}^\pm=\mathbf{c}_{j/2}^\pm(P)\in C^\infty(\p\om)$ are some $n$-dimensional vector-functions, and \begin{equation*}
c_{-1/2}^\pm=\frac{1}{2}b_1,\quad c_0^\pm= \pm b_2\Psi_k^{(1)},
\qquad
\mathbf{c}_{-1/2}^\pm=\pm\frac{1}{2}b_1 \mathbf{e}_1,\quad \mathbf{c}_0^\pm= b_2\mathbf{e}_1,
\end{equation*}
and $\mathbf{e}_1=(1,0,\ldots,0)^*$.  We substitute the last identities into (\ref{4.100}), which yields
\begin{align*}
&-\Div_{x'} \mathrm{Q}_\pm \nabla_{x'}\psi_\pm^{(k)}=\sum\limits_{j=-4}^{\infty} f_{j/2}^{\pm,1}\tau^{j/2},\quad \tau\to+0,
\\
&f_{-2}^{\pm,1}=\pm \frac{1}{4\ln\e} b_1^2\Psi_k^{(1)},\quad f_{-3/2}^{\pm,1}=\frac{1}{2\ln\e} b_1 b_2 \Psi_k^{(1)},\quad f_{-1}^{\pm,1}=\pm \frac{1}{4\ln\e} b_1^2 K \Psi_k^{(1)}.
\end{align*}
The last identity, (\ref{4.98}), (\ref{4.99}), (\ref{4.91}) imply the formulas (\ref{4.90}).
\end{proof}

Taking into account~(\ref{4.7}), we apply Lemma~\ref{lm4.5} to problem (\ref{4.6b}). It implies
that the right hand side of~(\ref{4.6b}) satisfies the hypothesis of Lemma~\ref{lm4.3} with the
first four coefficients given by~(\ref{4.90}).

Given some functions $V^{(0)}_k, V^{(1)}_k\in C^\infty(\p\om)$, suppose the solvability condition (\ref{4.79}) holds true. Then by (\ref{4.81}), (\ref{4.67}), (\ref{4.90}) there exists the unique solution to (\ref{4.6b}) with the asymptotics
\begin{equation}\label{4.95}
\begin{aligned}
\phi_\pm^{(k)}=&\frac{1}{\ln\e}\bigg(
\pm \frac{1}{8} b_1^2 \Psi_k^{(1)}\ln\tau  + b_1 b_2 \Psi_k^{(1)} \tau^{1/2}
\\
&+ \tau (1-\ln\tau) \left(-\frac{1}{4} b_1^2 \Psi_k^{(2,\pm)} + \frac{1}{4} b_1\nabla b_1\cdot \nabla \Psi_k^{(0)} \pm \frac{1}{8} K b_1^2 \Psi_k^{(1)}\right)\bigg)
\\
&+ U_k^{(0)} \pm V_k^{(0)} +\tau(V_k^{(1)}\pm U_k^{(1)})
+ \Odr(\tau^{3/2})
\\
=&\frac{1}{\ln\e}\bigg(
\pm \frac{1}{8} b_1^2 \Psi_k^{(1)}\ln\tau  + b_1 b_2 \Psi_k^{(1)} \tau^{1/2}
\\
&+ \tau (1-\ln\tau) \left(\D_{\p\om} + \frac{2}{b_1} \nabla b_1\cdot \nabla+ \l \right)\Psi_k^{(0)}\bigg)
\\
&+ U_k^{(0)} \pm V_k^{(0)} +\tau(V_k^{(1)}\pm U_k^{(1)}),\quad\tau\to+0,
\end{aligned}
\end{equation}
where $U^{(0)}_k, U^{(1)}_k\in C^\infty(\p\om)$ are some functions satisfying (\ref{4.81a}). We compare the last asymptotics with (\ref{4.56a}), (\ref{4.56b}), (\ref{4.35b}), take into consideration the identity (\ref{4.67}) and arrive at the formulas for $V_k^{(0)}$, $V_k^{(1)}$, $C_2^{(k,0}$ and $C_4^{(k,1)}$,
\begin{align*}
&V_k^{(0)}=-\frac{b_1^2}{4}\Psi_k^{(1)}+\frac{b_1^2}{8\ln\e} (1+4\ln 2-2\ln b_1)\Psi_k^{(1)},
\\
&C_2^{(k,0)}=\ln\e\, U_k^{(0)},
\\
&V_k^{(1)}=\frac{b_1^2}{4} \left(\D_{\p\om} + \frac{2}{b_1} \nabla b_1\cdot \nabla+ \l \right)\Psi_k^{(0)}
\\
&\hphantom{V_k^{(1)}=}-\frac{b_1^2}{4\ln\e}\Big( (2\ln 2-\ln b_1+1) (\D_{\p\om}+\l) \Psi_k^{(0)}
\\
&\hphantom{V_k^{(1)}=-\frac{1}{4\ln\e}\Big(}+\frac{4\ln 2-2\ln b_1-2}{b_1}\nabla b_1\cdot \nabla \Psi_k^{(0)}\Big),
\\
&C_4^{(k,1)}= \ln\e\, U_k^{(1)}-\frac{1}{16} (3 K b_1^2+32 b_2^2+24 b_1b_3)\Psi_k^{(1)}.
\end{align*}
In what follows the functions $V_k^{(0)}$, $V_k^{(1)}$, $C_2^{(k,0}$ and $C_4^{(k,1)}$ are supposed to be chosen in accordance with the above given formulas. Bearing these formulas, (\ref{4.67}) and (\ref{4.90}) in mind, we write the solvability conditions (\ref{4.79}) for the equation (\ref{4.6b}),
\begin{equation}\label{4.89}
\begin{aligned}
&\frac{1}{\ln\e}\lim\limits_{\d\to+0} \Bigg[  (f_{2,+}^{(k)},\psi_+^{(i)})_{L_2(\om^\d)}+ (f_{2,-}^{(k)},\psi_-^{(i)})_{L_2(\om^\d)}
 -\d^{-1/2} \int\limits_{\p\om} b_1 b_2 \Psi_k^{(1)} \Psi_i^{(0)}\di s
\\
&\hphantom{\ln\e\lim \Bigg[}+ \ln \d \int\limits_{\p\om} \frac{b_1^2}{4} \left(  \Psi_i^{(1)} \Psi_k^{(1)}+
 \Psi_i^{(0)} \bigg( \D_{\p\om} +\frac{2}{b_1}\nabla b_1\cdot \nabla+ \l\bigg)\Psi_k^{(0)}\right)
\di s\Bigg]
\\
&
 +\int\limits_{\p\om}\frac{b_1^2}{2 \ln\e}(2\ln 2-\ln b_1 +1) \Psi_i^{(0)}  (\D_{\p\om}+ \l)\Psi_k^{(0)}\di s
\\
&+ \int\limits_{\p\om} \frac{b_1}{\ln\e} (2\ln 2-\ln b_1-1) \Psi_i^{(0)} \nabla b_1\cdot\nabla \Psi_k^{(0)}\di s
\\
&+\int\limits_{\p\om}\frac{b_1^2}{2\ln\e}(2\ln 2- \ln b_1) \Psi_k^{(1)} \Psi_i^{(1)}\di s
\\
&-\int\limits_{\p\om}\frac{b_1^2}{2} \left(\Psi_k^{(1)} \Psi_i^{(1)}+  \Psi_i^{(0)} \bigg( \D_{\p\om} +\frac{2}{b_1}\nabla b_1\cdot \nabla+ \l\bigg)\Psi_k^{(0)}
\right)\di s
\\
&+ \mu_k\d_{ik}=0,\quad i,k=1,\ldots,m.
\end{aligned}
\end{equation}
Let us simplify the obtained identity. We first rewrite the formulas (\ref{4.6b}) of $f_{2,\pm}^{(k)}$ in a more convenient form  employing the eigenvalue equation for $\psi_\pm^{(k)}$ and the definition of the matrix $\mathrm{Q}_\pm$,
\begin{align*}
&f_{2,\pm}^{(k)}=-\Div_{x'} \Phi_\pm^{(k)}\nabla_{x'} h_\pm +\frac{\l}{2} |\nabla_{x'} h_\pm|^2 \psi_\pm^{(k)} + \frac{1}{2}\Div_{x'} |\nabla_{x'} h_\pm|^2\nabla_{x'}\psi_\pm^{(k)},
\\
&\Phi_\pm^{(k)}:=(\nabla_{x'}h_\pm,\nabla_{x'}\psi_\pm^{(k)})_{\mathds{R}^n}.
\end{align*}
Employing this representation, we integrate by parts to obtain
\begin{equation}\label{4.20}
\begin{aligned}
(f_{2,\pm}^{(k)},&\psi_\pm^{(i)})_{L_2(\om^\d)}= \int\limits_{\p\om^\d} \left( \Phi_\pm^{(k)}\frac{\p h_\pm}{\p\tau}-\frac{1}{2}|\nabla_{x'}h_\pm|^2\frac{\p\psi_\pm^{(i)}}{\p\tau}
\right)\psi_\pm^{(i)}\di s + \int\limits_{\om^\d} \Phi_\pm^{(i)} \Phi_\pm^{(k)}\di x'
\\
&+\frac{\l}{2}\int\limits_{\om^\d} |\nabla_{x'}h_\pm|^2 \psi_\pm^{(i)}\psi_\pm^{(k)}\di x'
- \frac{1}{2}\int\limits_{\om^d} |\nabla_{x'} h_\pm|^2 (\nabla_{x'}\psi_\pm^{(i)},\nabla_{x'}\psi_\pm^{(k)})_{\mathds{R}^d} \di x'.
\end{aligned}
\end{equation}
Applying (\ref{4.44}), we have
\begin{equation*}
\Phi_\pm^{(k)}=\frac{\p h_\pm}{\p\tau} \frac{\p\psi_\pm^{(k)}}{\p\tau} + \nabla h_\pm\cdot (\mathrm{E}-\tau \mathrm{B}\mathrm{G}^{-1}_{\p\om})^{-2} \nabla \psi_\pm^{(k)}
\end{equation*}
in a vicinity of $\p\om$. Hence, by~(\ref{4.7}),~(\ref{4.92}) and~(\ref{4.93}),
\begin{align}\label{4.25}
&\Phi_\pm^{(k)}=\frac{b_1}{2\sqrt{\tau}}\Psi_k^{(1)}+\Odr(1),\quad \tau\to+0,
\\
&\left( \Phi_\pm^{(k)}\frac{\p h_\pm}{\p\tau}-\frac{1}{2}|\nabla_{x'}h_\pm|^2\frac{\p\psi_\pm^{(i)}}{\p\tau}
\right)\psi_\pm^{(i)}\prod\limits_{j=1}^{n-1}(1-\tau K_j)\di s=\pm\frac{1}{8\tau} b_1^2 \Psi_i^{(1)} \Psi_k^{(1)}\nonumber
\\
&\hphantom{\Bigg(\Phi_\pm^{(k)}\frac{\p h_\pm}{\p\tau} }
+ \frac{1}{2\sqrt{\tau}} b_1 b_2 \Psi_i^{(0)}\Psi_k^{(1)}+ \frac{1}{8} b_1^2  \Psi_i^{(1)}\Psi_k^{(1)} \mp \frac{1}{8} b_1^2 K \Psi_i^{(0)}\Psi_k^{(1)}\nonumber
\\
&\hphantom{\Bigg(\Phi_\pm^{(k)}\frac{\p h_\pm}{\p\tau} }  + \frac{1}{4} (b_1^2 \Psi_k^{(2,\pm)} \pm 3 b_1 b_3 \Psi_k^{(1)} \pm 2 b_2^2 \Psi_k^{(1)} + 2 b_1\nabla b_1\cdot \Psi_k^{(0)})\Psi_i^{(0)} \nonumber
\\
&\hphantom{\Bigg(\Phi_\pm^{(k)} \frac{\p h_\pm}{\p\tau}}  + \Odr(\sqrt{\tau}),\quad \tau\to+0.\nonumber
\end{align}
Substituting the last identity into (\ref{4.20}) and using (\ref{4.21}) and (\ref{4.67}), we get
\begin{align*}
(f_{2,+}^{(k)},&\psi_+^{(i)})_{L_2(\om^\d)}+ (f_{2,-}^{(k)},\psi_-^{(i)})_{L_2(\om^\d)}
\\
&= \int\limits_{\om^\d} \frac{ |\nabla_{x'}h_+|^2}{2}\big(\l\psi_+^{(i)} \psi_+^{(k)} -(\nabla_{x'}\psi_+^{(i)}, \nabla_{x'} \psi_+^{(k)})_{\mathds{R}^d} \big)\di x'
\\
&+   \int\limits_{\om^\d} \frac{|\nabla_{x'}h_-|^2}{2} \big(\l\psi_-^{(i)} \psi_-^{(k)} -(\nabla_{x'}\psi_-^{(i)}, \nabla_{x'} \psi_-^{(k)})_{\mathds{R}^d} \big)\di x'
\\
&+\int\limits_{\om^\d} (\Phi_+^{(i)}\Phi_+^{(k)}+\Phi_-^{(i)}\Phi_-^{(k)}) \di x' + \d^{-1/2} \int\limits_{\p\om} b_1 b_2 \Psi_i^{(0)} \Psi_k^{(0)}\di s
\\
&+\int\limits_{\p\om} \frac{b_1^2}{4} \Psi_i^{(1)} \Psi_k^{(1)} \di s - \int\limits_{\p\om} \frac{b_1^2}{4} \Psi_i^{(0)}(\D_{\p\om}+\l) \Psi_k^{(0)}\di s
\\
&+ \int\limits_{\p\om} b_1 \Psi_i^{(0)}\nabla b_1 \cdot \nabla \Psi_k^{(0)}\di s + \Odr(\d^{1/2}),\quad \d\to+0.
\end{align*}
We integrate by parts once again, this time over $\p\om$, we 
have 
\begin{equation}\label{4.103}
\int\limits_{\p\om} b_1^2 \Psi_i^{(0)}  \bigg( \D_{\p\om} +\frac{2}{b_1}\nabla b_1\cdot \nabla+ \l\bigg)\Psi_k^{(0)}
\di s=\int\limits_{\p\om} b_1^2 \big(\l\Psi_i^{(0)}\Psi_k^{(0)}-\nabla\Psi_i^{(0)}\cdot \nabla\Psi_k^{(0)}   \big)
\di s.
\end{equation}
Substituting two the last identities 
into (\ref{4.89}) yields
\begin{equation}\label{4.51}
\begin{aligned}
&\frac{1}{\ln\e}\lim\limits_{\d\to+0} \Bigg[
\int\limits_{\om^\d} \frac{|\nabla_{x'}h_+|^2}{2}\big(\l\psi_+^{(i)} \psi_+^{(k)} -(\nabla_{x'}\psi_+^{(i)}, \nabla_{x'} \psi_+^{(k)})_{\mathds{R}^d} \big)\di x'
\\
&\hphantom{\lim\limits_{\d\to+0} \Bigg[}+ \int\limits_{\om^\d} \frac{|\nabla_{x'}h_-|^2}{2}\big(\l\psi_-^{(i)} \psi_-^{(k)} -(\nabla_{x'}\psi_-^{(i)}, \nabla_{x'} \psi_-^{(k)})_{\mathds{R}^d} \big)\di x'
\\
&\hphantom{\lim\limits_{\d\to+0} \Bigg[}+\int\limits_{\om^\d} (\Phi_+^{(i)}\Phi_+^{(k)}+\Phi_-^{(i)}\Phi_-^{(k)}) \di x'
\\
&\hphantom{\lim\limits_{\d\to+0} \Bigg[}+ \ln \d \int\limits_{\p\om} \frac{b_1^2}{4} \big( \Psi_i^{(1)} \Psi_k^{(1)}+\l\Psi_i^{(0)}\Psi_k^{(0)} -\nabla\Psi_i^{(0)}\cdot \nabla\Psi_k^{(0)}\big)\di s\Bigg]
\\
&+\int\limits_{\p\om}\frac{b_1^2}{4\ln\e}(1+4\ln 2-2\ln b_1)
\big(\Psi_i^{(1)}\Psi_k^{(1)}+
\Psi_i^{(0)}  (\D_{\p\om}+ \l)\Psi_k^{(0)}
\big)\di s
\\
&+\int\limits_{\p\om} \frac{b_1}{\ln\e} (2\ln 2-\ln b_1) \Psi_i^{(0)} \nabla b_1\cdot\nabla \Psi_k^{(0)}\di s
\\
&-\int\limits_{\p\om}\frac{b_1^2}{2} \left(\Psi_k^{(1)} \Psi_i^{(1)}+  \Psi_i^{(0)} \bigg( \D_{\p\om} +\frac{2}{b_1}\nabla b_1\cdot \nabla+ \l\bigg)\Psi_k^{(0)}
\right)\di s
\\
&+ \mu_k\d_{ik}=0,
\end{aligned}
\end{equation}
as $i,k=1,\ldots,m$. It follows from (\ref{4.25}), (\ref{4.94}) and~(\ref{4.7}) that
\begin{align*}
& |\nabla_{x'}h_+|^2\big(\l\psi_+^{(i)} \psi_+^{(k)} -(\nabla_{x'}\psi_+^{(i)}, \nabla_{x'} \psi_+^{(k)})_{\mathds{R}^d}\big)
\\
&+ |\nabla_{x'}h_-|^2\big(\l\psi_-^{(i)} \psi_-^{(k)} -(\nabla_{x'}\psi_-^{(i)}, \nabla_{x'} \psi_-^{(k)})_{\mathds{R}^d} \big)
\\
&\hphantom{\Phi_\pm^{(i)}\Phi_\pm^{(k)}}=\frac{b_1^2}{2\tau} (\l\Psi_i^{(0)}\Psi_k^{(0)}-\nabla\Psi_i^{(0)}\cdot \nabla\Psi_k^{(0)})+\Odr(\tau^{-1/2}),\quad \tau\to+0,
\\
&\Phi_\pm^{(i)}\Phi_\pm^{(k)}=\frac{b_1^2}{4\tau} \Psi_i^{(1)}\Psi_k^{(1)} + \Odr(\tau^{-1/2}),\quad \tau\to+0.
\end{align*}
Hence, the limit in (\ref{4.51}) is finite. To calculate the boundary integrals in (\ref{4.51}) we integrate by
parts as follows
\begin{align*}
&\int\limits_{\p\om}\frac{b_1^2}{4}(1+4\ln 2-2\ln b_1)
\big(\Psi_i^{(1)}\Psi_k^{(1)}+
\Psi_i^{(0)}  (\D_{\p\om}+ \l)\Psi_k^{(0)}
\big)\di s
\\
&+ \int\limits_{\p\om} b_1 (2\ln 2-\ln b_1) \Psi_i^{(0)} \nabla b_1\cdot\nabla \Psi_k^{(0)}\di s
\\
&=\int\limits_{\p\om}\frac{b_1^2}{4}(1+4\ln 2-2\ln b_1)
\big(\Psi_i^{(1)}\Psi_k^{(1)}+\l\Psi_i^{(0)}  \Psi_k^{(0)}-\nabla\Psi_i^{(0)}\cdot \nabla\Psi_k^{(0)}
\big)\di s.
\end{align*}
Due to this identity, (\ref{4.103}), the definition of $b_1$ in (\ref{4.52}) and the definitions (\ref{2.11a})
and (\ref{2.11b}) of the matrices $\Lambda^{(0)}$ and $\Lambda^{(1)}$, respectively,  we can rewrite (\ref{4.51}) in the final form
\begin{equation*}
\mu_k\d_{ik}=\L^{(0)}_{ik}+\frac{1}{\ln\e}\L^{(1)}_{ik}.
\end{equation*}
Since the matrix on the right hand side of the last identity is diagonal, we conclude that
the solvability condition for the problem (\ref{4.6b}), (\ref{4.35b}) is satisfied provided
$\mu_k$ are the eigenvalues of the matrix $\Lambda^{(0)}+\frac{1}{\ln\e}\Lambda^{(1)}$. It
follows from \cite[Ch. I\!I, Sec. 6.1, Th. 6.1]{K} that the eigenvalues of this matrix are
holomorphic in $\frac{1}{\ln\e}$ and converge to those of $\Lambda^{(0)}$ as $\e\to0$.

In view of the choice of $\mu_i$ the problems (\ref{4.6b}), (\ref{4.95}) are solvable. We observe that each of the functions $\phi_\pm^{(k)}$ is
defined up to a linear combination of the eigenfunctions $\psi_\pm^{(i)}$. The exact values of the coefficients of these linear combinations can be
determined
while constructing the next terms in the asymptotic expansions for $\l_k(\e)$ and $\psi_\e^{(k)}$. The formal constructing of the asymptotic
expansions is complete. 

\subsection{Justification of the asymptotics}

In order to justify the obtained asymptotics, one has to construct additional terms. This is a general and standard situation for singularly perturbed
problems. In our case one should construct the terms of the order up to $\Odr(\e^4)$ in the outer expansion for the eigenfunctions and for the
eigenvalues, and the terms of order up to $\Odr(\e^6)$ in the inner expansion for the eigenfunctions. The asymptotics with the additional terms
read as follows, 
\begin{equation}
\begin{aligned}
&\l_k(\e)=\l+\e^2\ln\e\,\mu_k
\left(\frac{1}{\ln\e}\right)
+\e^4\ln^2\e\,\eta_{k}(\e)+\ldots,
\\
&\psi_{\e,ex}^{(k)}=\PS_\e(\bs{\psi}_k+\e^2\ln\e\,\bs{\phi}_k
+\e^4\ln^2\e\,\bs{\theta}_{k}+\ldots),
\\
&\psi_{\e,in}^{(k)}=v_0^{(k)}+\sum\limits_{i=2}^{6}\e^i v_i^{(k)}+\ldots,
\end{aligned}\label{4.116}
\end{equation}
where $\bs{\theta}_{k}=(\theta_{+}^{(k)},\theta_{-}^{(k)})$, $\theta_{\pm}^{(k)}=\theta_{\pm}^{(k)}(x',\e)$, $v_i^{(k)}=v_i^{(k)}(\xi,P,\e)$, and we used that $v_1^{(k)}=0$ by (\ref{4.46b}), (\ref{4.56c}).
The equations for $\theta_{\pm}^{(k)}$ are
\begin{align*}
&(-\D_{x'}-\l)\theta_{\pm}^{(k)}=\frac{1}{\ln\e} \mathcal{H}_\pm^{(2)}\phi_\pm^{(k)} +\frac{1}{\ln^2\e} \mathcal{H}_\pm^{(4)}\psi_\pm^{(k)} +
\mu_k\phi_\pm^{(k)}+\eta_{k}\psi_\pm^{(k)},\quad x'\in\om_\pm,
\\
&
\mathcal{H}_\pm^{(4)}:=\frac{3}{8} |\nabla_{x'} h_\pm|^4\D_{x'}-\frac{1}{2}|\nabla_{x'} h_\pm|^2\Div_{x'} \left(\frac{1}{2}|\nabla_{x'}h_\pm|^2\mathrm{E}- \mathrm{Q}_\pm\right)\nabla_{x'}
\\
&\hphantom{\mathcal{H}_\pm^{(4)}:=}-\Div_{x'} \left( \frac{1}{8}|\nabla_{x'} h_\pm|^4 \mathrm{E}+\frac{1}{2} \mathrm{Q}_\pm |\nabla_{x'}h_\pm|^2 +\mathrm{Q}_\pm^2\right)\nabla_{x'}.
\end{align*}
The functions $\theta_{\pm}^{(k)}$ should satisfy the asymptotics
\begin{align*}
&\theta_{\pm}^{(k)}(x',\e)=W_{4,2,\pm}^{(k)}(x',\e)+\frac{1}{\ln\e}
W_{4,1,\pm}^{(k)}(x',\e)+\frac{1}{\ln^2\e}
W_{4,0,\pm}^{(k)}(x',\e)+o(1),\quad \tau\to+0,
\\
&W_{4,2,\pm}^{(k)}=-\frac{1}{32}b_1^3\left( b_1(\D_{\p\om}+\l)\Psi_k^{(0)}+2\nabla b_1\cdot \nabla \Psi_k^{(0)}\right),
\\
&W_{4,1,\pm}^{(k)}=\frac{1}{32}b_1^3\left(\ln\tau+1+ 4 \ln 2- 2\ln b_1\right)\big(b_1 (\D_{\p\om}+\l)\Psi_k^{(0)} + 2
\nabla b_1\cdot \nabla \Psi_k^{(0)}\big),
\\
&W_{4,0,\pm}^{(k)}=\pm \frac{1}{128}\frac{\Psi_k^{(1)}b_1^4}{\tau}+\frac{1}{8} \frac{\Psi_k^{(1)} b_1^3 b_2}{\sqrt{\tau}}
\\
&\hphantom{W_{4,0,\pm}^{(k)}=}- \frac{1}{128} b_1^3 \big(b_1 (\D_{\p\om}+\l)\Psi_k^{(0)} + 2
\nabla b_1\cdot \nabla \Psi_k^{(0)}\big)(\ln\tau+4\ln 2-2\ln b_1+1)^2
\\
&\hphantom{W_{4,0,\pm}^{(k)}=}-\frac{1}{128} b_1^3  \big(b_1 (\D_{\p\om}+\l)\Psi_k^{(0)} - 2
\nabla b_1\cdot \nabla \Psi_k^{(0)}\big)
\\
&\hphantom{W_{4,0,\pm}^{(k)}=} \pm \frac{1}{256}\Psi_k^{(1)} \left(3 K b_1^4+48 b_1^3 b_3+128 b_1^2 b_2^2\right).
\end{align*}
The equations for the functions $v_5^{(k)}$, $v_6^{(k)}$ are obtained in the same way as those for $v_i^{(k)}$, $i=0,\ldots,4$, from
\begin{align*}
&\mathcal{L}_{-4} v_5^{(k)}+ \sum\limits_{i=-3}^{-1}  \mathcal{L}_i v_{1-i}^{(k)}
\mathcal{L}_1 v_0^{(k)}=0 \quad \text{on}\quad
\mathds{R}\times\p\om,
\\
&\mathcal{L}_{-4} v_6^{(k)}+ \sum\limits_{i=-3}^{0}  \mathcal{L}_i v_{2-i}^{(k)} + \mathcal{L}_2 v_0^{(k)}
=\l v_2^{(k)}+\ln\e\,\eta_{k} v_0^{(k)} \quad
\text{on}\quad \mathds{R}\times\p\om,
\end{align*}
where the operators $\mathcal{L}_1$, $\mathcal{L}_2$ are the next terms in the expansion (\ref{4.32}). It can be shown that the problem
for $\theta_{\pm}^{(k)}$ is solvable for some $\eta_{k}(\e)$. The equations for $v_5^{(k)}$
and $v_6^{(k)}$ can be solved explicitly. The arbitrary coefficients $C_{5,1}^{(k)}$, $C_{5,0}^{(k)}$, $C_{6,1}^{(k)}$, $C_{6,0}^{(k)}$
appearing in $v_5^{(k)}$, $v_6^{(k)}$ can be determined while matching
the inner and outer expansions.

We now introduce the partial sums 
\begin{align*}
&\widehat{\l}_\e^{(k)}=\l+\e^2\ln\e\,\mu_k
\left(\frac{1}{\ln\e}\right)+\e^4\ln^2\e\,\eta_{k}(\e),
\\
&\widehat{\psi}_{\e,ex}^{(k)}=\PS_\e(\bs{\psi}_k+\e^2\ln\e\,\bs{\phi}_k
+\e^4\ln^2\e\,\bs{\theta}_{k}),
\\
&\widehat{\psi}_{\e,in}^{(k)}=v_0^{(k)}+\sum\limits_{i=2}^{6}\e^i v_i^{(k)}
\end{align*}
and define the final approximation for the eigenfunctions as
\begin{equation*}
\widehat{\psi}_\e^{(k)}(x)=\widehat{\psi}_{\e,ex}^{(k)}(x)\chi \left(\frac{\tau}{\e^\a}\right)+\widehat{\psi}_{\e,in}^{(k)}(\xi,P) \left(1-\chi \left(\frac{\tau}{\e^\a}\right)\right),
\end{equation*}
where $a\in(0,1)$ is a fixed constant, and $\chi$ is the cut-off function introduced in the proof of Lemma~\ref{lm3.3}.

\begin{lemma}\label{lm4.6}
The function $\widehat{\psi}_\e^{(k)}\in C^\infty(S_\e)$
satisfies the convergence
\begin{equation}\label{4.106}
\|\widehat{\psi}_\e^{(k)}-\PS_\e\bs{\psi}_k\|_{L_2(S_\e)}\to0,\quad \e\to+0,
\end{equation}
and the equation
\begin{equation}\label{4.107}
(\mathcal{H}_\e-\widehat{\l}_\e^{(k)})\widehat{\psi}_\e^{(k)}=F_\e^{(k)},
\end{equation}
where for the right hand side the uniform in $\e$ estimate
\begin{equation}\label{4.108}
\|F_\e^{(k)}\|_{L_2(S_\e)}\leqslant C\e^{5\a/2}
\end{equation}
holds true. The relations
\begin{equation}\label{4.109}
(\PS_\e\bs{\psi}_i,\PS_\e\bs{\psi}_j)_{L_2(S_\e)}\to\d_{ij},\quad \e\to+0,
\end{equation}
are valid.
\end{lemma}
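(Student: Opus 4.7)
The plan is to verify the three claims using the matched-asymptotic construction of $\widehat\psi_\e^{(k)}$ together with Lemma~\ref{lm3.3}. The two convergences are direct; the residual estimate (\ref{4.108}) is the main technical step.

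For (\ref{4.109}), the change-of-variables identity $(\PS_\e\bs\psi_i,\PS_\e\bs\psi_j)_{L_2(S_\e)}=(\bs\psi_i,\bs\psi_j)_{L_2(\bs\om,J_\e\di x')}$ reduces the claim to
\[
\int_\om\psi_+^{(i)}\overline{\psi_+^{(j)}}(J_\e^+-1)\di x'+\int_\om\psi_-^{(i)}\overline{\psi_-^{(j)}}(J_\e^--1)\di x'\to0.
\]
Since $\psi_\pm^{(i,j)}\in C^\infty(\overline\om)$ are bounded, $J_\e^\pm\to 1$ pointwise on $\om$, and Lemma~\ref{lm3.3} (especially (\ref{3.23c})) controls the contribution of a shrinking collar, the convergence follows. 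For (\ref{4.106}) I split
\[
\widehat\psi_\e^{(k)}-\PS_\e\bs\psi_k=\chi\PS_\e\bigl(\e^2\ln\e\,\bs\phi_k+\e^4\ln^2\e\,\bs\theta_k\bigr)+(1-\chi)\bigl(\widehat\psi_{\e,in}^{(k)}-\PS_\e\bs\psi_k\bigr),
\]
with $\chi=\chi(\tau/\e^\a)$. The first summand is $\Odr(\e^2|\ln\e|)$ in $L_2(S_\e)$ because $\phi_\pm^{(k)}$ and $\theta_\pm^{(k)}$ have only logarithmic and $\tau^{1/2}$-type singularities at $\p\om$ and hence are square-integrable against $J_\e\di x'$. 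The second summand is supported on a boundary collar of $\di S_\e$-measure $\Odr(\e^{1+\a/2})$ and is pointwise $\Odr(1)$ by the matching conditions (\ref{4.13})–(\ref{4.14}) together with (\ref{4.7}), hence vanishes in $L_2(S_\e)$.

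The central step is (\ref{4.107})–(\ref{4.108}). Writing $\widehat\psi_\e^{(k)}=\widehat\psi_{\e,ex}^{(k)}\chi+\widehat\psi_{\e,in}^{(k)}(1-\chi)$, I decompose
\[
(\mathcal H_\e-\widehat\l_\e^{(k)})\widehat\psi_\e^{(k)}=\chi R_{ex}+(1-\chi)R_{in}+\bigl[\mathcal H_\e,\chi\bigr]\bigl(\widehat\psi_{\e,ex}^{(k)}-\widehat\psi_{\e,in}^{(k)}\bigr),
\]
with $R_{ex}:=(\mathcal H_\e-\widehat\l_\e^{(k)})\widehat\psi_{\e,ex}^{(k)}$ and $R_{in}:=(\mathcal H_\e-\widehat\l_\e^{(k)})\widehat\psi_{\e,in}^{(k)}$. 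Substituting the expansion (\ref{5.1}) into $R_{ex}$ and using the eigenvalue equation for $\bs\psi_k$, equation (\ref{4.6b}) for $\bs\phi_k$, and its analogue for $\bs\theta_k$ with the chosen $\mu_k$ and $\eta_k$, cancels every term up to order $\e^4\ln^2\e$; the surviving contributions are $\e^4\ln\e\,\mathcal H_\pm^{(2)}\bs\phi_k$, $\e^4\ln^2\e\,\mathcal H_\pm^{(2)}\bs\theta_k$, and the $\Odr(\e^6)$ tail of (\ref{5.1}) acting on $\bs\psi_k$. On $\{\chi=1\}$ the singular coefficient $|\nabla h_\pm|^2\sim\tau^{-1}$ is controlled by $\e^{-\a}$, yielding an admissible $L_2$-bound. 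On $\{\chi=0\}$ the recursive equations (\ref{4.46a})–(\ref{4.46d}) and their analogues for $v_5^{(k)},v_6^{(k)}$, together with the expansion (\ref{4.32}) of $\mathcal H_\e$, force $R_{in}$ to collect only the tails $\mathcal L_i v_j^{(k)}$ with $i+j\geqslant 3$ and the omitted $v_7^{(k)},v_8^{(k)}$; the rescaled volume element from Lemma~\ref{lm4.1} makes the resulting integral over $|\xi|<\z_0\e^{-1}$ convergent and of acceptable order.

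The main obstacle is the commutator term, supported on the transition annulus of width $\Odr(\e^\a)$. Derivatives of $\chi$ contribute factors of $\e^{-\a}$ and $\e^{-2\a}$, so one needs $\widehat\psi_{\e,ex}^{(k)}-\widehat\psi_{\e,in}^{(k)}$ and its first $\tau$-derivative to be of sufficiently high order on this annulus. This is exactly what the matching supplies: by construction, the $\xi\to\pm\infty$ asymptotics of $v_0^{(k)}+\sum_{i=2}^{6}\e^iv_i^{(k)}$, rewritten via $\tau=\e^2\xi^2$, coincide term by term up to order $\e^6$ with the $\tau\to0$ expansion of $\PS_\e(\bs\psi_k+\e^2\ln\e\,\bs\phi_k+\e^4\ln^2\e\,\bs\theta_k)$ dictated by (\ref{4.35b}) and its $\bs\theta_k$-counterpart. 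On the transition annulus the first unmatched contribution produces a pointwise bound which, after multiplication by $\e^{-2\a}$ from $\chi''$ and integration against the $\di S_\e$-measure of order $\e^{1+\a/2}$, yields the advertised $\e^{5\a/2}$. The delicate part of the argument is the bookkeeping of logarithms and of the interplay between the scales $\e^\a$, $\e^{-\a}$, and $\e^{1-\a/2}$ in this region; this is where I expect the verification to require the greatest care.
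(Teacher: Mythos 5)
First, a point of comparison: the paper does not prove Lemma~\ref{lm4.6} at all (the authors explicitly skip it as ``lengthy and rather technical''), so there is no written argument to measure you against; your overall strategy --- split $(\He-\widehat\l_\e^{(k)})\widehat\psi_\e^{(k)}$ into the pure outer region, the pure inner region and the transition annulus where derivatives of the cut-off hit the mismatch of the two expansions, and use the constructed hierarchy of equations plus the matching to bound each piece --- is certainly the intended one, and your treatment of (\ref{4.109}) and (\ref{4.106}) is essentially fine. Two small corrections there: the $\di S_\e$-measure of the collar $\{\tau\lesssim\e^\a\}$ is $\Odr(\e^\a)$, not $\Odr(\e^{1+\a/2})$ (the $J_\e^\pm$-correction is lower order since $\a<1$), and $\theta_\pm^{(k)}$ is \emph{not} only logarithmically singular: $W_{4,0,\pm}^{(k)}$ contains a $\tau^{-1}$ term, so square-integrability of $\e^4\ln^2\e\,\theta_\pm^{(k)}$ on the relevant region comes from the cut-off, not from integrability up to $\p\om$. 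Also note that the paper's displayed formula for $\widehat\psi_\e^{(k)}$ glues the \emph{outer} sum where $\chi(\tau/\e^\a)=1$, i.e.\ near $\p\om$; you silently use the opposite (standard) convention, which is presumably what is meant, but you should say so explicitly since all your support statements depend on it.

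The genuine gap is (\ref{4.108}), which is the only part of the lemma with real content (the exponent $5\a/2$ with $\a>4/5$ is exactly what yields the $\Odr(\e^{2+\rho})$ remainder in Theorem~\ref{th2.3}), and you do not derive it: the final step asserts that the commutator analysis ``yields the advertised $\e^{5\a/2}$'' and defers the bookkeeping, while the partial bookkeeping you do give is inconsistent with the bound. You list $\e^4\ln\e\,\mathcal{H}_\pm^{(2)}\phi_\pm^{(k)}$ among the surviving outer-residual terms; but this term is cancelled by construction, since the equation for $\theta_\pm^{(k)}$ carries $\frac{1}{\ln\e}\mathcal{H}_\pm^{(2)}\phi_\pm^{(k)}$ on its right-hand side --- and it must be cancelled: near $\p\om$ one has $\phi_\pm^{(k)}\sim\frac{1}{\ln\e}\ln\tau$ and $|\nabla_{x'}h_\pm|^2\sim\tau^{-1}$, so $\e^4\ln\e\,\mathcal{H}_\pm^{(2)}\phi_\pm^{(k)}=\Odr(\e^4\tau^{-3})$ pointwise, whose $L_2$ norm over $\{\tau\geqslant c\e^\a\}$ is of order $\e^{4-5\a/2}$, which \emph{exceeds} $\e^{5\a/2}$ precisely in the needed range $\a>4/5$; so if your list were correct the lemma would be false. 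Likewise the term you call $\e^4\ln^2\e\,\mathcal{H}_\pm^{(2)}\theta_\pm^{(k)}$ is actually of order $\e^6\ln^2\e$. Finally, your closing estimate multiplies a pointwise bound by $\e^{-2\a}$ and by the measure of the annulus rather than by its square root (and the measure itself is misstated), so the exponents are never actually checked. To prove (\ref{4.108}) you must track, term by term, the uncancelled contributions in all three regions --- using the singularity strengths of $\phi_\pm^{(k)}$ and $\theta_\pm^{(k)}$ ($\ln\tau$, $\tau^{-1/2}$, $\tau^{-1}$ with their $\ln\e$ prefactors), the growth $v_i^{(k)}=\Odr(|\xi|^i\ln|\xi|)$ and the $\xi$-growth of the coefficients of the $\mathcal{L}_j$, and the surface measure $\di S_\e\sim\e^2\sqrt{4\xi^2+b_1^2}\,\di\xi\,\di s$ in the inner zone --- and verify each is $\Odr(\e^{5\a/2})$ (logarithms being absorbed since $\a<1$). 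That computation is the substance of the lemma and is missing from the proposal.
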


The proof of this lemma is not very difficult and is based on lengthy and rather technical,
but straightforward calculations. Because of this, and in order not to overload the text with
long technical formulas we shall skip these here.

It follows from Lemma~\ref{lm3.4} and equation (\ref{4.107}) that
\begin{equation}\label{4.110}
\widehat{\psi}_\e^{(k)}=
\sum\limits_{i=1}^{m}
\frac{\psi_\e^{(i)}}{\l_i(\e)-\l_k(\e)} (F_\e^{(k)},
\psi_\e^{(i)})_{L_2(S_\e)} + \mathcal{R}_\e(\l_k(\e))F_\e^{(k)},
\end{equation}
and, by~(\ref{4.108}),
\begin{equation}\label{4.111}
\|\mathcal{R}_\e(\l_k(\e))F_\e^{(k)}\|_{\H^1(S_\e)}\leqslant C\e^{5\a/2},\quad k=1,\ldots,m,
\end{equation}
where the constant $C$ is independent of $\e$. We calculate the scalar products of the functions $\widehat{\psi}_\e^{(k)}$ in $L_2(S_\e)$ taking into consideration (\ref{4.110}) and the properties of the operator $\mathcal{R}_\e$ described in Lemma~\ref{lm3.4}:
\begin{align*}
&(\widehat{\psi}_\e^{(k)},\widehat{\psi}_\e^{(p)})_{L_2(S_\e)} = \sum\limits_{i=1}^{m} \g_i^{(k)}(\e) \g_i^{(p)}(\e) +
(\mathcal{R}_\e(\l_k(\e))F_\e^{(k)}, \mathcal{R}_\e(\l_\e^{(p)})F_\e^{(p)})_{L_2(S_\e)},
\\
&\g_\e^{(k)}(\e):=\frac{1}{\l_i(\e)-\widehat{\l}_\e^{(k)}} (F_\e^{(k)},
\psi_\e^{(i)})_{L_2(S_\e)}.
\end{align*}
The identities obtained and (\ref{4.111}), (\ref{4.106}), (\ref{4.109}) yield
\begin{equation}\label{4.115}
\sum\limits_{i=1}^{m} \g_i^{(k)}(\e) \g_i^{(p)}(\e)\to \d_{kp},\quad \e\to+0.
\end{equation}
In particular, as $p=k$  it implies
\begin{equation}\label{4.113}
|\g_i^{(k)}(\e)|\leqslant \frac{3}{2}
\end{equation}
for sufficiently small $\e$. We introduce the matrix $\mathrm{R}_\e:=\big(\g_i^{(k)}(\e)\big)$ and rewrite (\ref{4.115}) as $\mathrm{R}_\e\mathrm{R}_\e^*\to \mathrm{E}$, $\e\to+0$, where $^*$ denotes matrix transposition.
Thus, $|\Det \mathrm{R}_\e|\to1$ as $\e\to+0$. Therefore, for each sufficiently small $\e$ there exists a
permutation $\big(i_1(\e),i_2(\e),\ldots,i_m(\e)\big)$ such that
\begin{equation}\label{4.114}
\left|\prod\limits_{i=1}^{m} \g_{i_k(\e)}^{(k)}(\e)\right|\geqslant \frac{1}{2m!}.
\end{equation}
For a given $\e$ we rearrange the eigenvalues $\l_i(\e)$ and $\widehat{\psi}_\e^{(k)}$ so that $i_k(\e)=k$ that by (\ref{4.113}), (\ref{4.114}) it yields
\begin{equation*}
|\g_i^{(i)}(\e)|\geqslant \frac{2^{m-2}}{3^{m-1}m!}, \quad i=1,\ldots,m.
\end{equation*}
In view of the definition of $\g_k^{(k)}(\e)$, (\ref{4.108}), and the normalization of $\psi_\e^{(i)}$ it follows
\begin{equation*}
|\l_i(\e)-\l_i(\e)|\leqslant \frac{3^{m-1}m!}{2^{m-2}} \big|(F_\e^{(i)},\psi_\e^{(i)})_{L_2(S_\e)}\big|\leqslant C\e^{5\a/2}.
\end{equation*}
Choosing $\a>4/5$, we arrive at the asymptotics (\ref{2.16}).

Denote now
\begin{equation*}
\widetilde{\psi}_\e^{(k)}=\PS_\e(\bs{\psi}_k+\e^2\ln\e\,\bs{\phi}_k)\chi \left(\frac{\tau}{\e^\a}\right)+\left(v_0^{(k)}+\sum\limits_{i=2}^4\e^i v_i^{(k)}\right)\left(1-\chi \left(\frac{\tau}{\e^\a}\right)\right).
\end{equation*}
By direct calculations one can check that
\begin{equation*}
\|\widehat{\psi}_\e^{(k)}-\widetilde{\psi}_\e^{(k)}\|_{\H^1(S_\e)}=\Odr(\e^{\frac{5\a}{2}}).
\end{equation*}
This identity and (\ref{4.111}) imply
\begin{equation*}
\sum\limits_{i=1}^{m}
\g_i^{(k)}(\e)\psi_\e^{(i)}=\psi_\e^{(k)}+\Odr(\e^{\frac{5\a}{2}}),\quad k=1,\ldots,m.
\end{equation*}
Since the right hand sides of these identities are linear independent, the functions $\sum\limits_{i=1}^{m}
\g_i^{(k)}(\e)\psi_\e^{(i)}$ form a basis spanned over the eigenfunctions $\psi_\e^{(i)}$, $i=1,\ldots,m$. Hence, we arrive at
\begin{theorem}\label{th4.7}
Let $\mathcal{P}_\e$ be the total projector associated with the eigenvalues $\l_i(\e)$, $i=1,\ldots,m$, $\widetilde{\mathcal{P}}_\e$ be the projector on the space spanned over $\widetilde{\psi}_\e^{(i)}$, $i=1,\ldots,m$. Then
\begin{equation*}
\mathcal{P}_\e=\widetilde{\mathcal{P}}_\e+\Odr(\e^{2+\rho}),
\end{equation*}
where $\rho$ is any constant in $(0,1/2)$.
\end{theorem}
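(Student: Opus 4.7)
The plan is to combine the representation (\ref{4.110}) with the estimate $\|\widehat{\psi}_\e^{(k)}-\widetilde{\psi}_\e^{(k)}\|_{\H^1(S_\e)}=\Odr(\e^{5\a/2})$ to get two-sided $\H^1(S_\e)$-closeness of the subspaces $V_\e:=\mathrm{span}\{\psi_\e^{(i)}\}_{i=1}^m$ and $\widetilde{V}_\e:=\mathrm{span}\{\widetilde{\psi}_\e^{(i)}\}_{i=1}^m$, and then to convert this subspace closeness into the operator norm bound $\|\mathcal{P}_\e-\widetilde{\mathcal{P}}_\e\|_{L_2(S_\e)\to\H^1(S_\e)}=\Odr(\e^{5\a/2})$. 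Since $\a$ may be chosen in the interval $\big((4+2\rho)/5,1\big)$ --- non-empty precisely because $\rho<1/2$ --- this will yield the stated bound $\Odr(\e^{2+\rho})$.

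First I would combine (\ref{4.110}), (\ref{4.111}) and the $\H^1$-bound on $\widehat{\psi}_\e^{(k)}-\widetilde{\psi}_\e^{(k)}$ to obtain
\begin{equation*}
\widetilde{\psi}_\e^{(k)}=\sum_{i=1}^m \g_i^{(k)}(\e)\,\psi_\e^{(i)}+r_k^{(\e)},\qquad \|r_k^{(\e)}\|_{\H^1(S_\e)}=\Odr(\e^{5\a/2}).
\end{equation*}
The bounds (\ref{4.113})--(\ref{4.114}) on $\mathrm{R}_\e:=(\g_i^{(k)}(\e))$ imply, via Cramer's rule, a uniform (in $\e$) bound on $\mathrm{R}_\e^{-1}$, so the system above can be inverted to give
\begin{equation*}
\psi_\e^{(k)}=\sum_{i=1}^m (\mathrm{R}_\e^{-1})_{ki}\,\widetilde{\psi}_\e^{(i)}+\widetilde{r}_k^{(\e)},\qquad \|\widetilde{r}_k^{(\e)}\|_{\H^1(S_\e)}=\Odr(\e^{5\a/2}).
\end{equation*}
In particular the $\widetilde{\psi}_\e^{(i)}$ are linearly independent for small $\e$, so $\widetilde{\mathcal{P}}_\e$ is well-defined.

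I would then express the projector through the Gram matrix $\widetilde{G}_\e=\big((\widetilde{\psi}_\e^{(i)},\widetilde{\psi}_\e^{(j)})_{L_2(S_\e)}\big)_{i,j}$ as
\begin{equation*}
\widetilde{\mathcal{P}}_\e f=\sum_{k,j=1}^m (\widetilde{G}_\e^{-1})_{kj}\,(f,\widetilde{\psi}_\e^{(j)})_{L_2(S_\e)}\,\widetilde{\psi}_\e^{(k)}.
\end{equation*}
The representation of $\widetilde{\psi}_\e^{(k)}$ above together with the $L_2(S_\e)$-orthonormality of the $\psi_\e^{(i)}$ yield $\widetilde{G}_\e=\mathrm{R}_\e^*\mathrm{R}_\e+\Odr(\e^{5\a/2})$, which by (\ref{4.115}) equals $\mathrm{E}+o(1)$, hence a uniform bound on $\widetilde{G}_\e^{-1}$. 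The decomposition $\mathcal{P}_\e-\widetilde{\mathcal{P}}_\e=(I-\widetilde{\mathcal{P}}_\e)\mathcal{P}_\e-(I-\mathcal{P}_\e)\widetilde{\mathcal{P}}_\e$, combined with the two displays above, reduces the problem to estimating $(I-\widetilde{\mathcal{P}}_\e)\psi_\e^{(k)}$ and $(I-\mathcal{P}_\e)\widetilde{\psi}_\e^{(k)}$ in $\H^1(S_\e)$, both of which are $\Odr(\e^{5\a/2})$ by the subspace closeness just established.

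The main obstacle will be handling the mixed $L_2(S_\e)\to\H^1(S_\e)$ operator norm carefully in this last step, since $\widetilde{\mathcal{P}}_\e$ is constructed from an $L_2$ inner product on the input side but must be controlled in $\H^1$ on the output. This is absorbed by the uniform bounds on $\widetilde{G}_\e^{-1}$ and on $\|\widetilde{\psi}_\e^{(k)}\|_{\H^1(S_\e)}$ (the latter following from (\ref{4.106}) and Lemma~\ref{lm3.3}), which together guarantee that $\widetilde{\mathcal{P}}_\e$ is a uniformly bounded operator $L_2(S_\e)\to\H^1(S_\e)$. All remaining ingredients --- (\ref{4.110})--(\ref{4.115}), the normalization (\ref{4.109}), and the estimate on $\widehat{\psi}_\e^{(k)}-\widetilde{\psi}_\e^{(k)}$ --- have already been established in the preceding part of this section.
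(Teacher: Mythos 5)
Your proposal follows essentially the same route as the paper: the paper's own proof of Theorem~\ref{th4.7} stops after observing that $\sum_{i=1}^m\g_i^{(k)}(\e)\psi_\e^{(i)}$ differs from $\widetilde{\psi}_\e^{(k)}$ by $\Odr(\e^{5\a/2})$ in $\H^1(S_\e)$ and that these combinations span the eigenspace, and then simply asserts the theorem; you supply exactly the omitted conversion of basis closeness into projector closeness (Gram-matrix representation of $\widetilde{\mathcal{P}}_\e$, uniform invertibility of $\mathrm{R}_\e$, uniform $L_2(S_\e)\to\H^1(S_\e)$ boundedness of the projectors, and the choice $\a\in\big((4+2\rho)/5,1\big)$), using only ingredients already established in (\ref{4.106})--(\ref{4.115}). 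Two points need repair, both minor. First, your decomposition is not an identity: $(I-\widetilde{\mathcal{P}}_\e)\mathcal{P}_\e-(I-\mathcal{P}_\e)\widetilde{\mathcal{P}}_\e=\mathcal{P}_\e-\widetilde{\mathcal{P}}_\e+\big(\mathcal{P}_\e\widetilde{\mathcal{P}}_\e-\widetilde{\mathcal{P}}_\e\mathcal{P}_\e\big)$, and the commutator does not vanish in general. Use instead $\mathcal{P}_\e-\widetilde{\mathcal{P}}_\e=(I-\widetilde{\mathcal{P}}_\e)\mathcal{P}_\e-\widetilde{\mathcal{P}}_\e(I-\mathcal{P}_\e)$; the second term is handled through the self-adjointness of the spectral projector $\mathcal{P}_\e$ in $L_2(S_\e)$, since $\big((I-\mathcal{P}_\e)f,\widetilde{\psi}_\e^{(j)}\big)_{L_2(S_\e)}=\big(f,(I-\mathcal{P}_\e)\widetilde{\psi}_\e^{(j)}\big)_{L_2(S_\e)}$ and $\|(I-\mathcal{P}_\e)\widetilde{\psi}_\e^{(j)}\|_{L_2(S_\e)}=\Odr(\e^{5\a/2})$ by your expansion $\widetilde{\psi}_\e^{(j)}=\sum_i\g_i^{(j)}\psi_\e^{(i)}+r_j^{(\e)}$; after that your uniform bounds on $\widetilde{G}_\e^{-1}$ and $\|\widetilde{\psi}_\e^{(k)}\|_{\H^1(S_\e)}$ close the argument exactly as you describe. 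Second, the uniform bound on $\mathrm{R}_\e^{-1}$ does not follow from (\ref{4.113})--(\ref{4.114}) via Cramer's rule: those inequalities control individual entries and a product of selected entries, not the determinant. The clean source is (\ref{4.115}), i.e.\ $\mathrm{R}_\e\mathrm{R}_\e^*\to\mathrm{E}$ (already used by the paper to get $|\Det\mathrm{R}_\e|\to1$), which gives $\mathrm{R}_\e^{-1}=\mathrm{R}_\e^*(\mathrm{R}_\e\mathrm{R}_\e^*)^{-1}$ bounded uniformly in $\e$ --- a fact you in any case invoke when inverting the Gram matrix. With these two adjustments your argument is complete and coincides in substance with the proof the paper intends.
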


\section*{Acknowledgments}{Both authors were partially supported by FCT's projects PTDC/\ MAT/\ 101007/2008
and PEst-OE/MAT/UI0208/2011. D.B. was partially supported by RFBR  and by Federal Task Program (contract 02.740.11.0612).
}

\end{document}